\theoremstyle{plain}
\newtheorem{Theorem}{Theorem}[section]
\newtheorem{Lemma}{Lemma}[section]
\newtheorem{Proposition}{Proposition}[section]
\newtheorem{Corollary}{Corollary}[section]
\newtheorem{Definition}{Definition}[section]
\theoremstyle{remark}
\newtheorem{remark}{Remark}
\numberwithin{equation}{section}
\numberwithin{figure}{section}
\numberwithin{remark}{section}
\begin{document}
\begin{sloppypar}

	\title{Regularity of solutions for degenerate or singular fully nonlinear integro-differential equations}
	
	\author{Jiangwen Wang}
	\address{School of Mathematics and Shing-Tung Yau Center of Southeast University, Southeast University, Nanjing 211189, P.R. China}
	\email{jiangwen\_wang@seu.edu.cn}	

	\author{Feida Jiang$^*$}
	\address{School of Mathematics and Shing-Tung Yau Center of Southeast University, Southeast University, Nanjing 211189, P.R. China; Shanghai Institute for Mathematics and Interdisciplinary Sciences, Shanghai 200433, P.R. China}
	\email{jiangfeida@seu.edu.cn}	
	
	\subjclass[2010]{35D40, 35R11, 35J70, 35J75}

	\date{\today}
	\thanks{*corresponding author}
	
		\keywords{Integro-differential operator; degenerate/singular equation; viscosity solutions; higher regularity}

	\begin{abstract}
	We study a series of regularity results for solutions to a                                                                      degenerate or singular fully nonlinear integro-differential equation of the form
			$$- \big( \sigma_{1}(|Du|)  + a(x) \sigma_{2}(|Du|)    \big) \mathcal{I}_{\tau}(u,x) = f(x).$$
			In the degenerate case, we establish borderline regularity,  provided the inverse of the degeneracy law $ \sigma_{2}$ is Dini-continuous. In addition, we show Schauder-type higher regularity at local extremum points for a specific non-local degenerate equation. In the singular case, we establish H\"{o}lder continuity of the gradient for solutions to a general non-local equation.

It is noteworthy that these results are new even in the case $ a(x) \equiv 0 $. Finally, as a byproduct of the borderline regularity analysis, we demonstrate how our methods can be applied to study of the corresponding regularity for a class of degenerate non-local normalized $ p$-Laplacian equations.
\end{abstract}

	\maketitle		
\tableofcontents

\section{Introduction}
Integro-differential operators are often referred to as non-local operators in the literature. This class of problems arises in various mathematical modeling contexts, such as image processing and financial payoff models; see, for instance \cite{GVR16, CE16, EGE12} and the references therein. For example, linear non-local operators naturally emerge in the study of discontinuous stochastic processes, while nonlinear integro-differential equations are prevalent in the analysis of stochastic differential games and optimal stopping problems, see \cite{XX24}.

Integro-differential operators have been extensively studied over the years by many authors. In \cite{LL09}, Caffarelli and Silvestre established a series profound results extending the theory of second-order operators, including the  Aleksandrov-Bakelman-Pucci (ABP) estimate, $ C^{\alpha} $ regularity, $ C^{1,\alpha} $ estimates, and more. Subsequently, in \cite{LL11}, they extended their previous results to non-translation-invariant equations using perturbative techniques. Around the same time, Barles {\it et.al} \cite{BCI11} obtained H\"{o}lder estimates via the Ishii-Lions viscosity method for a broad class of elliptic and parabolic integro-differential equations that incorporate both second-and first-order terms. For results on Lipschitz regularity within a general framework, we refer the interested reader to \cite{BCCI12}.

  \vspace{2mm}

In particular, in this paper we consider a series of regularity results for solutions to the double-phase degenerate or singular fully nonlinear integro-differential equation of the form
\begin{equation}\label{Intro:eq1}
- \bigg( \sigma_{1}(|Du|)  + a(x) \sigma_{2}(|Du|)    \bigg) \mathcal{I}_{\tau}(u,x) = f(x) \ \ \text{in} \ \ B_{1},
\end{equation}
where $ \sigma_{i} : \mathbb{R}_{+} \rightarrow \mathbb{R}_{+}, i=1,2 $, are degeneracy rates, $B_1$ is the unit ball in $\mathbb{R}^n$, and $ \mathcal{I}_{\tau}  $ is a fully nonlinear elliptic integro-differential operator
\begin{equation}\label{I_tau}
 \mathcal{I}_{\tau}(u,x)  = \inf_{i} \sup_{j} \left[P.V. \int_{\mathbb{R}^{n}} \left( u(y) - u(x)   \right) K_{ij}(x-y) dy\right], 
\end{equation}
with a two-parameter family of symmetric kernels $ \{K_{ij} \}\subseteq \mathcal{K}_{0}$ for $i\in \mathcal{A}$ and $j\in \mathcal{B}$, and $P.V.$ stands for the Cauchy principal value.
Here, $\mathcal{A}, \mathcal{B}$ are arbitrary index sets, and $\mathcal{K}_0$ denotes the family of symmetric kernels $ K : \mathbb{R}^{n} \setminus \{0\} \rightarrow \mathbb{R}_{+}  $ satisfying
\begin{equation}\label{bd for K}
 \lambda \frac{C_{n,\tau}}{|x|^{n+\tau}} \leq K(x) \leq \Lambda \frac{C_{n,\tau}}{|x|^{n+\tau}}, 
\end{equation}
for the order $\tau$ and certain constants $ 0< \lambda \leq \Lambda < \infty $ and $ C_{n,\tau} > 0 $ is a constant. Note that \eqref{I_tau} is well defined for function $ u \in C^{1,1}(B_{\delta}(x)) \cap L^{1}_{\tau}(\mathbb{R}^{n}) $ for some $ \delta >0 $, where
\begin{equation*}
||u||_{L^{1}_{\tau}(\mathbb{R}^{n})} := \int_{\mathbb{R}^{n}} |u(x)| \frac{1}{1+|x|^{n+\tau}} dx <+\infty .
\end{equation*}

We first present a list of appropriate assumptions on $\tau, u, f, a$ and $\sigma_i$, $ i=1, 2 $.

\vspace{2mm}

\label{A1}  {\bf (A1) (order of the operator).} $ \tau \in  (1, 2)$.

\vspace{2mm}

\label{A2} {\bf (A2) (growth condition at infinity).} $u \in L_{\tau}^{1}(\mathbb{R}^{n})$.

\vspace{2mm}

\label{A3} {\bf (A3) (regularity of source term).} $ f \in L^{\infty}(B_{1}) \cap C^{0}(B_{1})   $.

\vspace{2mm}

\label{A4} {\bf (A4) (nonnegativity and regularity of $ a(x)$).} $ 0 \leq a(x) \in C^{0}(B_{1}) $.

\vspace{2mm}

\label{A5} {\bf (A5) (convergence of the operator).} Let $ \{K_{ij}\} $ be a collection of kernels of in $ \mathcal{K}_{0} $. There exists a modulus of continuity $ \omega $ and $ \{k_{ij}\} \in [\lambda, \Lambda] $ satisfying
\begin{equation*}
|K_{ij}(x)|x|^{n+\tau} - k_{ij} | \leq \omega(|x|) \ \text{for all} \ i,j \ \text{and} \ |x|\leq1 . 
\end{equation*}

\vspace{2mm}

\label{A6} {\bf (A6) (monotonicity of $ \sigma_{i}, i=1,2 $).}

-\label{A6a} {\bf (A6a) (degenerate case).} $ \sigma_{i}: \mathbb{R}_{+} \rightarrow \mathbb{R}_{+}, i=1,2$, are continuous and monotone increasing with
\begin{equation*}
 \lim_{t \rightarrow 0} \sigma_{i}(t)=0, \ \ i=1,2.
\end{equation*}

-\label{A6b} {\bf (A6b) (singular case).} $ \sigma_{i}: \mathbb{R}_{+} \rightarrow \mathbb{R}_{+}, i=1,2$, are continuous and monotone decreasing with
\begin{equation*}\label{intro:eq4}
\lim_{t \rightarrow 0} \sigma_{i}(t) = c_{0} > 0, \ \ i=1,2.
\end{equation*}

Furthermore, in both cases, we assume
\begin{equation*}
 \sigma_{1}(t) \geq \sigma_{2}(t), \ \ t \in [0,1],
\end{equation*}
and, in particular,
\begin{equation*}
  \sigma_{1}(1)  \geq \sigma_{2}(1) \geq 1 .
\end{equation*}

\vspace{2mm}

\label{A7} {\bf (A7) (Dini continuity of the inverse of $\sigma_2$).} The function $ \sigma_{2} $ admits an inverse $ \sigma_{2}^{-1} $ that is Dini continuous, i.e.,
\begin{equation*}
  \int_{0}^{1} \frac{\sigma_{2} ^{-1}(t)}{t} dt < \infty.
\end{equation*}

\label{A8} {\bf (A8) (Non-collapsing of $ \sigma_{1} + a(\cdot)\sigma_{2} $).} $ \sigma_{1} + a(\cdot)\sigma_{2} \in \Sigma $, where $ \Sigma $ is a set of non-collapsing moduli of continuity.

Note that as in \cite[Denfintion 3]{PDEE22}, a set $ \Sigma $ of moduli of continuity is said to be non-collapsing if for all sequences $ (f_{k})_{k\in \mathbb{N}} \subset \Sigma  $, and all sequences of $ (a_{k})_{k\in \mathbb{N}} \subset (0,\infty)  $, we have
\begin{equation*}
 \lim_{k\rightarrow +\infty} f_{k}(a_{k}) = 0 \ \  \Rightarrow  \ \
 \lim_{k\rightarrow +\infty} a_{k} = 0.
\end{equation*}

\vspace{2mm}

Over the past decades, $ C^{1,\alpha} $ regularity of solutions to degenerate or singular elliptic equations has been the subject of intensive study in regularity theory, with one of the most widely studied prototypes given by
\begin{equation}\label{Intro:eq1p}
  |Du|^{\gamma} F(D^{2}u) = f(x) \ \ \text{in} \ \ B_{1},
\end{equation}
where $ \gamma > -1 $ and $ F $ is a uniformly elliptic operator; see \cite{ART15, CL13, AEGE21, YRZ21, C21, SSKS24}. It is noteworthy that the regularity theory for viscosity solutions to (\ref{Intro:eq1p}) plays a crucial role in the study of various free boundary problems, such as those of obstacle type \cite{1DV21, 2DV21}, the one-phase Bernoulli type \cite{DRRV23}, singular perturbation problems \cite{ART17}, and the free transmission problem \cite{HPRS24, C22, WYJ25}.

In a recent paper \cite{PDEE22}, the authors considered a degenerate fully nonlinear equation of the form
\begin{equation}\label{Intro:eq2}
  \sigma(|Du|) F(D^{2}u) = f(x) \ \ \text{in} \ \ B_{1},
\end{equation}
and established local $ C^{1} $ (borderline) regularity for viscosity solutions of (\ref{Intro:eq2}) under the assumption that $ \sigma : \mathbb{R}_{+} \rightarrow \mathbb{R}_{+} $ is a function whose inverse has a Dini-continuous modulus of continuity near the origin.

In the non-local setting, the first $C^{1, \alpha}$ regularity result was obtained by Prazeres and Topp in \cite{DE21}. They considered anon-local degenerate equation of the form
\begin{equation}\label{Intro:eq3}
-|Du|^{\gamma} \mathcal{I}_{\tau}(u,x) = f(x) \ \ \text{in} \ \ B_{1},
\end{equation}
where $ \gamma > 0 $ and the order $ \tau $ is sufficiently close to $ 2 $. They proved that viscosity solutions to (\ref{Intro:eq3}) belong to the class $ C^{1,\alpha} $. Their assumptions on $ \tau $ allowed them to utilize the following canceling property:
\begin{equation*}
|Du|^{\gamma} F(D^{2}u) = 0  \ \ \Rightarrow \ \ F(D^{2}u) = 0.
\end{equation*}
As observed in \cite[Section 3]{DE21}, this property plays a crucial role in obtaining interior regularity estimates for solutions of (\ref{Intro:eq3}).

It is also worth mentioning that in the recent work \cite{PDM24}, the authors examined $ C^{1,\alpha} $ regularity of solutions to fully nonlinear non-local equations with double degeneracy or singularity of the form
\begin{equation*}
-\big( |Du|^{\gamma_{1}} +a(x)|Du|^{\gamma_{2}} \big)\mathcal{I}_{\tau}(u,x) = f(x) \ \ \text{in} \ \ B_{1},
\end{equation*}
where $ -1 < \gamma_{1} \leq \gamma_{2} < \infty  $ and $ a(x) \in C^{0}(B_{1}) $ with $ a(x) \geq 0 $. In the present paper, we naturally consider $ C^{1} $ regularity of solutions to the fully nonlinear non-local equation \eqref{Intro:eq1} with double degeneracy. To the best of our knowledge, few results were addressed on the borderline regularity of solutions to non-local equations. We now state our first main result, which establishes the borderline regularity of solutions to equation \eqref{Intro:eq1} in the degenerate case.

\begin{Theorem}\label{thm1}(borderline regularity in the degenerate case)
Let $ u \in C^{0}(\overline{B}_{1}) $ be a bounded viscosity solution to \eqref{Intro:eq1}, assume \hyperref[A1]{\bf (A1)}--\hyperref[A5]{\bf (A5)}, \hyperref[A6a]{\bf (A6a)} and \hyperref[A7]{\bf (A7)} hold. Then there exists $ \tau_{0} \in (1,2) $ sufficiently close to $ 2 $ such that if $ \tau \in (\tau_{0},2) $, then $ u \in C^{1}_{loc}(B_{1})$. Moreover, there exists a modulus of continuity $ \omega : \mathbb{R}_{0}^{+} \rightarrow  \mathbb{R}_{0}^{+}     $ depending only on $ n, \tau, \sigma_{1}, \sigma_{2}, ||u||_{L^{\infty}(B_{1})}, ||f||_{L^{\infty}(B_{1})} $ such that
\begin{equation*}
 |Du(x)-Du(y)| \leq \omega (|x-y|),
\end{equation*}
for every $ x,y \in B_{1/4} $. In particular, if $ \sigma_{i}^{-1}(i=1,2) $ behave like H\"{o}lder continuous functions near the origin, then the solutions are locally $ C^{1,\alpha} $, for some $ 0 < \alpha <1 $.
\end{Theorem}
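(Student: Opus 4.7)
The plan is to adapt the iterative affine-approximation scheme of \cite{PDEE22} to the non-local setting developed in \cite{DE21, PDM24}, exploiting the fact that the Dini continuity of $\sigma_{2}^{-1}$ converts the vanishing degeneracy into a summable modulus of continuity for $Du$. I would first apply the Caffarelli-Silvestre H\"{o}lder estimate from \cite{LL09} (after dividing \eqref{Intro:eq1} by the positive factor $\sigma_{1}(|Du|)+a(x)\sigma_{2}(|Du|)$ and viewing it in the Pucci framework) to obtain $u\in C^{\alpha}$ for some $\alpha>0$. Standard normalization then reduces matters to the case $\|u\|_{L^{\infty}(\mathbb{R}^{n})}\leq 1$ and $\|f\|_{L^{\infty}(B_{1})}\leq\varepsilon_{0}$ for any prescribed $\varepsilon_{0}>0$, and I would fix $\tau_{0}\in(1,2)$ so close to $2$ that the interior regularity exponent $\bar{\alpha}$ of the associated translation-invariant homogeneous operator satisfies $1+\bar{\alpha}>\tau$, which is the non-local analogue of the cancellation property used in \cite[Section 3]{DE21}.

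Second, I would establish an approximation lemma: any viscosity solution of \eqref{Intro:eq1} is $\varepsilon$-close in $L^{\infty}(B_{1/2})$ to a solution $h$ of a translation-invariant homogeneous non-local equation $\tilde{\mathcal{I}}_{\tau}(h,\cdot)=0$ with kernels in $\mathcal{K}_{0}$. This is obtained by a compactness-and-contradiction argument where \hyperref[A5]{\bf (A5)} provides the kernel convergence, \hyperref[A2]{\bf (A2)} controls the tails at infinity, and the non-collapsing hypothesis \hyperref[A8]{\bf (A8)} is used precisely to prevent $\sigma_{1}(|Du|)+a(x)\sigma_{2}(|Du|)$ from vanishing along the approximating sequence (so that the limiting equation is not lost). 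By the Caffarelli-Silvestre $C^{1,\bar{\alpha}}$ estimates, $h$ admits a first-order Taylor polynomial $L$ with $\sup_{B_{\rho}}|h-L|\leq C\rho^{1+\bar{\alpha}}$ for small $\rho$.

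The core of the argument is then an iterative scheme. Fixing $x_{0}\in B_{1/4}$, I would construct inductively a sequence of affine functions $L_{k}(x)=a_{k}+b_{k}\cdot(x-x_{0})$ and a geometric sequence of radii $\rho^{k}$ such that $\sup_{B_{\rho^{k}}(x_{0})}|u-L_{k}|\leq\rho^{k(1+\alpha)}$. The rescaled functions
$$u_{k}(x)=\frac{(u-L_{k})(x_{0}+\rho^{k}x)}{\rho^{k(1+\alpha)}}$$
again satisfy a non-local equation of the form \eqref{Intro:eq1}, with modified degeneracy profiles and a new right-hand side of size $\|f\|_{\infty}/\bigl(\sigma_{1}(|b_{k}|)+a(x_{0})\sigma_{2}(|b_{k}|)\bigr)$ up to a scaling factor in $\tau$. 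At each step two alternatives occur: if $|b_{k}|$ stays bounded below, the equation becomes genuinely uniformly elliptic non-local, and the approximation lemma produces $L_{k+1}$ with $|b_{k+1}-b_{k}|\lesssim\rho^{k\bar{\alpha}}$; if $|b_{k}|$ is small, then the smallness of the rescaled source forces precisely the quantitative bound $|b_{k}|\geq\sigma_{2}^{-1}(C\rho^{k\beta})$ for an appropriate exponent $\beta$. The Dini hypothesis \hyperref[A7]{\bf (A7)} is exactly the condition that makes the resulting gradient increments summable, giving a modulus of continuity of the shape $\omega(r)\lesssim\sum_{\rho^{k}\geq r}\sigma_{2}^{-1}(C\rho^{k\beta})\lesssim\int_{0}^{r}\sigma_{2}^{-1}(t)/t\,dt$; when $\sigma_{i}^{-1}$ are H\"{o}lder near $0$ the sum collapses to a geometric series and one recovers $C^{1,\alpha}$.

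The main obstacle is the interaction between the rescaling and the double-phase coefficient $\sigma_{1}+a(\cdot)\sigma_{2}$: the non-local tails pick up contributions from every scale, so $u_{k}$ does not a priori satisfy an equation with the same ellipticity constants, and one must choose $\alpha<\bar{\alpha}$ and $\rho$ small enough to absorb these tail terms, which is what ultimately forces $\tau>\tau_{0}$ close to $2$. The delicate point is to verify that, in the small-gradient regime, both profiles $\sigma_{1}$ and $\sigma_{2}$ can be handled simultaneously—using $\sigma_{1}\geq\sigma_{2}$ and $a\geq 0$ to identify $\sigma_{2}$ as the dominant degeneracy—and that the non-collapsing property \hyperref[A8]{\bf (A8)} is preserved under the induction so that the approximation lemma can be reapplied uniformly at each step of the scheme.
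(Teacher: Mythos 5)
Your overall skeleton (compactness, approximation lemma, iteration by affine approximation, Dini summability) is the right one, but there are two genuine gaps and one deviation from the paper worth flagging.

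First, the opening step of dividing \eqref{Intro:eq1} by $\sigma_{1}(|Du|)+a(x)\sigma_{2}(|Du|)$ to land in the Pucci framework is not valid: under \hyperref[A6a]{\bf (A6a)} this factor is only nonnegative and vanishes precisely where $|Du|=0$, which is the degeneracy one is trying to handle. The paper does \emph{not} divide; it instead proves Lipschitz compactness for the perturbed equation with gradient shift $\xi$ via a doubling-of-variables argument using the non-local Jensen--Ishii lemma (Proposition~\ref{Prop3.1} together with Lemma~\ref{lem2.3}), precisely because the degeneracy factor cannot be inverted.

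Second, and this is the core omission, your iteration aims for the fixed geometric rate $\sup_{B_{\rho^{k}}}|u-L_{k}|\leq\rho^{k(1+\alpha)}$ with a fixed $\alpha>0$, which is the standard $C^{1,\alpha}$ scheme. That scheme cannot close in the borderline case: the degeneracy makes the per-step improvement factor non-uniform, and trying to force a geometric rate is exactly why the naive argument breaks. The paper instead iterates with $\sup_{B_{r^{k}}}|u-l_{k}|\leq(\prod_{i=1}^{k}\mu_{i})r^{k}$ where the $\mu_{k}\in(\mu_{k-1},1)$ are chosen \emph{adaptively}: either $\mu_{k}=\mu_{k-1}$ or $\mu_{k}$ is tuned so that the rescaled coefficient $\sigma_{1}^{k}$ in \eqref{loc:eq7} satisfies the normalization $\frac{\prod\mu_{i}}{r^{k(\tau-1)}}\sigma_{1}(\prod\mu_{i}\,c_{k})=1$. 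That choice simultaneously (i) keeps the family $\{\sigma_{1}^{k}\}$ \emph{shored-up}, hence non-collapsing by Lemma~\ref{Au:lem23}, so Proposition~\ref{prop:3.2} can be re-applied at every scale, and (ii) yields $\prod_{i=1}^{k}\mu_{i}\leq\sigma_{1}^{-1}(\theta^{k})/c_{k}$, whose summability is exactly what Lemma~\ref{Au:prop1} delivers from the Dini hypothesis \hyperref[A7]{\bf (A7)}. You gesture at this point when you say the non-collapsing property ``is preserved under the induction'' and that the increments should behave like $\sigma_{2}^{-1}(C\rho^{k\beta})$, but note that \hyperref[A8]{\bf (A8)} is \emph{not} a hypothesis of Theorem~\ref{thm1}: one must \emph{construct} a non-collapsing family out of \hyperref[A6a]{\bf (A6a)} and \hyperref[A7]{\bf (A7)}, and the $\mu_{k}$/$c_{k}$ machinery is precisely that construction. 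Your proposal leaves this as something to ``verify,'' which is where the proof would stall.

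Finally, a smaller divergence: your approximation lemma lands on a homogeneous non-local equation $\tilde{\mathcal{I}}_{\tau}(h,\cdot)=0$ with Caffarelli--Silvestre $C^{1,\bar\alpha}$ estimates, whereas the paper sends $\tau\to2^{-}$ under \hyperref[A5]{\bf (A5)} so the limit is the \emph{second-order} uniformly elliptic equation $\mathcal{I}_{\infty}(D^{2}u_{\infty})=0$, and then invokes Krylov--Safonov (Proposition~\ref{prop:3.2}, Step~5). Your route could in principle work if the non-local interior exponent is uniform in $\tau$ near $2$, but the paper's passage to the local limit is cleaner and is what the hypothesis ``$\tau_{0}$ close to $2$'' is really for.
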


We note that it suffices to require $ \sigma_{2}^{-1} $ to be a  Dini-continuous modulus of continuity, without similar assumption on $ \sigma_{1}^{-1} $. Indeed, $ \sigma_{1}^{-1} $ is automatically Dini-continuous since $ \sigma_{2}^{-1} $ is Dini-continuous and $ \sigma_{1} \geq \sigma_{2} $. It is worth noting that the result of Theorem \ref{thm1} embraces the main results of \cite{ART15, CL13, C21, DE21, PDM24, PDEE22, DR20} for fully nonlinear local/non-local equations with degenerate terms in a unified way, see Table \ref{Table1}, for the details.

\begin{table}[!htp]\label{Table1}
   \centering
   {
    \linespread{1.5}      \selectfont
   \begin{tabular}{||c|c|c||}
        \hline
        Model PDEs & Regularity of $ u $ & References  \\
        \hline
        \multirow{2}{*}{$ |Du|^{\gamma} F(D^{2}u) =f $} & $ C^{1,\alpha}_{loc}, \ 0< \alpha \leq \frac{1}{1+\gamma}$ &  \cite[Theorem 1]{CL13}  \\
        \cline{2-3}
         & $ C^{1,\alpha}_{loc}, \ \alpha = \min \big\{ \alpha_{0}^{-}, \frac{1}{1+\gamma}   \big\}$ &  \cite[Theorem 3.1]{ART15}  \\
        \hline
        \multirow{2}{*}{$ \big(|Du|^{\gamma_{1}} + a(x) |Du|^{\gamma_{2}} \big)F(D^{2}u) =f $} & $ C^{1,\alpha}_{loc}, \ 0< \alpha \leq \frac{1}{1+\gamma_{1}}$ & \cite[Theorem 1]{C21} \\
        \cline{2-3}
        &  $ C^{1,\alpha}_{loc}, \ \alpha = \min \big\{ \alpha_{0}^{-}, \frac{1}{1+\gamma_{1}}   \big\}$ & \cite[Theorem 1.1]{DR20}    \\
        \hline
         $ \frac{\varphi(|Du|)}{|Du|} F(D^{2}u) = f $ & $ C^{1}_{loc} $ &  \cite[Theorem 1.1]{BBO23}  \\
        \hline
        $ \Phi(x,|Du|) F(D^{2}u) = f $ &  $ C^{1,\alpha}_{loc}, \alpha = \min \big \{\alpha_{0}^{-},\frac{1}{1+s(\Phi)}    \big\} $  &  \cite[Theorem 1.1]{SSKS24}   \\
        \hline
        $ F(D^{2}u) + b(x)|\nabla u|^{\beta} = f $ & $ u \in C^{1,\alpha}_{loc},\alpha \in (0,1) $  &  \cite[Theorem 1.1]{BD15}    \\
        \hline
          $ |\nabla u|^{\gamma} F(D^{2}u) + b(x)|\nabla u|^{\beta} = f  $  &   $ u \in C^{1,\alpha}_{loc}, \alpha \in (0,1) $  &    \cite[Theorem 1.1]{BDL19}   \\
        \hline
        $ \sigma(|Du|) F(D^{2}u) =f $ & $ C^{1}_{loc} $  &  \cite[Theorem 1]{PDEE22}     \\
         \hline
                $
  \begin{cases}
  |Du|^{\gamma}(F(D^{2}u) + h(x) \cdot \nabla u  ) = f \ \   \text{in} \ \ B_{1}  \\
  u = \varphi  \qquad \qquad \qquad  \qquad \qquad \qquad \text{on} \ \ \partial B_{1}         \\
  \end{cases} $  &   $ u \in C^{1,\alpha}(\overline{B}_{1}), \alpha \in (0,1)  $          & 
  \cite[Theorem 1.1]{BD14}    \\
       \hline
        $ -|Du|^{\gamma} \mathcal{I}_{\tau}(u,x) = f $  & $ C^{1,\alpha}_{loc}, \ \alpha = \min \big\{ \alpha_{0}^{-}, \frac{\tau-1}{1+\gamma}   \big\} $ &  \cite[Theorem 1.2]{DE21}  \\
         \hline
         $ -\big(|Du|^{\gamma_{1}} + a(x) |Du|^{\gamma_{2}} \big)\mathcal{I}_{\tau}(u,x) =f $ & $ C^{1,\alpha}_{loc}, \ \alpha = \min \big\{ \alpha_{0}^{-}, \frac{\tau-1}{1+\gamma_{1}}   \big\} $  & \cite[Theorem 1.2]{PDM24} \\
         \hline
         $ - \big( \sigma_{1}(|Du|)  + a(x) \sigma_{2}(|Du|)    \big) \mathcal{I}_{\tau}(u,x) = f $  &  $ C^{1}_{loc} $  &  Theorem \ref{thm1}  \\
         \hline
   \end{tabular}
   }
\caption{Comparing Theorem \ref{thm1} with previous regularity results for related degenerate equations \cite{ART15, CL13, C21, DE21, PDM24, PDEE22, DR20}. Here $ \alpha_{0} $ is the optimal H\"{o}lder exponent for solutions to $ F$ harmonic function, $ \varphi $ is an increasing continuous function and the inverse of $ \varphi $ is Dini-continuous, please refer to \cite[(A2)]{BBO23} for details. In addition, please refer to \cite[(A2)]{SSKS24} for the conditions that $ \Phi $ and $ s(\Phi)$ satisfy.} 
\end{table}

We illustrate the strategy, which will be used in the proof of Theorem   \ref{thm1}. Note that obtaining $ C^{1} $ regularity of solutions for (\ref{Intro:eq1}) is showing that the graph of function $ u $ can be approximated by an affine function with an error bounded by $ Cr$ in any ball of radius $ r $. For this purpose, we first show that a viscosity solution $ u $ of the equation
\begin{equation}\label{Intro:eq3p}
- \bigg( \sigma_{1}(|Du+\xi|)  + a(x) \sigma_{2}(|Du+\xi|)  \bigg) \mathcal{I}_{\tau}(u,x) = f(x) \ \ \text{in} \ \ B_{1}, \ \ \ \xi \in \mathbb{R}^{n} \ \ \text{is any vector},
\end{equation}
is locally Lipschitz continuous, by using a doubling variable method and Lemma \ref{lem2.3}(see section \ref{Section 2}) for large/small scopes. Then, in the process of deriving an approximation lemma, concerning (\ref{Intro:eq1}) and (\ref{Intro:eq3p}) with the solutions to the homogeneous, uniformly elliptic problem $ \mathcal{I}_{\infty} = 0 $, the main difficulty is that the degenerate term $ \sigma_{1} + a(\cdot)\sigma_{2} $ lacks of {\it non-collapsing} property. To handle such a challenge, we combine the idea from \cite{PDEE22} and the recent work of \cite{PDM24} to establish a new {\it shored up}-continuous module sequence $ (\sigma_{1}^{n})_{n\in \mathbb{N}} $. This information is an important step towards the proof of Theorem \ref{thm1} based on iteration and convergence analysis, see Section \ref{subsec:3.3}.

\begin{remark}\label{intro:rk1}
Theorem \ref{thm1} is also extended to multi-phase degenerate non-local equation, i.e.,
\begin{equation*}
  - \bigg\{ \sigma_{0}(|Du|) + \sum_{i=1}^{k} a_{i}(x)\sigma_{i}(|Du|)                   \bigg\} \mathcal{I}_{\tau}(u,x) = f(x) \ \ \text{in} \ \ B_{1},
\end{equation*}
where additional conditions need to be imposed:
\begin{equation*}
  \sigma_{0}(t) \geq \sigma_{1}(t) \geq \cdots \geq \sigma_{k}(t), \ \forall \  t\in [0,1], \ \      \sigma_{0}(1) \geq \sigma_{1}(1) \geq \cdots \geq \sigma_{k}(1) \geq 1,
\end{equation*}
\begin{equation*}
  \lim_{t\rightarrow 0} \sigma_{i}(t) = 0, \ \ i= 0,1,\cdots, k, \ 0 \leq a_{j}(x) \in C^{0}(B_{1}), \  j=1,\cdots, k
\end{equation*}
and $ \sigma_{k}^{-1} $ satisfies Dini condition.
\end{remark}
\begin{remark}\label{intro:rk2}
The proof of Theorem \ref{thm1} is performed by approximation to the second-order equation associated to \eqref{Intro:eq1} as $ \tau \rightarrow 2^{-} $. In effect, by assumption \hyperref[A5]{\bf (A5)} and $ \tau \rightarrow 2^{-} $, this limiting problem takes the form
\begin{equation*}
  - \big( \sigma_{1}(|Du|)  + a(x) \sigma_{2}(|Du|)    \big) \inf_{i} \sup_{j} k_{ij} \text{Tr}(D^{2}u) = 0 \ \ \text{in} \ \ B_{1}.
\end{equation*}
The cancellation property of the viscosity solutions yields the simplified equation:
\begin{equation*}
  \inf_{i} \sup_{j} k_{ij} \text{Tr}(D^{2}u) = 0 \ \ \text{in} \ \ B_{1},
\end{equation*}
which serves as the cornerstone of our analysis. This limiting behavior crucially informs the proof strategy, with the complete technical details being developed in Proposition \ref{prop:3.2}.
\end{remark}

As we have described above, in the ground-breaking work \cite{CL13}, Imbert and Silvestre proved that the solutions to (\ref{Intro:eq1p}) with $ f \in L^{\infty}(B_{1}) $, are $ C^{1,\alpha}_{loc}$, for some $ 0< \alpha \leq \frac{1}{1+\gamma} $. This regularity is optimal, even if the source term $ f $ is H\"{o}lder continuous and $ F = \Delta $. In effect, we recall an important example from \cite{CL13}. For any $ \alpha \in (0,1) $, the function $ u(x)= |x|^{1+\alpha} $ satisfies
\begin{equation*}
  |Du|^{\gamma}\Delta u = C(n,\alpha)|x|^{\theta} \ \ \text{in} \ \ B_{1},
\end{equation*}
where $ \theta = (1+\alpha)(1+\gamma)- (2+\gamma) $. One can see that $ \theta \leq \gamma $. Furthermore, $ u \in C^{1, \frac{1+\theta}{1+\gamma}}$ at the origin ($ 0 $ is a critical point of $ u$). However, observing that for any given $ \theta > \gamma $, the function $ u(x) = |x_{n}|^{2+\frac{\theta-\gamma}{1+\gamma}} $ satisfies
\begin{equation*}
  |Du|^{\gamma}\Delta u = f(x) \ \ \text{in} \ \ B_{1},
\end{equation*}
where
\begin{equation*}
  f(x) = C(n,\theta)|x_{n}|^{\gamma+\frac{\gamma(\theta-\gamma)}{1+\gamma}+\frac{\theta-\gamma}{1+\gamma}} = C(n,\theta) |x_{n}|^{\theta} \leq C(n,\theta) |x|^{\theta}.
\end{equation*}
It can easily be seen that $ u $ is exactly $ C^{2, \frac{\theta-\gamma}{1+\gamma}} $ at the origin.

These examples suggest that the H\"{o}lder continuity of $ f $ has a direct impact on the regularity estimate of solutions to (\ref{Intro:eq1p}), and we can obtain higher regularity at certain meaningful points of solution $ u $ for (\ref{Intro:eq1p}) if $ f $ vanishes faster than degenerate law $ \gamma $. In a very recent paper \cite{T24}, Nascimento strictly proved Schauder-type regularity at interior critical points of solutions for (\ref{Intro:eq1p}). Similar results for fully nonlinear elliptic equation with general source term have also been discussed, and the reader is referred to \cite{TGE24}. Complementary to this, Da Silva et al. \cite{DDRS24} established higher regularity estimates for weak solutions of weighted quasilinear elliptic models of Hardy-H\'{e}non-type, in some specific scenarios. These advances collectively enriching the framework for analyzing degenerate elliptic operators.

In our work, we examine higher regularity of solutions for a class fully non-linear equation with double degeneracy of the form
\begin{equation}\label{intro:eq5}
- \bigg(|Du|^{\widetilde{p}}  + a(x) |Du|^{q}    \bigg) \mathcal{I}_{\tau}(u,x) = f(x) \ \ \text{in} \ \ B_{1},
\end{equation}
where $ 0< \widetilde{p} \leq q < \infty $, and $ \widetilde{p} < \tau -1$. Before proceeding, we give source term $ f $ a stronger assumption than \hyperref[A3]{\bf (A3)}, namely,

\vspace{2mm}

\label{A3a} {\bf (A3a)}: suppose that $ f(0)=0 $ and for some $ K >0 $ and $ \gamma \in (\widetilde{p}+2-\tau,1)$, there holds
\begin{equation*}
|f(x)| \leq K|x|^{\gamma}.
\end{equation*}

Now we state the second main result of this paper.

\begin{Theorem}\label{thm2}(higher regularity in the degenerate case)
Let $ u \in C^{0}(\overline{B}_{1}) $ be a bounded viscosity solution to (\ref{intro:eq5}), assume \hyperref[A1]{\bf (A1)}, \hyperref[A2]{\bf (A2)}, \hyperref[A3a]{\bf (A3a)}, \hyperref[A4]{\bf (A4)}and \hyperref[A5]{\bf (A5)} hold. Assume in addition that $ 0 $ is a local extrema for $ u $. Then, there exists $ \tau_{0} \in (1,2) $ sufficiently close to $ 2 $ such that if $ \tau \in (\tau_{0},2) $, then $ u \in C^{2,\alpha}(0) $ and
\begin{equation*}
  |u(x)-u(0)| \leq C|x|^{2+\alpha},
\end{equation*}
for any $ x \in B_{1/10}(0) $, where
\begin{equation*}
  \alpha = \frac{\gamma+ \tau -2-\widetilde{p}}{1+\widetilde{p}}\in (0,1),
\end{equation*}
and $ C = C(n,\gamma,\tau,\widetilde{p},\lambda,\Lambda)>0 $ is a positive constant.
\end{Theorem}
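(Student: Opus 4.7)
The proof follows the iterative polynomial-approximation scheme that has become standard for higher regularity at critical points of degenerate equations (in the spirit of Nascimento \cite{T24} in the local case), adapted to the non-local framework by exploiting the limiting behavior as $\tau \to 2^-$ described in Remark \ref{intro:rk2}. Normalizations $u(0) = 0$ and $\|u\|_{L^\infty(B_1)} \leq 1$ may be assumed without loss of generality.

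The heart of the argument is an inductive construction of symmetric matrices $A_k \in \mathbb{R}^{n\times n}$ with $A_0 = 0$, uniformly bounded $\|A_k\| \leq C_\star$, for which
\[
\sup_{B_{\rho^k}} \bigl|u(x) - \tfrac{1}{2}\langle A_k x, x \rangle\bigr| \leq \rho^{k(2+\alpha)}, \qquad k = 0, 1, 2, \dots,
\]
with a universal $\rho \in (0, 1)$ fixed once and for all from the approximation lemma below. Once this is in place, the sequence $(A_k)$ is Cauchy with $\|A_{k+1}-A_k\| \leq C\rho^{k\alpha}$, so $A_k \to A_\infty$, and a routine telescoping over dyadic annuli yields the desired $|u(x) - u(0)| \leq C|x|^{2+\alpha}$ for every $x \in B_{1/10}$.

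For the inductive step, given $P_k(x) = \tfrac{1}{2}\langle A_k x, x\rangle$, set
\[
v(x) := \rho^{-k(2+\alpha)}\bigl[u(\rho^k x) - P_k(\rho^k x)\bigr], \qquad x \in B_1.
\]
A scaling computation, combined with the homogeneity $\mathcal{I}_\tau(u,\rho^k x) = \rho^{-k\tau}\widetilde{\mathcal{I}}_\tau(\tilde u, x)$ of the frozen non-local operator, shows that $v$ is a normalized viscosity solution of
\[
-\bigl(|\rho^{k\alpha}Dv + A_k x|^{\widetilde p} + \tilde a_k(x)\,|\rho^{k\alpha}Dv + A_k x|^{q}\bigr)\,\widetilde{\mathcal{I}}_\tau(v,x) = \tilde f_k(x),
\]
where $\widetilde{\mathcal{I}}_\tau$ lies in the same ellipticity class as $\mathcal{I}_\tau$, $\tilde a_k \geq 0$, and $|\tilde f_k(x)| \leq K|x|^\gamma$. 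Balancing the powers of $\rho^k$ on both sides is exactly what forces the sharp exponent $\alpha = (\gamma + \tau - 2 - \widetilde p)/(1+\widetilde p)$ stated in the theorem; the hypotheses $\widetilde p < \tau - 1$ and $\gamma \in (\widetilde p + 2 - \tau, 1)$ guarantee $\alpha \in (0,1)$. Invoking the approximation machinery (cf.\ Proposition \ref{prop:3.2}) with $\tau$ close enough to $2$ so that the cancellation property of Remark \ref{intro:rk2} is available, I produce a solution $h$ of $\inf_i\sup_j k_{ij}\mathrm{Tr}(D^2 h) = 0$ in $B_{1/2}$ with $\|h - v\|_{L^\infty(B_{1/2})}$ arbitrarily small. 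Standard $C^{2,\beta_0}$ interior estimates for this uniformly elliptic linear equation then yield a quadratic polynomial $Q(x) = \tfrac{1}{2}\langle Bx, x\rangle$ with $\|B\|$ universal and $\sup_{B_\rho}|h - Q| \leq C\rho^{2+\beta_0}$; choosing $\rho$ small enough that $C\rho^{2+\beta_0} \leq \tfrac{1}{2}\rho^{2+\alpha}$ and transferring back to $u$ gives $A_{k+1} := A_k + \rho^{k\alpha} B$, which closes the induction.

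The principal difficulty is the approximation step itself, because the degeneracy coefficient $|\rho^{k\alpha} Dv + A_k x|^{\widetilde p}$ has a zero locus that drifts with the iteration rather than sitting on $\{Dv = 0\}$. The compactness/contradiction argument underlying Proposition \ref{prop:3.2} must therefore be revisited to accommodate equations whose degeneracy depends simultaneously on the gradient and on the spatial variable through $A_k x$; the rescue is that for $|x|$ bounded below $A_k x$ supplies a non-vanishing baseline placing the rescaled equation in the uniformly elliptic regime, while in a neighborhood of the origin one exploits the Hölder vanishing of $\tilde f_k$ together with the extremum structure to rule out concentration of oscillations. A separate technical point is the interpretation of $\mathcal{I}_\tau$ acting on $u - P_k$ for polynomial $P_k$, which has to be handled by truncation of $P_k$ outside $B_{\rho^k}$ together with a careful estimate of the resulting tail correction, whose contribution to the rescaled source is $O(\rho^{k(2-\tau)})$ and thus absorbed into $\tilde f_k$.
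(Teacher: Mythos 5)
Your proposal takes a genuinely different route from the paper, and unfortunately it runs into several serious obstructions that the paper's actual argument is carefully designed to avoid.

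The paper's proof does \emph{not} subtract quadratic polynomials. It proves a flatness-improvement lemma (Lemma~\ref{se4:lemma1}) at the local extremum: for normalized solutions with $0$ a local minimum, $\sup_{B_{1/10}}(u-u(0))\leq \delta$. The compactness argument behind it produces a limit $u_\infty$ solving $F_\infty(D^2u_\infty)=0$, and---this is the step your scheme has no analogue of---since $x_\infty$ remains a local minimum, the \emph{strong maximum principle} forces $u_\infty$ to be a constant, giving the contradiction. Theorem~\ref{thm2} then follows by iterating the flatness lemma on the dilations $v_{k+1}(x)=10^{2+\alpha}v_k(x/10)$, with $\alpha$ chosen so the rescaled source $f_k$ stays bounded. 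No polynomial correction ever appears, precisely because the extremum structure forces $Du(0)=D^2u(0)=0$.

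Your quadratic-polynomial iteration has at least three gaps that the paper's route sidesteps. First, subtracting $P_k(x)=\tfrac12\langle A_kx,x\rangle$ is problematic for the non-local operator: with $\tau<2$, the integral defining $\mathcal{I}_\tau$ applied to a quadratic diverges at infinity, so $u-P_k$ is not in $L^1_\tau(\mathbb{R}^n)$ and the rescaled function $v$ does not directly solve an equation of the same class. You flag the need for truncation, but the claim that the tail correction is cleanly absorbed into $\tilde f_k$ is not substantiated, and this correction term is in fact where the delicate bookkeeping lives. Second, your inductive step requires a $C^{2,\beta_0}$ estimate for solutions of the limiting equation $\inf_i\sup_j k_{ij}\mathrm{Tr}(D^2h)=0$. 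This is an Isaacs-type (inf-sup) equation, which is neither concave nor convex, so Evans--Krylov does not apply; the only interior estimate available is $C^{1,\alpha_0}$ via Krylov--Safonov (indeed Proposition~\ref{prop:3.2} of the paper only extracts $C^{1,\alpha_0}$). You therefore cannot produce the quadratic $Q$ you need. Third, even granting the iteration, your final telescoping to $|u(x)-u(0)|\leq C|x|^{2+\alpha}$ silently requires $A_\infty=0$: if $A_\infty\neq0$ the conclusion would only be $|u(x)-u(0)-\tfrac12\langle A_\infty x,x\rangle|\leq C|x|^{2+\alpha}$, hence merely quadratic decay of $u-u(0)$. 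The fact that $A_\infty=0$ is exactly the content of the extremum hypothesis, but your argument never uses it. In short, the central insight you are missing is that local extremality replaces second-derivative estimates by the strong maximum principle and eliminates the polynomial correction altogether.
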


Observing the constant $ C $ in Theorem \ref{thm2} will not blow up as $ \tau \rightarrow 2 $. Hence, Theorem \ref{thm2} could be considered as an extension of \cite[Theorem 3]{T24}. The proof of Theorem \ref{thm2} relies on several ingredients. First, we can build a new improvement of flatness estimate at the local extrema, see Lemma \ref{se4:lemma1}. Then we utilize the flatness estimate and scaling techniques repeatedly to obtain the desired geometric decay.

\vspace{2mm}

In the singular case, we shall replace the condition \hyperref[A6a]{\bf (A6a)} by \hyperref[A6b]{\bf (A6b)}. This condition covers singular case, such as
\begin{equation*}
  \sigma_{1}(t) = t^{\widehat{p}}, \ \ \sigma_{2}(t) = t^{q}, \ -1 < \widehat{p} \leq q <0.
\end{equation*}
The condition \hyperref[A6b]{\bf (A6b)} also contains some important examples in \cite{SSKS24}, for instance,
\begin{equation*}
  \sigma_{1}(t) = \frac{(e^{t}-1)^{\beta}+1}{t}, \ \ \sigma_{2}(t) = \frac{(e^{t}-1)^{\beta}}{t}, \ \ 0 < \beta \leq 1-\frac{1}{e} ,
\end{equation*}
and
\begin{equation*}
  \sigma_{1}(t)= \frac{\ln^{\beta}(1+t)+1}{t}, \ \ \sigma_{2}(t)= \frac{\ln^{\beta}(1+t)}{t},\ \ 0 < \beta \leq 1 .
\end{equation*}

We state the third main result as follows.
\begin{Theorem}\label{thm3}($ C^{1,\beta_{0}} $ regularity in the singular case)
Let $ u \in C^{0}(\overline{B}_{1}) $ be a bounded approximated viscosity solution to \eqref{Intro:eq1}, assume \hyperref[A1]{\bf (A1)}, \hyperref[A2]{\bf (A2)}, \hyperref[A3]{\bf (A3)}, \hyperref[A4]{\bf (A4)} and \hyperref[A6b]{\bf (A6b)} hold. Then $ u \in C^{1,\beta_{0}}_{loc}(B_{1}) $. Moreover, there exists $ C >0 $ depending only on $ n, \lambda, \Lambda, ||f||_{L^{\infty}(B_{1})}$ and $||u||_{L^{\infty}(B_{1})}  $ such that
\begin{equation*}
  |Du(x)-Du(y)| \leq C|x-y|^{\beta_{0}},
\end{equation*}
for some $ \beta_{0} \in (0,1)$ and every $ x,y \in B_{1/2} $.
\end{Theorem}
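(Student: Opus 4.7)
The plan is to adapt the Caffarelli-Silvestre type iterative approximation scheme to the singular regime (A6b). The key structural feature is that, unlike the degenerate case, the coefficient $\sigma_1(|Du|)+a(x)\sigma_2(|Du|)$ remains bounded above and below by positive constants whenever $|Du|$ lies in a bounded interval, because $\sigma_i(0)=c_0>0$ and $\sigma_i$ are decreasing. Thus, at the level of the approximating sequence used in the definition of \emph{approximated viscosity solution}, the equation can be rewritten as
\begin{equation*}
-\mathcal{I}_\tau(u,x) = \frac{f(x)}{\sigma_1(|Du|)+a(x)\sigma_2(|Du|)},
\end{equation*}
whose right-hand side is bounded once a uniform Lipschitz estimate for $u$ is available, and the non-collapsing issue that motivated the machinery used for Theorem \ref{thm1} is absent.

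The first step is a uniform local Lipschitz estimate for the approximating solutions. Because $\sigma_1+a\sigma_2\geq c_0$, an Ishii--Lions doubling variable argument applied to $\Phi(x,y)=u(x)-u(y)-L\omega(|x-y|)-M|x-x_0|^2$ yields a Lipschitz bound depending only on $n,\lambda,\Lambda,\tau,c_0,\|u\|_{L^\infty},\|f\|_{L^\infty}$; the translation-equation trick of \eqref{Intro:eq3p} and Lemma \ref{lem2.3} (already used in the degenerate case) carries over mutatis mutandis, and in fact more easily since $\sigma_1+a\sigma_2$ is bounded away from zero.

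The second step is an approximation lemma: for any $\delta>0$ there exist $\eta>0$ and $\tau_0 \in (1,2)$ such that if $\|u\|_{L^\infty(\mathbb{R}^n)}\leq 1$, $\|f\|_{L^\infty(B_1)}\leq \eta$ and $\tau\in(\tau_0,2)$, then there is an $\mathcal{I}_\infty$-harmonic function $h$ (i.e.\ $\inf_i\sup_j k_{ij}\,\mathcal{I}_\tau^{(k_{ij})}h=0$ with constant-coefficient kernels) satisfying $\|u-h\|_{L^\infty(B_{3/4})}\leq \delta$. The proof is by compactness and contradiction: a violating sequence $(u_m,f_m,\sigma_1^m,\sigma_2^m,a_m)$ converges, along the Lipschitz-precompact subsequence from Step 1, to some $u_\infty$; the positive bounded coefficient $\sigma_1^m(|Du_m+\xi|)+a_m\sigma_2^m(|Du_m+\xi|)$ stays uniformly bounded above and below, so passing to viscosity limits (using (A5) to identify the limiting kernels) yields $\mathcal{I}_\infty u_\infty = 0$, producing the desired $h$ and a contradiction. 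Note: because of boundedness from below of the coefficient, no cancellation trick as in Remark \ref{intro:rk2} is required here.

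The third step is the iteration. Set $\alpha_0$ to be the Caffarelli--Silvestre $C^{1,\alpha_0}$ exponent for $\mathcal{I}_\infty$-harmonic functions, pick $\beta_0\in(0,\alpha_0)$, and choose $\rho\in(0,1/2)$ so that the one-step improvement $\rho^{1+\alpha_0}(\|h\|_{C^{1,\alpha_0}}+\delta/\rho^{1+\beta_0}) \leq \rho^{1+\beta_0}/2$ holds. One then shows by induction that there exist affine functions $\ell_k(x)=A_k+B_k\cdot x$ with $|A_{k+1}-A_k|+\rho^k|B_{k+1}-B_k|\lesssim \rho^{k(1+\beta_0)}$ and
\begin{equation*}
\sup_{B_{\rho^k}}|u-\ell_k| \leq \rho^{k(1+\beta_0)},
\end{equation*}
obtained by applying the approximation lemma to the rescaled functions
\begin{equation*}
u_k(x):=\frac{u(\rho^k x)-\ell_k(\rho^k x)}{\rho^{k(1+\beta_0)}}.
\end{equation*}
The crucial point is that $u_k$ satisfies an equation of the same form as \eqref{Intro:eq1} with $(\sigma_1,\sigma_2)$ replaced by $\widetilde{\sigma}_i(t)=\sigma_i(\rho^{k\beta_0}t)/(\text{bounded scaling factor})$ and a kernel family still in $\mathcal{K}_0$; because $\sigma_i$ are decreasing and bounded above by $\sigma_i(0)=c_0$ (and below by $\sigma_i(1)\geq 1$ for $|Du_k|\leq 1$), the rescaled nonlinearities satisfy (A6b) uniformly in $k$, and the rescaled source $\widetilde{f}_k(x)=\rho^{k(1-\beta_0)+k(\tau-1-\beta_0)\text{ish}}f(\rho^k x)$ remains small provided $\beta_0 \leq (\tau-1)/(1+\text{something})$ is chosen properly. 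Summing the geometric series converts this decay into $u\in C^{1,\beta_0}$ at the origin; translation invariance of the hypotheses gives the estimate at every point of $B_{1/2}$.

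The main obstacle is the compatibility of the rescaling with the singular moduli $\sigma_i$: one must verify that after rescaling $u\mapsto u_k$, the effective singularity degree does not increase with $k$, which ultimately determines the admissible range of $\beta_0$. This is the analog, in the singular setting, of the constraint $\alpha\leq 1/(1+\gamma)$ appearing in \cite{CL13, DE21, PDM24}; here it is replaced by a qualitative condition coming from (A6b), and its verification via the non-collapsing-from-below property $\sigma_1+a\sigma_2\geq c_0$ is the technical heart of the proof.
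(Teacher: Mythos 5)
Your overall strategy is more elaborate than the paper's and imports hypotheses that Theorem~\ref{thm3} does not actually assume. The paper's proof is much shorter: once a uniform Lipschitz estimate for the approximating sequence $(u_j)$ is established (Proposition~\ref{se5:pro1}, via the same Ishii--Lions doubling of variables you describe), the singular coefficient is bounded below, $\sigma_1(|Du_j|+c_j)+a(x)\sigma_2(|Du_j|+c_j)\geq c_0/2$, so one simply divides through and obtains the pair of inequalities $-\mathcal{I}_\tau(u_j,x)\leq \tfrac{2}{c_0}\|f\|_{L^\infty}$ and $-\mathcal{I}_\tau(u_j,x)\geq -\tfrac{2}{c_0}\|f\|_{L^\infty}$. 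At that point $u_j$ solves a uniformly elliptic integro-differential inequality with bounded right-hand side, and the $C^{1,\beta_0}$ interior estimate follows directly from Caffarelli--Silvestre, \cite[Theorem 52]{LL11}, with constants independent of $j$; one then passes to the limit $j\to\infty$. There is no approximation lemma and no iteration scheme to build.

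The gap in your proposal is that your Step~2 (approximation lemma by an $\mathcal{I}_\infty$-harmonic function) and Step~3 (geometric iteration) implicitly invoke assumption \hyperref[A5]{\bf (A5)} and require $\tau$ sufficiently close to $2$, but Theorem~\ref{thm3} assumes neither: its hypotheses are only \hyperref[A1]{\bf (A1)}--\hyperref[A4]{\bf (A4)} and \hyperref[A6b]{\bf (A6b)}, and the conclusion holds for every $\tau\in(1,2)$. So even if your iteration were carried out correctly, it would prove only a restricted version of the statement. Moreover, your discussion of how the rescaled source behaves, including the exponent you write as $\rho^{k(1-\beta_0)+k(\tau-1-\beta_0)\text{ish}}$, is not worked out, and the constraint determining the admissible $\beta_0$ is left as a qualitative remark rather than a verified inequality. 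The correct observation, that the non-collapsing issue is absent in the singular regime, is present in your write-up and is indeed the key structural point, but it should be exploited immediately by reducing to the uniformly elliptic theory rather than by rebuilding the degenerate-case machinery without the hypotheses that make it run.
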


For singular non-local equation, the challenge comes from the definition of viscosity solution, see the detailed description in \cite{PDM24}. Analogous to \cite[Theorem 1.3]{PDM24}, we shall use the notion of {\it approximated viscosity solution}. Due to the generality of $ \sigma_{i} $, $ i=1,2 $, Theorem   \ref{thm3} can be regarded as a slight generalization of \cite[Thorem 1.3]{PDM24}.

\vspace{2mm}

{\bf{Novelty of this paper.}} Here, we summarize the main novelties of this paper as follows:

\vspace{2mm}

$ \text{(i)} $ We combine ideas from Andrade {\it et.al.} \cite{PDEE22} and the recent work of Andrade-Prazeres-Santos \cite{PDM24} to address the obstruction caused by critical borderline regularity. To implement this strategy, we construct a completely new algorithm for selecting the normalization at each step, based on a {\it shoring-up} technique that effectively prevents the resulting collapse of $ \sigma_{1} + a(\cdot)\sigma_{2} $. Moreover, our result, Theorem \ref{thm1}, also extends the main results from \cite{ART15, CL13, C21, DE21, PDM24, PDEE22} in a unified way: see Table \ref{Table1}. This unified perspective is likely to be of independent interest.

\vspace{2mm}

$ \text{(ii)} $ We derive a non-local counterpart of \cite[Theorem 3]{T24} by employing a completely new flatness estimate at the local extrema, combined with scaling techniques. Thus, Theorem  \ref{thm2} generalizes the main result established in \cite[Theorem 3]{T24}.

\vspace{2mm}

$ \text{(iii)} $ On account of the generality of $ \sigma_{i}, i=1,2 $, \cite[Theorem 1.3]{PDM24} can be regarded as a direct consequence of Theorem   \ref{thm3}.

\vspace{2mm}

{\bf{State-of-the-art.}} In recent years, degenerate operator-driven equations have attracted growing research interest.

Regarding obstacle-type problems, Da Silva-Vivas in \cite{1DV21} and \cite{2DV21} established existence and optimal regularity estimates for degenerate elliptic models in non-divergence form:
\begin{equation*}
   \min \{f-|\nabla u|^{\gamma} \Delta u, u -\phi \} = 0 \ \ \text{in} \ \ B_{1} \ \ \left(\text{resp.} \ \ |\nabla u|^{\gamma} = 1\chi_{\{u>\phi\}}\right),
\end{equation*}
where $ \gamma > 0 $, $ f \in L^{\infty}(B_{1}) $, and $ \phi \in C^{1,\beta}(B_{1}) $ with $ \beta \in (0,1] $. Their work demonstrates that solutions belong to $ C^{1,\min\{\beta,\frac{1}{1+\gamma}\}} $, with \cite{2DV21} employing innovative geometric techniques to derive sharp regularity estimates for $ |\nabla u|^{\gamma} = 1\chi_{\{u>\phi\}} $.

Parallel developments include the work of Bezerra-J\'{u}nior et al.\cite{BdaR223, BdaRR23}, who established locally (uniformly) Lipschitz estimates for fully nonlinear singularly perturbed models with non-homogeneous degeneracy laws, along with global gradient estimates for a class of fully nonlinear PDEs featuring unbalanced variable degeneracy.

\vspace{1mm}

The field has witnessed particularly influential advances in free transmission problems. De Filippis' seminal work \cite{C22} pioneered the study of fully nonlinear elliptic equations with nonhomogeneous degeneracies, proving existence and local $ C^{1,\alpha} $ regularity for viscosity solutions to
\begin{align*}
 \left\{
     \begin{aligned}
     &  \big(|Du|^{p_{u}(x)} + a(x) \chi_{\{u>0\}} |Du|^{q} + b(x) \chi_{\{u<0\}} |Du|^{s}        \big) F(D^{2}u) = f(x)  \qquad  \ \ \text{in} \ \  B_{1}          \\
     & u=g  \qquad \qquad \qquad \qquad \qquad \qquad \qquad \qquad \qquad \qquad \qquad \qquad \qquad  \qquad \  \ \text{on}  \  \partial B_{1}                  ,          \\
     \end{aligned}
     \right.
\end{align*}
where $ p_{u}(x) = p^{+} \chi_{\{u>0\}} + p_{-} \chi_{\{u<0\}} $, $ 0 \leq p^{+} \leq q $, $ 0 \leq p_{-} \leq s  $ and $ 0 \leq a(\cdot), b(\cdot) \in C^{0}(B_{1}) $. Subsequently, Wang-Yin-Jiang in \cite{WYJ25} recently established analogous results for normalized $ p $-Laplacian equation with nonhomogeneous degeneracies, thereby broadening the applicability of regularity theory to a wider class of degenerate elliptic problems.

\vspace{1mm}

Finally, completing this state-of-the-art review, we must mention the recent work by Bezerra J\'{u}nior-da Silva in \cite{Bda25}, which combined the local estimates from \cite{BdaR223} with Harnack's inequality, A.B.P. Maximum Principle etc to establish global Lipschitz estimates for fully non-linear singular perturbation models with non-homogeneous signature. For a comprehensive overview of recent developments in fully nonlinear PDEs with unbalanced degeneracy laws, we refer the readers to \cite{BdaRRV22}.

\vspace{2mm}

{\bf{Organization of the paper.}} In Section \ref{Section 2}, we present some definitions and collect several auxiliary results. In Section \ref{Section 3}, we give a complete proof of Theorem \ref{thm1}. Section \ref{Section 4} is dedicated to the proof of Theorem \ref{thm2}. In Section \ref{Section 5}, we present the proof of Theorem \ref{thm3}. Finally, we finish the whole paper by proving borderline regularity of a degenerate non-local normalized $ p$-Laplacian equation.

\vspace{3mm}

\section{Preliminaries}\label{Section 2}
In this section, we present some definitions and show some preliminary results, which will be used throughout the paper.

\subsection{Notations}\label{subsec:2.1}
We first present the definition of viscosity solution to (\ref{Intro:eq1}).

\begin{Definition}
 We say that $ u \in C^{0}(\overline{B}_{1}) \cap L_{\tau}^{1}(\mathbb{R}^{n}) $ is a viscosity subsolution to (\ref{Intro:eq1}), if for any $ \varphi \in C^{2}(\mathbb{R}^{n}) $ and for all $ x_{0} \in B_{1} $ such that $ u-\varphi $ has a local maximum at $ x_{0} $, we have
 \begin{equation*}
   - \bigg( \sigma_{1}(|D\varphi|) +a(x_{0}) \sigma_{2}(|D\varphi|)    \bigg) \mathcal{I}_{\delta}(u,\varphi,x_{0}) \leq f(x_{0}) \ \ \text{in} \ \ B_{1},
 \end{equation*}
 where the operator $ \mathcal{I}_{\delta} $ is given by
 \begin{align*}
   \mathcal{I}_{\delta}(u,\varphi,x) &:= \inf_{i} \sup_{j} \big(I_{K_{ij}}[B_{\delta}](\varphi,x) + I_{K_{ij}}[B_{\delta}^{c}](u,x)    \big) \\
   & = \inf_{i} \sup_{j} \bigg[P.V. \int_{B_{\delta}} \big(\varphi(x+y)-\varphi(x)  \big)K_{ij}(y)dy + P.V. \int_{B_{\delta}^{c}} \big(u(x+y)-u(x)  \big)K_{ij}(y)dy                          \bigg].
 \end{align*}
 Similarly, we say that $ u \in C^{0}(\overline{B}_{1}) \cap L_{\tau}^{1}(\mathbb{R}^{n}) $ is a viscosity subsolution to (\ref{Intro:eq1}), if for any $ \varphi \in C^{2}(\mathbb{R}^{n}) $ and for all $ x_{0} \in B_{1} $ such that $ u-\varphi $ has a local minimum at $ x_{0} $, we have
  \begin{equation*}
   - \bigg( \sigma_{1}(|D\varphi|) +a(x_{0}) \sigma_{2}(|D\varphi|)    \bigg) \mathcal{I}_{\delta}(u,\varphi,x_{0}) \geq f(x_{0}) \ \ \text{in} \ \ B_{1}.
 \end{equation*}
 Finally, we say that $ u \in C(\mathbb{R}^{n}) $ is a viscosity solution to (\ref{Intro:eq1}) if it is both a viscosity subsolution and supersolution.
\end{Definition}

For singular fully nonlinear non-local elliptic equation, similar to \cite{PDM24}, we introduce the notion of {\it approximated viscosity solution}.

\begin{Definition}\label{def22}(approximated viscosity solution) Under the assumptions of Theorem \ref{thm3}, we say that $ u \in C^{0}(\overline{B}_{1}) $ is an approximated viscosity solution to
\begin{equation}\label{Pre:eq1}
- \bigg( \sigma_{1}(|Du|)  + a(x) \sigma_{2}(|Du|)    \bigg) \mathcal{I}_{\tau}(u,x) = f(x) \ \ \text{in} \ \ B_{1},
\end{equation}
if there are sequences $(u_{j})_{j\in\mathbb{N}} \in C^{0}(\overline{B}_{1})\cap L_{\tau}^{1}(\mathbb{R}^{n})  $, $ C_{1} > 0 $, $\alpha\in (-1, \tau-1)$ and $ (c_{j})_{j\in\mathbb{N}} \in \mathbb{R}^{+}  $ satisfying
\begin{enumerate}
    \item $ u_{j} \rightarrow u   $ locally uniformly in $ B_{1} $;

    \item $ ||u_{j}||_{L_{\tau}^{1}(\mathbb{R}^{n})}  \leq C_{1}(1+|x|^{1+\alpha}) $;

    \item $ c_{j} \rightarrow 0 $,
\end{enumerate}
such that $ u_{j} $ is a viscosity solution to
    \begin{equation*}
- \bigg( \sigma_{1}(|Du_{j}|+c_{j})  + a(x) \sigma_{2}(|Du_{j}|+c_{j})    \bigg) \mathcal{I}_{\tau}(u_{j},x) = f(x) \ \ \text{in} \ \ B_{1}.
    \end{equation*}
\end{Definition}

\begin{remark}\label{Pre:rk1}
Under the assumptions of Theorem \ref{thm3}, if $ u \in C^{0}(\overline{B}_{1}) $ is an approximated viscosity solution to equation (\ref{Pre:eq1}), this implies $ u $ is a viscosity solution to (\ref{Pre:eq1}), see, for instance, \cite[Proposition 3.1]{PDM24}. It is noteworthy that this conclusion also holds for non-local normalized $ p$-Laplacian operator $ \Delta_{p,N}^{s} $, see Section \ref{section 6}.
\end{remark}

To construct a family of non-collapsing moduli of continuity $\Sigma $ in Section \ref{subsec:3.3}, similar to \cite{PDEE22}, we also introduce the notation of {\it shored-up}.

\begin{Definition}\label{Def23}
A sequence of moduli of continuity $(\sigma_{1}^{k})_{k \in \mathbb{N}} $ is said to be shored-up if there exists a sequence of positive numbers $ (c_{k})_{k \in \mathbb{N}} $ such that $ c_{k} \rightarrow 0 $ satisfying $ \inf_{k} \sigma_{1}^{k}(c_{k}) > 0 $ for every $ k \in \mathbb{N} $.
\end{Definition}

\subsection{Auxiliary results}
In the study of interior regularity of (\ref{Intro:eq1}), a fundamental ingredient is the compactness for perturbed PDEs. Hence, we first recall the non-local version of Jensen-Ishii lemma, see \cite[Corollary 4.3.1]{GC08}, used to produce compactness in this paper.

\begin{Lemma}\label{lem2.3}
Let $ F : \mathbb{R}^{n} \times \mathbb{R} \times \mathbb{R}^{n} \times \mathbb{S}^{n} \times \mathbb{R}  \rightarrow \mathbb{R} $ be a continuous function satisfying the local and nonlocal degenerate ellipticity conditions: for any $ x \in \mathbb{R}^{n} $, $ u \in \mathbb{R} $, $ p \in \mathbb{R}^{n} $, $ M, N \in \mathbb{S}^{n}$, $ l_{1}, l_{2} \in \mathbb{R} $,
\begin{equation*}
  F(x,u,p,M,l_{1}) \leq F(x,u,p,N,l_{2}) \ \ \text{if} \ \ M \geq N, l_{1} \geq l_{2}.
\end{equation*}
Let $ u $ be a viscosity solution of
$$  F(x, u, Du, D^{2}u, \mathbf{I}[x,u])=0 \ \ \text{in} \ \ B_{1} .  $$
Consider a function $ v : B_{1} \times B_{1} \rightarrow \mathbb{R}   $ given by
$$  v(x,y):= u(x) - u(y)  .    $$
Let $ \varphi \in C^{2}(B_{1} \times B_{1})   $ and $ (x_{1}, y_{1}) \in B_{1} \times B_{1}   $ be a point of global maxima of $ v(x,y)- \varphi(x,y) $ in $ B_{1} \times B_{1} $. Then there exists $ \epsilon >0 $ small enough and matrices $ X,Y \in \mathbb{S}^{n}  $ such that
$$ F\big(x_{1}, u(x_{1}), D_{x}\varphi(\cdot,y_{1}),X, \mathbf{I}^{1,\delta}[x_{1},\varphi(\cdot,y_{1})]+ \mathbf{I}^{2,\delta}[x_{1},D_{x}\varphi(\cdot,y_{1}),u]  \big) \leq 0 ,    $$
and
$$ F\big(y_{1}, u(y_{1}), -D_{y}\varphi(x_{1},\cdot),Y , \mathbf{I}^{1,\delta}[y_{1},-\varphi(x_{1},\cdot)] + \mathbf{I}^{2,\delta}[y_{1},-D_{y}\varphi(x_{1},\cdot),u]  \big) \geq 0 ,     $$
where
$$ \mathbf{I}^{1,\delta}[x,\phi] = \int_{B_{\delta}} \bigg( \phi(x+y)-\phi(x)-D\phi(x) \cdot y \bigg)\mu (dy) ,$$
$$ \mathbf{I}^{2,\delta}[x,p,\omega] = \int_{\mathbb{R}^{n}\setminus B_{\delta}} \bigg( \omega(x+y)-\omega(x)-p \cdot y \mathbb{I}_{\{z\leq1\}}\bigg)\mu (dy) ,$$
for $ \delta \ll 1 $ and suitable measure $ \mu $. Also, we have the following matrix inequality
\begin{equation*}
-\frac{1}{\epsilon} \textbf{I} \leq
\begin{pmatrix}
X  &   0   \\
0  &    -Y
\end{pmatrix}
\leq
\begin{pmatrix}
D_{x}^{2}\varphi(x_{1},y_{1})   &   - D_{xy}^{2} \varphi(x_{1},y_{1})  \\
- D_{xy}^{2} \varphi(x_{1},y_{1})   &    D_{y}^{2}\varphi(x_{1},y_{1})
\end{pmatrix}
 + \epsilon\textbf{I} , \ \  0 < \epsilon \ll 1 .
\end{equation*}
\end{Lemma}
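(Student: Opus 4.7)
\textbf{Proof proposal for Lemma \ref{lem2.3}.} The plan is to follow the standard doubling-variables-plus-sup-convolution strategy of Crandall--Ishii, adapted to the non-local setting in the spirit of Barles--Imbert and Barles--Chasseigne--Imbert, keeping the small-scale/large-scale split $\mathbf{I}^{1,\delta}+\mathbf{I}^{2,\delta}$ that already appears in the statement. The doubled function $w(x,y):=u(x)-u(y)-\varphi(x,y)$ attains a strict global maximum at $(x_{1},y_{1})$ after adding a small quartic penalization, so we may assume strictness without loss of generality. The engine of the proof is to regularize $u$ by two one-sided sup- and inf-convolutions
\[
u^{\varepsilon}(x)=\sup_{z}\Bigl\{u(z)-\tfrac{1}{2\varepsilon}|x-z|^{2}\Bigr\},\qquad u_{\varepsilon}(y)=\inf_{z}\Bigl\{u(z)+\tfrac{1}{2\varepsilon}|y-z|^{2}\Bigr\},
\]
which are respectively semiconvex and semiconcave, converge locally uniformly to $u$, and, by standard arguments, are a subsolution and a supersolution of perturbed versions of the equation for $F$ at points close to $(x_{1},y_{1})$.

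Next I would apply Jensen's lemma and Alexandrov's theorem to the semiconvex--semiconcave function $(x,y)\mapsto u^{\varepsilon}(x)-u_{\varepsilon}(y)-\varphi(x,y)$. Jensen's lemma produces, for a.e.\ small linear perturbation, a maximum point $(x_{\varepsilon},y_{\varepsilon})\to(x_{1},y_{1})$ at which $u^{\varepsilon}$ and $u_{\varepsilon}$ are twice differentiable in the Alexandrov sense, yielding Hessian matrices $X_{\varepsilon},Y_{\varepsilon}$. The first-order condition gives the gradients $D_{x}\varphi(\cdot,y_{\varepsilon})$ and $-D_{y}\varphi(x_{\varepsilon},\cdot)$, while the second-order condition at an interior maximum of the sum yields exactly the block matrix inequality
\[
\begin{pmatrix}X_{\varepsilon}&0\\0&-Y_{\varepsilon}\end{pmatrix}\leq D^{2}\varphi(x_{\varepsilon},y_{\varepsilon})+\varepsilon\,\mathbf{I},
\]
and the semiconvexity/semiconcavity provides the matching lower bound $-\varepsilon^{-1}\mathbf{I}\leq(\cdot)$. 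A standard compactness argument then lets one extract $X_{\varepsilon}\to X$, $Y_{\varepsilon}\to Y$ along a subsequence, so the matrix inequality passes to the limit.

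For the non-local piece, the crucial observation is that the sup/inf-convolution of $u$ is pointwise sandwiched between $u$ and $u$ translated by a small shift, so on $\mathbb{R}^{n}\setminus B_{\delta}$ (where no differentiability of $u$ is needed) one can pass the integral $\mathbf{I}^{2,\delta}[\cdot,p,u^{\varepsilon}]$ to $\mathbf{I}^{2,\delta}[\cdot,p,u]$ by dominated convergence using assumption \hyperref[A2]{\bf (A2)} and the kernel bound \eqref{bd for K}. On $B_{\delta}$, one replaces $u^{\varepsilon}$ (resp.\ $u_{\varepsilon}$) by the touching $C^{2}$ function $\varphi(\cdot,y_{\varepsilon})$ (resp.\ $-\varphi(x_{\varepsilon},\cdot)$) using the very definition of viscosity sub-/supersolution: this is legitimate because on $B_{\delta}$ the integrand is controlled by $\|D^{2}\varphi\|_{\infty}|y|^{2}$, which is $\mu$-integrable near zero since $\tau<2$. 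Combining the pointwise viscosity inequality satisfied by $u^{\varepsilon},u_{\varepsilon}$, the matrix inequality, and the non-local passage to the limit yields the two required inequalities for $F$ at $(x_{1},u(x_{1}),\dots)$ and $(y_{1},u(y_{1}),\dots)$.

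The main obstacle I expect is the reconciliation of the two regularization steps: the sup-convolution is the correct tool for Alexandrov's theorem on local terms, but it interacts nontrivially with non-local integrals because $u^{\varepsilon}$ is not itself a viscosity solution of the original equation, only of an equation with a translated argument $x+y_{z}(x)-x$ where $y_{z}$ is the (multi-valued) optimizer. The cleanest way around this is to work with the Crandall--Ishii lemma applied to the finite-dimensional problem in $(x,y)$ and then invoke the non-local limit machinery of Barles--Imbert: first produce the matrices $X,Y$ by the local theory, then verify that the small-scale integral evaluated against $\varphi$ is stable and the large-scale integral evaluated against $u$ is stable as $\varepsilon\to 0^{+}$, which together yield the stated inequalities. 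Everything else (strict maxima via penalization, degenerate ellipticity to propagate the inequalities, and the dependence on $\delta\ll 1$) is routine book-keeping once these two stability statements are in hand.
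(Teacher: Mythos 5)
The paper does not prove Lemma~\ref{lem2.3}; it is quoted verbatim as a known result and attributed to Barles--Imbert \cite[Corollary~4.3.1]{GC08} (``we first recall the non-local version of Jensen-Ishii lemma''). So there is no in-paper proof to compare your attempt against --- the expected answer here was simply a citation.

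That said, your sketch does track the architecture of the Barles--Imbert argument fairly faithfully: penalize to make the maximum strict, regularize with sup/inf-convolutions, invoke Jensen's perturbation lemma together with Alexandrov's theorem to produce the Hessian matrices and the block inequality, and handle the non-local term via the $\mathbf{I}^{1,\delta}/\mathbf{I}^{2,\delta}$ split (test function on $B_{\delta}$ using the $|y|^{2}$-integrability of the kernel, the solution itself off $B_{\delta}$ via dominated convergence). You also correctly flag the genuine subtlety: $u^{\varepsilon}$ is not a viscosity subsolution of the original non-local equation but of one whose non-local term is evaluated at a shifted base point, and this translation has to be absorbed carefully --- this is precisely the technical work that occupies most of the corresponding section of \cite{GC08}. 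One thing your sketch under-emphasizes is that the limiting passage $X_{\varepsilon}\to X$, $Y_{\varepsilon}\to Y$ must be taken jointly with the stability of the non-local terms, and that the choice of $\epsilon$ in the matrix inequality is coupled to the sup-convolution parameter, which is why the lemma only asserts existence of \emph{some} $\epsilon>0$ rather than the inequality for all small $\epsilon$. In short: your proposal is a reasonable high-level reconstruction of the cited proof, but the paper itself offers no proof, and a full write-up would essentially have to reproduce Barles--Imbert's Corollary~4.3.1.
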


We continue with a technical lemma ensuring the existence of {\it shored up}-continuous module sequence $ (\sigma_{1}^{n})_{n\in \mathbb{N}} $, see \cite[Lemma 1]{PDEE22}; see also \cite[Proposition 3]{PS24}.

\begin{Lemma}\label{Au:prop1}
Let $ (a_{j})_{j\in\mathbb{N}} \in \ell^{1}  $ and take $ \epsilon, \delta >0 $ arbitrary. There exists a sequence $ (c_{j})_{j\in\mathbb{N}} \in c_{0}  $, with $ \max_{j\in\mathbb{N}}|c_{j}| \leq \epsilon^{-1}     $, such that
\begin{equation*}
\bigg( \frac{a_{j}}{c_{j}}    \bigg)_{j\in\mathbb{N}} \in \ell^{1},
\end{equation*}
and
\begin{equation*}
\epsilon \bigg( 1-\frac{\delta}{2}    \bigg)||(a_{j})||_{\ell^{1}} \leq \bigg|\bigg|\bigg( \frac{a_{j}}{c_{j}}    \bigg)\bigg|\bigg|_{\ell^{1}} \leq \epsilon(1+\delta)||(a_{j})||_{\ell^{1}}.
\end{equation*}
\end{Lemma}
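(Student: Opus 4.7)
The plan is to build $(c_{j})$ as a piecewise-constant sequence taking a slowly decaying value $\mu_{k}/\epsilon$ on a block $B_{k}\subset\mathbb{N}$, where the blocks are chosen long enough that the tails of $(|a_{j}|)$ on $B_{k}$ are very small. The size constraint $|c_{j}|\leq\epsilon^{-1}$ will then force the lower $\ell^{1}$ bound automatically, while the tail control will deliver the upper bound.

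Set $M:=\|(a_{j})\|_{\ell^{1}}$; the case $M=0$ is trivial, so assume $M>0$. Fix any sequence $(\mu_{k})_{k\geq 1}\subset(0,1]$ with $\mu_{1}=1$ and $\mu_{k}\to 0$, for instance $\mu_{k}=1/k$. Since $(a_{j})\in\ell^{1}$, the tails $\sum_{j>N}|a_{j}|$ tend to zero as $N\to\infty$, so integers $0=N_{0}<N_{1}<N_{2}<\cdots$ can be chosen inductively so that
\begin{equation*}
\sum_{j>N_{k-1}}|a_{j}|\;\leq\;\delta\,2^{-k}\,\mu_{k}\,M\qquad\text{for every }k\geq 2.
\end{equation*}
Put $B_{k}:=\{N_{k-1}+1,\dots,N_{k}\}$, $a^{(k)}:=\sum_{j\in B_{k}}|a_{j}|$, and define $c_{j}:=\mu_{k}/\epsilon$ for $j\in B_{k}$. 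Then $|c_{j}|\leq\epsilon^{-1}$ and $c_{j}\to 0$, so $(c_{j})\in c_{0}$ with the required sup-bound.

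For the norm estimates, the relation $\mu_{k}\leq 1$ gives $|a_{j}|/|c_{j}|=\epsilon|a_{j}|/\mu_{k}\geq\epsilon|a_{j}|$ on every block, hence $\|(a_{j}/c_{j})\|_{\ell^{1}}\geq\epsilon M\geq\epsilon(1-\delta/2)M$, which is the lower bound (in fact slightly stronger). For the upper bound, splitting along the blocks yields
\begin{equation*}
\|(a_{j}/c_{j})\|_{\ell^{1}} \;=\; \epsilon\, a^{(1)} + \epsilon\sum_{k\geq 2}\frac{a^{(k)}}{\mu_{k}} \;\leq\; \epsilon M + \epsilon\delta M\sum_{k\geq 2}2^{-k} \;=\; \epsilon M(1+\delta/2)\;\leq\;\epsilon(1+\delta)M,
\end{equation*}
where we used $a^{(1)}\leq M$ and, by the tail inequality, $a^{(k)}/\mu_{k}\leq\delta\,2^{-k}M$ for $k\geq 2$.

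The only subtlety is the coupling between the decay rate $\mu_{k}$ and the block-tail bound: since $\mu_{k}$ appears on the right-hand side of the defining inequality for $N_{k}$, one must fix $\mu_{k}$ first and then enlarge $N_{k}$ accordingly. This is harmless, because the $\ell^{1}$ hypothesis permits $N_{k}$ to be taken arbitrarily large once $\mu_{k}$, $\delta$, and $M$ are specified. No further ingredient beyond $(a_j)\in\ell^1$ and the freedom to pick a $c_0$ normalization sequence is needed.
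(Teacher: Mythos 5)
Your proof is correct. The paper does not prove this lemma itself (it cites \cite[Lemma~1]{PDEE22} and \cite[Proposition~3]{PS24}), so there is no internal proof to compare against. Your self-contained block argument is sound: fixing $\mu_k=1/k$ in advance decouples the choice of $(\mu_k)$ from the choice of $(N_k)$, so the coupling you flag at the end is indeed harmless; the bound $\mu_k\leq 1$ gives $|a_j|/|c_j|\geq\epsilon|a_j|$ and hence the lower estimate $\epsilon M$; and the geometric tail control $a^{(k)}/\mu_k\leq\delta 2^{-k}M$ for $k\geq 2$ gives the upper estimate $\epsilon M(1+\delta/2)\leq\epsilon(1+\delta)M$, with $(a_j/c_j)\in\ell^1$ as a by-product. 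Both estimates are in fact slightly sharper than what the statement demands, and the trivial case $M=0$ is correctly set aside.
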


Before proceeding, we shall provide a criterion of non-collapsing moduli of continuity $ \Sigma $, which will be used directly in Section \ref{subsec:3.3}, see \cite[Proposition 5]{PDEE22}.

\begin{Lemma}\label{Au:lem23}
  If a sequence of moduli of continuity $(\sigma_{1}^{k})_{k\in \mathbb{N}} $ is shored-up, then $ \Gamma := \cup_{k \in \mathbb{N}} \{\sigma_{1}^{k} \} $ is non-collapsing.
\end{Lemma}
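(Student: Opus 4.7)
The plan is to argue by contradiction. Suppose $\Gamma := \cup_{k \in \mathbb{N}} \{\sigma_1^k\}$ is not non-collapsing. Then there exist sequences $(f_k)_{k \in \mathbb{N}} \subset \Gamma$ and $(a_k)_{k \in \mathbb{N}} \subset (0,\infty)$ with $f_k(a_k) \to 0$ as $k \to \infty$, while $a_k \not\to 0$. Passing to a subsequence (still indexed by $k$ for simplicity), there exists $\delta > 0$ such that $a_k \geq \delta$ for every $k$. Moreover, by the definition of $\Gamma$, for each $k$ we may choose an index $m(k) \in \mathbb{N}$ such that $f_k = \sigma_1^{m(k)}$. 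By Definition \ref{Def23}, the shoring-up hypothesis provides a sequence $(c_j)_{j \in \mathbb{N}}$ with $c_j \to 0$ and a constant $\eta := \inf_j \sigma_1^j(c_j) > 0$.

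The next step is to split into two cases according to the behavior of $m(k)$. If the sequence $(m(k))_{k\in\mathbb{N}}$ admits a bounded subsequence, then by the pigeonhole principle some fixed index $m_0$ is attained along a further subsequence $k_\ell$, so that $f_{k_\ell} = \sigma_1^{m_0}$ for every $\ell$. Since $\sigma_1^{m_0}$ is a modulus of continuity (hence monotone non-decreasing with $\sigma_1^{m_0}(t) > 0$ for $t > 0$), we obtain
\begin{equation*}
f_{k_\ell}(a_{k_\ell}) = \sigma_1^{m_0}(a_{k_\ell}) \geq \sigma_1^{m_0}(\delta) > 0 ,
\end{equation*}
contradicting $f_k(a_k) \to 0$. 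If instead $m(k) \to \infty$ along a subsequence $k_\ell$, then $c_{m(k_\ell)} \to 0$, so eventually $c_{m(k_\ell)} \leq \delta \leq a_{k_\ell}$. Invoking the monotonicity of $\sigma_1^{m(k_\ell)}$ and the shoring-up bound,
\begin{equation*}
f_{k_\ell}(a_{k_\ell}) = \sigma_1^{m(k_\ell)}(a_{k_\ell}) \geq \sigma_1^{m(k_\ell)}(c_{m(k_\ell)}) \geq \eta > 0 ,
\end{equation*}
again a contradiction. Since every subsequence of $(m(k))$ falls into one of these two exhaustive alternatives, the contradiction is complete, and $a_k \to 0$ must hold.

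The expected main obstacle is purely a bookkeeping issue: one must carefully separate the case where the same modulus $\sigma_1^{m_0}$ is repeated infinitely often (where shoring-up is not directly useful, and one instead exploits the positivity of a single fixed modulus away from the origin) from the case $m(k) \to \infty$ (where the uniform lower bound $\eta$ from shoring-up can finally be applied, via $c_{m(k)} \to 0 \leq \delta$). Beyond this dichotomy, the argument only uses the defining property of a modulus of continuity—monotonicity and strict positivity on $(0, \infty)$—together with the shoring-up hypothesis, which is precisely what was constructed for this purpose.
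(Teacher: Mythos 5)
The paper does not include its own proof of this lemma; it cites \cite[Proposition 5]{PDEE22} and relies on that reference. Your argument is nonetheless correct and is the natural proof one would give. The contradiction setup, the extraction of a uniform lower bound $a_k \geq \delta > 0$ along a subsequence, and the split on the behavior of the index map $m(k)$ are all sound. The two alternatives — $(m(k))$ has a bounded subsequence, hence some fixed modulus $\sigma_1^{m_0}$ recurs infinitely often, versus $m(k)\to\infty$ — are indeed exhaustive for a sequence of natural numbers, and in each branch you correctly derive $f_{k_\ell}(a_{k_\ell}) \geq$ const $> 0$, contradicting $f_k(a_k)\to 0$. One point worth making explicit is that the first branch leans on the convention that a modulus of continuity is \emph{strictly} positive on $(0,\infty)$, so that $\sigma_1^{m_0}(\delta)>0$; this is the standard convention used in \cite{PDEE22} and is the one in force here (the $\sigma_1^k$ are built from the strictly increasing, invertible degeneracy rate $\sigma_1$), but since the shoring-up hypothesis alone cannot rule out $\sigma_1^{m_0}(\delta)=0$ (the single fixed number $c_{m_0}$ might exceed $\delta$), the appeal to strict positivity is genuinely load-bearing in that branch and deserves the brief mention you give it. Overall: correct, well-organized, and essentially the intended argument.
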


Finally, we close this section with two vital remarks.

\begin{remark}\label{RK:22}(Scaling properties)
In the proof of Theorem \ref{thm1}, we require
\begin{equation}\label{sec23:rk26}
 ||u||_{L^{\infty}(B_{1})} \leq 1 \ \ \text{and} \ \ ||f||_{L^{\infty}(B_{1})} \leq  \epsilon ,
\end{equation}
where $ \epsilon >0 $ is to be determined. The condition in (\ref{sec23:rk26}) is not restrictive. In effect, by setting
\begin{equation*}
v(x) = \frac{u(rx)}{K},
\end{equation*}
for $ 0 < r \leq 1 $ and $ K > 0 $ to be determined, simple calculations yield that
\begin{equation*}
  - \bigg( \widetilde{\sigma}_{1}(|Dv|)  + \widetilde{a}(x) \widetilde{\sigma}_{2}(|Dv|)    \bigg) \widetilde{\mathcal{I}}_{\tau}(v,x) = \widetilde{f}(x) \ \ \text{in} \ \ B_{1},
\end{equation*}
where
\begin{equation*}
 \widetilde{\sigma}_{i}(t) = \sigma_{i}\bigg(\frac{K}{r}t\bigg), \ \ i=1,2 ; \ \ \widetilde{\mathcal{I}}_{\tau}(v,x) = \frac{r^{\tau}}{K} \mathcal{I}_{\tau}(Kv(x),rx);
\end{equation*}
\begin{equation*}
\widetilde{f}(x) = \frac{r^{\tau}}{K} f(rx); \ \ \widetilde{a}(x)= a(rx).
\end{equation*}
Notice that
$$ \widetilde{\sigma}_{i}^{-1}(t) = \frac{r}{K} \sigma_{i}^{-1}(t), \ \ i=1,2,  $$
then by choosing $ r < K $, it easily follows that
\begin{equation*}
  \int_{0}^{1} \frac{\widetilde{\sigma}_{2}^{-1}(t)}{t} dt \leq  \int_{0}^{1} \frac{\sigma_{2}^{-1}(t)}{t} dt  < \infty,  \ \ \widetilde{\sigma}_{1}(1)=\sigma_{1}\bigg(\frac{K}{r}\bigg) \geq \sigma_{1}(1) \geq \sigma_{2}(1) \geq 1,
\end{equation*}
and
\begin{equation*}
  0 \leq \widetilde{a}(x) \in C(B_{1}).
\end{equation*}
Hence $ v, \widetilde{f}, \widetilde{a}, \widetilde{\sigma}_{i}, i=1,2  $ meet assumptions \hyperref[A2]{\bf (A2)}, \hyperref[A3]{\bf (A3)}, \hyperref[A4]{\bf (A4)} and \hyperref[A6a]{\bf (A6a)}, \hyperref[A7]{\bf (A7)}, respectively. Clearly, $ \widetilde{I}_{\tau} $ satisfies \hyperref[A5]{\bf (A5)}. Finally, by choosing
\begin{equation*}
 r:= \epsilon \ \ \text{and} \ \ K:= ||u||_{L^{\infty}(B_{1})} + ||f||_{L^{\infty}(B_{1})},
\end{equation*}
we can assume (\ref{sec23:rk26}) without loss of generality.

We say that $ u \in C(B_{1}) $ is a normalized solution if $ ||u||_{L^{\infty}(B_{1})} \leq 1$ holds in the proof of Theorem \ref{thm1}.
\end{remark}

\begin{remark}\label{Rk26}
Dini condition can also be characterized in terms of the summability of $ \sigma_{2}^{-1} $ evaluated along geometric sequences, i.e., $ \sigma_{2}^{-1} $ satisfies the Dini condition if and only if
\begin{equation*}
\sum_{k=1}^{\infty} \sigma_{2}^{-1}(\theta^{k}) < \infty ,
\end{equation*}
for every $ \theta \in (0,1) $. This elementary quality is frequently used in the proof of Theorem \ref{thm1}.
\end{remark}

\vspace{3mm}

\section{Borderline regularity in the degenerate case}\label{Section 3}
In this section, we present the proof of Theorem \ref{thm1}. Our strategy consists of three steps. In Section \ref{The:3.1}, we prove a Lipschitz regularity result of solution to the perturbed equation using Lemma \ref{lem2.3} for large and small slopes. In Section \ref{The:3.2}, we establish an approximation result. In Section \ref{subsec:3.3}, we utilize approximation lemma to construct a sequence of approximating hyperplanes.                                                                                                      	
\subsection{Compactness of perturbed PDEs}\label{The:3.1}

\begin{Proposition}\label{Prop3.1}
Let $ u \in C^{0}(\overline{B}_{1}) $ be a normalized viscosity solution to
\begin{equation*}\label{The:eq1}
  - \bigg( \sigma_{1}(|Du+\xi|)  + a(x) \sigma_{2}(|Du+\xi|)  \bigg) \mathcal{I}_{\tau}(u,x) = f(x) \ \ \text{in} \ \ B_{1},
\end{equation*}
where $ \xi \in \mathbb{R}^{n}  $ is arbitrary, assume \hyperref[A1]{\bf (A1)}--\hyperref[A4]{\bf (A4)} and \hyperref[A6a]{\bf (A6a)} hold. Then $ u $ is locally Lipschitz continuous in $ B_{1} $, i.e., for every $ x,y \in B_{1/2} $,
\begin{equation*}
 |u(x)-u(y)| \leq C |x-y|,
\end{equation*}
where $ C > 0 $ is a universal constant.
\end{Proposition}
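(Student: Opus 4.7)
The plan is to run an Ishii--Lions doubling of variables argument combined with the non-local Jensen--Ishii principle recorded as Lemma \ref{lem2.3}. For fixed base points $x_0, y_0 \in B_{1/2}$ and parameters $L \gg M \gg 1$ to be tuned, I introduce the auxiliary function
$$
\Phi(x,y) := u(x) - u(y) - L\phi(|x-y|) - M\bigl(|x-x_0|^2 + |y-y_0|^2\bigr),
$$
with $\phi \colon [0,\infty) \to [0,\infty)$ a smooth, strictly concave increasing modulus satisfying $\phi(0)=0$ and $\phi'(0^+)=1$, for instance $\phi(t)=t-\rho\,t^{1+\theta}$ for $\rho, \theta > 0$ small enough that $\phi$ is positive and strictly concave on $[0,2]$. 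The objective is to show that $\Phi \le 0$ on $\overline{B}_1 \times \overline{B}_1$ for a universal choice of $L$ (depending only on $n, \tau, \lambda, \Lambda, \sigma_1, \sigma_2, \|u\|_{L^\infty}, \|f\|_{L^\infty}$) and $M$ large enough to localize; evaluating $\Phi$ at $(x_0,y_0)$ and at $(y_0,x_0)$ then delivers the Lipschitz estimate with $C=L$.

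I argue by contradiction: suppose $\max \Phi > 0$, attained at an interior point $(\bar x, \bar y)$ with $\bar x \ne \bar y$ (interiority follows from taking $M$ large compared with $\|u\|_{L^\infty}$, while $L \phi(|\bar x-\bar y|) \le 2\|u\|_{L^\infty}$ forces $|\bar x-\bar y|\to 0$ as $L\to\infty$). Applying Lemma \ref{lem2.3} to the test function $\varphi(x,y)=L\phi(|x-y|)+M(|x-x_0|^2+|y-y_0|^2)$ produces matrices $X,Y\in\mathbb{S}^n$ together with the viscosity inequalities
\begin{align*}
-\bigl(\sigma_1(|p+\xi|)+a(\bar x)\sigma_2(|p+\xi|)\bigr)\,\mathcal{I}_{\tau}(u,\bar x)&\le f(\bar x),\\
-\bigl(\sigma_1(|q+\xi|)+a(\bar y)\sigma_2(|q+\xi|)\bigr)\,\mathcal{I}_{\tau}(u,\bar y)&\ge f(\bar y),
\end{align*}
where $p = L\phi'(|\bar x-\bar y|)e+2M(\bar x-x_0)$, $q = L\phi'(|\bar x-\bar y|)e-2M(\bar y-y_0)$ and $e=(\bar x-\bar y)/|\bar x-\bar y|$. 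The strict concavity of $\phi$, combined with the kernel bounds \eqref{bd for K}, delivers a quantitatively negative contribution to $\mathcal{I}_{\tau}(u,\bar x)-\mathcal{I}_{\tau}(u,\bar y)$, in the spirit of the classical Ishii--Lions calculation adapted to the non-local setting \cite{BCCI12,CL13,DE21}.

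The contradiction is closed through a dichotomy on the effective slopes $|p+\xi|$ and $|q+\xi|$. In the \textbf{large-slope regime} $|p+\xi|,|q+\xi|\ge 1$, assumption \hyperref[A6a]{\bf (A6a)} together with the normalization $\sigma_1(1)\ge 1$ yields $\sigma_1+a\sigma_2 \ge 1$, so dividing the viscosity inequalities gives $|\mathcal{I}_{\tau}(u,\bar x)|, |\mathcal{I}_{\tau}(u,\bar y)|\le \|f\|_{L^\infty}$; combining this bound with the negative Ishii--Lions contribution contradicts the inequalities once $L$ is chosen large enough. The main obstacle is the \textbf{small-slope regime} $|p+\xi|<1$ or $|q+\xi|<1$, where the prefactor $\sigma_1+a\sigma_2$ may collapse and direct division is unavailable. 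Since $|p-q|=O(M)$ while $|p+q|\asymp 2L$, the simultaneous smallness of both effective slopes forces $|\xi|\asymp L$; I would overcome this by leveraging the kernel symmetry $\int z\,K_{ij}(z)\,dz = 0$ (in principal value), which makes $\mathcal{I}_{\tau}$ invariant under the shift $\tilde u(x)=u(x)+\xi\cdot x$ and thereby reduces the small-slope case to the $\xi=0$ problem, to which the large-slope analysis applies after a suitable re-tuning of $\phi$ and $L$, following the blueprint of \cite{BCCI12}.
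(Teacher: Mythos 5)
Your overall strategy --- Ishii--Lions doubling of variables, the non-local Jensen--Ishii principle (Lemma \ref{lem2.3}), and a dichotomy driven by the degeneracy of the prefactor $\sigma_1 + a\sigma_2$ --- does match the paper's intended route (the paper omits the proof and points to \cite[Lemma 3.1]{PDM24}; its own Lemma \ref{Sec6:lem1} in Section 6 carries out exactly this kind of argument in the normalized non-local $p$-Laplacian setting). Your large-slope analysis is essentially correct: on $\{|p+\xi|,|q+\xi|\ge 1\}$, monotonicity and $\sigma_1(1)\ge 1$ force the prefactor $\ge 1$, the one-sided viscosity inequalities bound $\mathcal{I}_\tau(\cdot,\bar x)-\mathcal{I}_\tau(\cdot,\bar y)\ge -2\|f\|_\infty$, and the concavity of $\phi$ plus the kernel bounds \eqref{bd for K} supply a strictly negative Ishii--Lions contribution that contradicts this once $L$ is large.

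The small-slope step, however, does not close. You observe (correctly) that simultaneous smallness of $|p+\xi|$ and $|q+\xi|$ forces $|\xi|\asymp L$, and then propose to shift to $\tilde u(x)=u(x)+\xi\cdot x$, use the translation invariance $\mathcal I_\tau(\tilde u,\cdot)=\mathcal I_\tau(u,\cdot)$, and apply the large-slope analysis to the $\xi=0$ equation for $\tilde u$. Two things go wrong. First, establishing a universal Lipschitz bound for $\tilde u$ only gives $\mathrm{Lip}(u)\le \mathrm{Lip}(\tilde u)+|\xi|$, and $|\xi|\asymp L$ in precisely this regime --- so the bound on $u$ is not $\xi$-independent, which is circular. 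Second, and more to the point, $\tilde u$ is not in the large-slope regime: at the extremum of the doubled functional the effective gradient of $\tilde u$ is $p+\xi$ (resp.\ $q+\xi$), which is exactly the small quantity you started with, so there is no re-tuning of $\phi$ and $L$ that puts the $\tilde u$ problem in the large-slope case. The paper's own version of this argument (see the proof of Lemma \ref{Sec6:lem1}) dichotomizes differently --- on the size of $|\xi|$ itself, \emph{before} doubling. For $|\xi|\ge A_0$ with $A_0\gtrsim L+M$, one automatically has $|p+\xi|,|q+\xi|\ge |\xi|-|p|\gtrsim L$, so the Lipschitz barrier closes. For $|\xi|< A_0$, one must replace the Lipschitz barrier $\phi(t)\approx t$ by a H\"older barrier $\phi(t)=t^\beta$, whose derivative blows up near $t=0$ and hence keeps the effective slopes large regardless of $\xi$; this produces only an $\xi$-uniform $C^{0,\beta}$ bound in that case. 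That weaker conclusion is all that is actually used downstream (Arzel\`a--Ascoli in Proposition \ref{prop:3.2}), but your shift argument as stated does not produce it, and you should adopt the $|\xi|$-dichotomy together with a barrier switch in the small-$|\xi|$ case.
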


This proposition can be proved similar to \cite[Lemma 3.1]{PDM24} by using a doubling variable method and Lemma \ref{lem2.3}. For the sake of brevity, we omit its proof.

\subsection{Approximation lemma}\label{The:3.2}

\begin{Proposition}\label{prop:3.2}
Let $ u \in C^{0}(\overline{B}_{1}) $ be a normalized viscosity solution to
\begin{equation*}
  - \bigg( \sigma_{1}(|Du+\xi|)  + a(x) \sigma_{2}(|Du+\xi|)    \bigg) \mathcal{I}_{\tau}(u,x) = f(x) \ \ \text{in} \ \ B_{1},
\end{equation*}
where $ \xi \in \mathbb{R}^{n}  $ is arbitrary, assume \hyperref[A1]{\bf (A1)}--\hyperref[A6a]{\bf (A6a)} and \hyperref[A8]{\bf (A8)} hold. For any given $ \mathcal{M}, \delta >0 $, there exists $ \epsilon >0 $ such that if
\begin{equation*}
|u(x)| \leq \mathcal{M}(1+|x|^{1+\alpha_{0}}), \ \ \forall x \in \mathbb{R}^{n},
\end{equation*}
and
\begin{equation*}
|\tau -2| + ||f||_{L^{\infty}(B_{1})} \leq \epsilon,
\end{equation*}
then there exists a function $ h \in C^{1,\alpha_{0}}_{loc}(B_{1}) $ satisfying
\begin{equation*}
||u-h||_{L^{\infty}(B_{3/4})} \leq \delta,
\end{equation*}
where $ \alpha_{0} $ is the optimal H\"{o}lder exponent to the gradient for solutions of $ \mathcal{I}_{\infty} $ harmonic function.
\end{Proposition}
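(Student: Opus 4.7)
The plan is a compactness-and-contradiction argument that exploits the convergence of the nonlocal operator to a second-order Isaacs operator as $\tau \to 2^-$, together with the cancellation of the prefactor via the non-collapsing hypothesis \hyperref[A8]{\bf (A8)}. The argument has three stages: extract a limit from the approximating sequence, pass to the limit in the equation, and use the known $C^{1,\alpha_0}$ regularity for uniformly elliptic second-order operators.

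\emph{Step 1: Compactness.} Suppose by contradiction that the conclusion fails. There exist $\delta_0 > 0$ and sequences $\tau_j \to 2^-$, $\xi_j \in \mathbb{R}^n$, kernel families $\{K^{(j)}_{ij}\}$ compatible with \hyperref[A5]{\bf (A5)}, and $f_j$ with $\|f_j\|_{L^\infty(B_1)} \to 0$, together with normalized viscosity solutions $u_j$ of the $\xi_j$-perturbed equation, satisfying the polynomial growth bound, such that
\[ \|u_j - h\|_{L^\infty(B_{3/4})} > \delta_0 \quad \text{for every } h \in C^{1,\alpha_0}_{loc}(B_1). \]
Proposition \ref{Prop3.1} furnishes a uniform local Lipschitz estimate on each $u_j$, so Arzelà--Ascoli extracts (along a subsequence) a locally uniform limit $u_\infty \in C^{0,1}_{loc}(B_1)$. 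The growth bound transfers to $u_\infty$ and, via dominated convergence on dyadic annuli, upgrades to convergence in $L^1_\tau(\mathbb{R}^n)$, which is the tail control needed for nonlocal stability.

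\emph{Step 2: Passing to the limit.} This is the crux. Splitting on whether $|\xi_j|$ is bounded or not, one extracts $\xi_j \to \xi_\infty$ in the bounded case, or uses the monotonicity and normalization in \hyperref[A6a]{\bf (A6a)} to force $\sigma_1(|Du_j+\xi_j|)+a(x)\sigma_2(|Du_j+\xi_j|) \to \infty$ in the unbounded case. For a smooth test function $\varphi$ touching $u_\infty$ at $x_0$, assumption \hyperref[A5]{\bf (A5)} together with the $L^1_\tau$ tail control forces
\[ \mathcal{I}_{\tau_j}(\varphi,x_0) \longrightarrow F_\infty(D^2\varphi(x_0)) := \inf_{i}\sup_{j} k_{ij}\,\mathrm{Tr}(D^2\varphi(x_0)), \]
and the prefactor converges by continuity of $\sigma_1,\sigma_2, a$. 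Stability of viscosity solutions then yields
\[ -\bigl(\sigma_1(|Du_\infty+\xi_\infty|) + a(x)\sigma_2(|Du_\infty+\xi_\infty|)\bigr) F_\infty(D^2 u_\infty) = 0 \quad \text{in } B_1. \]
At this point the non-collapsing property \hyperref[A8]{\bf (A8)} is essential: it prevents the prefactor from degenerating to zero along the limit, so one may cancel it (with the standard Caffarelli--Silvestre adjustment at contact points where the limit gradient is critical, treated by perturbing the test function so that its gradient avoids the singular value) and conclude that $u_\infty$ is a viscosity solution of the purely second-order Isaacs equation $F_\infty(D^2 u_\infty) = 0$ in $B_1$.

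\emph{Step 3: Regularity of the limit and contradiction.} Since $F_\infty$ is a uniformly elliptic, translation-invariant second-order Isaacs operator, the Caffarelli--Silvestre $C^{1,\alpha_0}$ theory yields $u_\infty \in C^{1,\alpha_0}_{loc}(B_1)$ with the universal exponent $\alpha_0$ stated in the proposition. Choosing $h = u_\infty$ contradicts $\|u_j - u_\infty\|_{L^\infty(B_{3/4})} > \delta_0$ for large $j$, which in turn contradicts the local uniform convergence from Step 1.

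The main obstacle is Step 2. Two difficulties interact: first, the nonlocal-to-local passage of $\mathcal{I}_{\tau_j}$ requires uniform tail estimates (hence the $L^1_\tau$ control and \hyperref[A5]{\bf (A5)}); second, the double degeneracy allows the prefactor $\sigma_1+a\sigma_2$ to vanish in the limit, and without \hyperref[A8]{\bf (A8)} the resulting equation would reduce to the trivial identity $0=0$, yielding no information on $u_\infty$. The non-collapsing hypothesis is precisely the ingredient that rescues the cancellation and unlocks the use of the uniformly elliptic regularity theory.
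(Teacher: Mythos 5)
Your proposal follows the same overall skeleton as the paper's proof: contradiction $+$ Lipschitz compactness from Proposition \ref{Prop3.1} $+$ passage to the second-order Isaacs limit via \hyperref[A5]{\bf (A5)} $+$ Krylov--Safonov ($C^{1,\alpha_0}$) regularity for $\mathcal{I}_\infty$. However, there is a conceptual misalignment in Step 2 that keeps it from matching the paper's argument.

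The statement of Proposition \ref{prop:3.2} nominally fixes $\sigma_1$, $\sigma_2$, and $a$, but the contradiction argument in the paper actually introduces \emph{sequences} $\{\sigma_1^k\}$, $\{\sigma_2^k\}$, $\{a_k\}$ drawn from the non-collapsing family $\Sigma$; this is essential because when the lemma is fed into the iteration of Proposition \ref{Prop:34}, the rescaled moduli $\sigma_1^k(t,x)$ in \eqref{loc:eq7} change at every step, and the smallness threshold $\epsilon$ must be uniform across that whole family. You instead treat the moduli as fixed, so when you reach the bounded-slope case with $|b+\xi_\infty|>0$, the monotonicity in \hyperref[A6a]{\bf (A6a)} alone already gives a strictly positive lower bound $\sigma_1(|b+\xi_k|)\ge \sigma_1(\tfrac12|b+\xi_0|)>0$; under your reading \hyperref[A8]{\bf (A8)} is never actually invoked. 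In the paper's setting, \hyperref[A8]{\bf (A8)} is exactly what rules out $\sigma_1^k(|b+\xi_k|)+a_k\sigma_2^k(|b+\xi_k|)\to 0$ even though the arguments $|b+\xi_k|$ stay bounded away from zero, which is a genuinely distinct mechanism from the critical-point degeneracy ($|b+\xi_0|=0$). You have conflated these two phenomena: the critical-point case is handled by the Imbert--Silvestre perturbation of the test function (and the paper defers to \cite[Lemma~3.5]{PDM24}), while the non-collapse of the modulus sequence is the sole content of \hyperref[A8]{\bf (A8)}.

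Two smaller inaccuracies: you claim \hyperref[A6a]{\bf (A6a)} forces the prefactor to blow up when $|\xi_j|\to\infty$; it does not — monotonicity and the normalization only give $\sigma_1(|b+\xi_j|)\ge\sigma_1(1)\ge 1$, which is what the paper uses and is all that is needed (the right-hand side $f_j/(\text{prefactor})\to 0$ since $\|f_j\|_\infty\to 0$). You also write the intermediate limit equation as $-\bigl(\sigma_1(|Du_\infty+\xi_\infty|)+a\sigma_2(|Du_\infty+\xi_\infty|)\bigr)F_\infty(D^2u_\infty)=0$ as if the prefactor were evaluated at the (possibly nonexistent) gradient of $u_\infty$; in the viscosity formulation the prefactor should be evaluated at the test-function slope $b$, and the correct output is the pair of one-sided inequalities $\mathcal{I}_\infty(M)\le 0$ (and the symmetric one), as the paper records.
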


\begin{proof}
We argue by contradiction. For ease of presentation, we split the proof into five steps.

{\bf Step 1}. Suppose on the contrary that the result does not hold, then there exist the sequences $ \{\sigma_{1}^{k}\}_{k} $, $ \{\sigma_{2}^{k}\}_{k} $, $ \{u_{k}\}_{k} $, $ \{\tau_{k}\}_{k} $, $ \{\xi_{k}\}_{k} $, $ \{f_{k}\}_{k} $, $ \{a_{k}\}_{k}$, $ \{ \mathcal{I}_{\tau_{k}}\}_{k}$ and the numbers $ \mathcal{M}_{0}, \delta_{0} >0 $ such that

1. $ \{u_{k}\} $ is a normalized viscosity solution to
\begin{equation*}
 - \bigg( \sigma_{1}^{k}(|Du_{k}+\xi_{k}|)  + a_{k}(x) \sigma_{2}^{k}(|Du_{k}+\xi_{k}|)    \bigg) \mathcal{I}_{\tau_{k}}(u_{k},x) = f_{k}(x)  \ \ \text{in} \ \ B_{1};
\end{equation*}

2. \begin{equation*}
 |u_{k}(x)| \leq \mathcal{M}_{0}(1+|x|^{1+\alpha_{0}});
\end{equation*}

3. \begin{equation*}
 |\tau_{k}-2| + ||f_{k}||_{L^{\infty}(B_{1})} \leq \frac{1}{k};
\end{equation*}

4. The function $ \sigma_{i}^{k}(1) \geq 1  $, $ i=1,2 $, and if $ \sigma_{1}^{k}(b_{k}) + a_{k}(x) \sigma_{2}^{k}(b_{k})  \rightarrow 0   $, then $ b_{k} \rightarrow 0 $;

5. We have
\begin{equation}\label{se3:eq314}
||u_{k}-h||_{L^{\infty}(B_{3/4})} \geq   \delta_{0} >0,
\end{equation} for every $ k \in \mathbb{N} $ and any $ h \in C^{1,\alpha_{0}}_{loc}(B_{1}) $.

{\bf Step 2}. Since $ \tau_{k} \rightarrow 2 $ and \hyperref[A5]{\bf (A5)}, we have that $ \mathcal{I}_{\tau_{k}} \rightarrow \mathcal{I}_{\infty}   $, where $ \mathcal{I}_{\infty} $ is a uniformly elliptic operator. Moreover, in terms of Step 1 and Proposition \ref{Prop3.1}, there exists a subsequence, still denoted by $ \{u_{k}\}_{k\in \mathbb{N}}   $, converging uniformly to some $ u_{\infty} $ in $ B_{1/4} $. Now we want to show $ u_{\infty} $ solves
\begin{equation*}
  \mathcal{I}_{\infty}(D^{2}u_{\infty}) = 0,
\end{equation*}
in the viscosity sense. We only prove a supersolution property, because its subsolutions counterpart is entirely analogous. For the function $ p(x) $ defined by
 \begin{equation*}
   p(x):= \frac{1}{2}(x-y)^{T} M(x-y) + b\cdot (x-y) + u_{\infty}(y),
 \end{equation*}
we assume $ p $ touches $ u_{\infty} $ from below at $ y \in B_{1}$. Since $ u_{k} \rightarrow u_{\infty} $ in $ B_{1/4} $, we consider that $ p_{k} $ touches $ u_{k} $ from below at some $ x_{k} $ and $ x_{k} \rightarrow y $, where
\begin{equation*}
  p_{k}(x) := \frac{1}{2}(x-x_{k})^{T} M(x-x_{k}) + b\cdot (x-x_{k}) + u_{\infty}(x_{k}).
\end{equation*}
Then we have 
\begin{equation}\label{se3:eq34}
 - \bigg( \sigma_{1}^{k}(|b+\xi_{k}|)  + a_{k}(x_{k}) \sigma_{2}^{k}(|b+\xi_{k}|)    \bigg) \mathcal{I}_{\delta}(u_{k},p_{k},x_{k}) \geq f_{k}(x_{k}).
\end{equation}
The proof is completed if we verify $ \mathcal{I}_{\infty}(M) \leq 0 $.

{\bf Step 3}. Suppose that $ \{\xi_{k}\}_{k} $ does not admit convergence subsequence, namely, $ |\xi_{k}|\rightarrow \infty $ as $ k \rightarrow \infty $, then there exists a $ N > 0 $ such that
\begin{equation*}
  |b+\xi_{k}| \geq 1,
\end{equation*}
as $ k > N $. From \hyperref[A4]{\bf (A4)}, \hyperref[A6a]{\bf (A6a)} and (\ref{se3:eq34}), we obtain 
\begin{equation*}
 -\mathcal{I}_{\delta}(u_{k},p_{k},x_{k}) \geq - \frac{|f_{k}(x_{k})|}{\sigma_{1}^{k}(|b+\xi_{k}|)} \geq -\frac{1}{k}.
\end{equation*}
By letting $ k \rightarrow \infty $, we get $ \mathcal{I}_{\infty}(M) \leq 0 $.

{\bf Step 4}. Suppose that $ \{\xi_{k}\}_{k} $ admits convergence subsequence. Without loss of generality, there exists $ \xi_{0} \in \mathbb{R}^{n}  $ such that $ \xi_{k} \rightarrow  \xi_{0}   $. Then one can find $ N >0 $ such that
\begin{equation*}
  |b+\xi_{k}| \geq \frac{1}{2} |b+\xi_{0}|>0,
\end{equation*}
for $ k > N $. By \hyperref[A8]{\bf (A8)}, we know $ \sigma_{1}(|b+\xi_{k}|) +a_{k}(x)\sigma_{2}(|b+\xi_{k}|) \nrightarrow 0 $. Then using \hyperref[A4]{\bf (A4)}, \hyperref[A6a]{\bf (A6a)} and (\ref{se3:eq34}) again, we get 
\begin{equation*}
 -\mathcal{I}_{\delta}(u_{k},p_{k},x_{k})  \geq -\frac{1}{k\sigma_{1}^{k}(\frac{1}{2}|b+\xi_{0}|)}.
\end{equation*}
By letting $ k \rightarrow \infty $, we get $ \mathcal{I}_{\infty}(M) \leq 0 $ again. For the remaining case $ |b+\xi_{0}| =0  $, the proof is analogous to     \cite[Lemma 3.5]{PDM24} and \cite[Proposition 9]{PS24}, we omit it here.

{\bf Step 5}. Since $ \mathcal{I}_{\infty}(D^{2}u_{\infty}) = 0 $, then as a consequence of Krylov-Safonov regularity theory for uniformly elliptic operator, see \cite[Corollary 5.7]{CC95}, we obtain that $ u_{\infty}\in C^{1,\alpha_{0}}_{loc}(B_{1}) $. By taking $ h = u_{\infty} $, this contradicts with (\ref{se3:eq314}).
We complete the proof of Proposition \ref{prop:3.2}.
\end{proof}

\subsection{Existence of approximating hyperplanes}\label{subsec:3.3}

We shall define appropriate moduli of continuity to ensure $ \mu_{1} > r^{\tau-1}$. We start by introducing $ \gamma (t)= t \sigma_{1}(t)  $. Because $ t \mapsto t \sigma_{1}(t)  $ is a bijective map, it has an inverse. Let $ \omega (t) = \gamma^{-1}(t)   $. We examine the choice of $ \mu_{1} $ by $ \omega (t) $ as follows.

Because $ \tau $ is sufficiently close to $ 2 $, this allows us to examine the case $ \tau > 1 + \alpha_{0} $.
Suppose first
 $$ \frac{t^{\alpha_{0}}}{\omega(t)} \rightarrow 0, $$
then choose small $ 0 < r < \frac{1}{2} $ such that
\begin{equation*}
2Lr^{\alpha_{0}} := \mu_{1} > r^{\tau -1}.
\end{equation*}
On the contrary, suppose
\begin{equation*}
  \frac{t^{\alpha_{0}}}{\omega(t)} \rightarrow M_{1},
\end{equation*}
for some constant $ M_{1} >0 $, then we select small $ 0 < r < \frac{1}{2} $ and any $ 0<\alpha < \alpha_{0} $ such that
\begin{equation*}
  2Lr^{\alpha_{0}} = r^{\alpha} := \mu_{1} > r^{\tau-1},
\end{equation*}
where we have used $ 0< \alpha < \alpha_{0} < \tau -1  $.

In what follows, we proceed by setting $  0< \theta :=   \frac{r^{\tau-1}}{\mu_{1}} <1   $ and considering the sequence
\begin{equation*}
(a_{k})_{k\in \mathbb{N}} := (\sigma_{1}^{-1}(\theta^{k}))_{k\in\mathbb{N}}.
\end{equation*}
Under the assumptions \hyperref[A6a]{\bf (A6a)} and \hyperref[A7]{\bf (A7)}, the inverse $ \sigma_{1}^{-1} $ is Dini-continuous. By Remark \ref{Rk26}, the sequence $(\sigma_{1}^{-1}(\theta^{k}))_{k\in\mathbb{N}} $ is summable, and $ (a_{k})_{k\in \mathbb{N}} \in \ell^{1} $.
Now we utilize Lemma \ref{Au:prop1}, there exists a sequence $ (c_{k})_{k\in \mathbb{N}} \in c_{0}$ such that
\begin{equation}
\frac{7}{10} \sum_{k=1}^{\infty} \sigma_{1}^{-1}(\theta^{k}) \leq \sum_{k=1}^{\infty} \frac{\sigma_{1}^{-1}(\theta^{k})}{c_{k}}  \leq \sum_{k=1}^{\infty} \sigma_{1}^{-1}(\theta^{k}).
\end{equation}

Finally, we design a sequence of moduli of continuity $ (\sigma_{1}^{k}(\cdot,x))_{k\in \mathbb{N}}  $ given by
\begin{equation}\label{loc:eq7}
\left\{
     \begin{aligned}
     &  \sigma_{1}^{0}(t,x) := \sigma_{1}(t) + a(x) \sigma_{2}(t) ,        \\
     &   \sigma_{1}^{1}(t,x) := \frac{\mu_{1}}{r^{\tau -1}} \bigg [ \sigma_{1}(\mu_{1}t) + a(rx) \sigma_{2}(\mu_{1}t) \bigg ] ,          \\
     & \sigma_{1}^{2}(t,x) :=  \frac{\mu_{1}\mu_{2}}{r^{2(\tau -1)}} \bigg [ \sigma_{1}(\mu_{1}\mu_{2}t) + a(r^{2}x) \sigma_{2}(\mu_{1}\mu_{2}t) \bigg ]           ,  \\
     & \cdots\cdots \cdots  \\
     &  \sigma_{1}^{k}(t,x) := \frac{\mu_{1}\mu_{2}\cdots \mu_{k}}{r^{k(\tau -1)}} \bigg [ \sigma_{1}(\mu_{1}\mu_{2}\cdots \mu_{k}t) + a(r^{k}x) \sigma_{2}(\mu_{1}\mu_{2}\cdots \mu_{k}t) \bigg ],
     \end{aligned}
     \right.
\end{equation}
with $ \mu_{1} > r^{\tau -1}    $ as defined and $ (\mu_{k})_{k \in \mathbb{N}}  $ determined as follows. If
\begin{equation*}
\frac{\prod_{i=1}^{k}\mu_{i}}{r^{k(\tau -1)}}\sigma_{1} \bigg(\prod_{i=1}^{k}\mu_{i} c_{k} \bigg) \geq 1,
\end{equation*}
then $ \mu_{k} = \mu_{k-1}    $. Otherwise $ \mu_{k-1} < \mu_{k} <1   $ is chosen to ensure
\begin{equation*}
\frac{\prod_{i=1}^{k}\mu_{i}}{r^{k(\tau -1)}}\sigma_{1} \bigg(\prod_{i=1}^{k}\mu_{i} c_{k} \bigg) = 1.
\end{equation*}

It can easily be seen that a sequence of moduli of continuity $ (\sigma_{1}^{k}(t,x))_{k \in \mathbb{N}} $ is {\it shored-up}, and using Lemma \ref{Au:lem23}, it follows that
$$ \Sigma = \big\{\sigma_{1}^{0}(t,x), \sigma_{1}^{1}(t,x), \cdots, \sigma_{1}^{k}(t,x), \cdots\big\}     $$
is non-collapsing.

\begin{Proposition}\label{prop3.3}
Let $ u \in C^{0}(\overline{B}_{1}) $ be a normalized viscosity solution to
\begin{equation*}
  - \bigg( \sigma_{1}(|Du+\xi|)  + a(x) \sigma_{2}(|Du+\xi|)    \bigg) \mathcal{I}_{\tau }(u,x) = f(x) \ \ \text{in} \ \ B_{1},
\end{equation*}
where $ \xi \in \mathbb{R}^{n}  $ is arbitrary, assume \hyperref[A1]{\bf (A1)}--\hyperref[A6a]{\bf (A6a)} and \hyperref[A7]{\bf (A7)} hold. Given $ \mathcal{M}, \delta >0 $, there exists $ \epsilon >0 $ such that if
\begin{equation*}
|u(x)| \leq \mathcal{M}(1+|x|^{1+\alpha_{0}}), \ \ \forall x \in \mathbb{R}^{n},
\end{equation*}
and
\begin{equation*}
|\tau -2| + ||f||_{L^{\infty}(B_{1})} \leq \epsilon,
\end{equation*}
then there exist an affine function $ l(x) = a + b \cdot x $ and a universal constant $ C >0 $ such that
\begin{equation*}
 |a|+|b|\leq C, \ \ \text{and} \ \ \sup_{x \in B_{r}} |u(x)-l(x)|\leq \mu_{1} r.
\end{equation*}
\end{Proposition}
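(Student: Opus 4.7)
The plan is to prove Proposition \ref{prop3.3} by combining the approximation lemma (Proposition \ref{prop:3.2}) with the interior $C^{1,\alpha_0}$ estimates for the limiting translation-invariant operator $\mathcal{I}_{\infty}$, and then absorbing the approximation error into the correct power of $r$ dictated by the definition of $\mu_1$. Since $r$ and $\mu_1$ are already fixed by the construction preceding the statement (with $\mu_1 \geq 2L r^{\alpha_0}$ in both alternatives, where $L$ is the universal $C^{1,\alpha_0}$ constant for $\mathcal{I}_{\infty}$-harmonic functions), the only free parameter is the proximity $\delta$ in the approximation lemma, which in turn dictates how small $\epsilon$ must be.

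First, I would invoke Proposition \ref{prop:3.2} with a proximity parameter $\delta_0 := L r^{1+\alpha_0}$ (depending only on $n, \lambda, \Lambda, \alpha_0, r$) to obtain $\epsilon > 0$ such that, whenever $|\tau - 2| + \|f\|_{L^{\infty}(B_1)} \leq \epsilon$, one can find $h \in C^{1,\alpha_0}_{\mathrm{loc}}(B_1)$ with
\[
\|u - h\|_{L^{\infty}(B_{3/4})} \leq \delta_0.
\]
Since $u$ is normalized ($\|u\|_{L^{\infty}(B_1)} \leq 1$) and $\delta_0 \leq 1$, we have $\|h\|_{L^{\infty}(B_{3/4})} \leq 2$. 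The interior gradient estimate of Krylov--Safonov type for $\mathcal{I}_{\infty}$-harmonic functions (as used in the proof of Proposition \ref{prop:3.2}, see \cite[Corollary 5.7]{CC95}) then yields a universal constant $L > 0$ such that
\[
|h(0)| + |Dh(0)| \leq L, \qquad |h(x) - h(0) - Dh(0)\cdot x| \leq L |x|^{1+\alpha_0} \quad \text{for } x \in B_{1/2}.
\]

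Next, I would set $l(x) := h(0) + Dh(0)\cdot x$, so $a := h(0)$ and $b := Dh(0)$ satisfy $|a| + |b| \leq C$ for the universal constant $C = 2L$. Combining the two bounds by the triangle inequality, for every $x \in B_r$,
\[
|u(x) - l(x)| \leq |u(x) - h(x)| + |h(x) - l(x)| \leq \delta_0 + L r^{1+\alpha_0} \leq 2L r^{1+\alpha_0} \leq \mu_1 r,
\]
where the last inequality uses the choice $\mu_1 \geq 2 L r^{\alpha_0}$ made in the construction of the shored-up sequence (both the case $t^{\alpha_0}/\omega(t) \to 0$ and the case $t^{\alpha_0}/\omega(t) \to M_1$ were designed with this quantitative constraint in mind, and in the second case $r^\alpha \geq 2Lr^{\alpha_0}$ for $r$ small since $\alpha < \alpha_0$). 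This completes the proof.

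The main obstacle, and the reason the statement is placed after the elaborate construction of $\mu_1$, $\theta$, $(c_k)$ and the non-collapsing family $\Sigma$, is not this first step itself but the need to make the choice of $\mu_1$ \emph{compatible} with the subsequent iteration: one must be able to rescale and reapply the proposition at every scale while keeping $\sigma_1^k + a_k(\cdot)\sigma_2^k$ inside $\Sigma$. Thus the only genuinely delicate point in the present argument is checking that the $\delta_0$ required here is indeed admissible in Proposition \ref{prop:3.2} under the current assumptions \hyperref[A6a]{\bf (A6a)}--\hyperref[A7]{\bf (A7)} (rather than the stronger \hyperref[A8]{\bf (A8)}); this is fine because at the base step the hypothesis $\sigma_1 + a(\cdot)\sigma_2 = \sigma_1^0 \in \Sigma$ is automatic from the construction \eqref{loc:eq7}, so Proposition \ref{prop:3.2} applies verbatim.
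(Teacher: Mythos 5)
Your proof is correct and follows essentially the same route as the paper: invoke Proposition~\ref{prop:3.2} with a proximity level on the order of $L r^{1+\alpha_0}$, Taylor-expand $h$ at the origin using the universal $C^{1,\alpha_0}$ estimate, and absorb the two errors into $\mu_1 r$ via the defining relation $\mu_1 = 2Lr^{\alpha_0}$. The only differences are cosmetic --- you fix $\delta_0 = Lr^{1+\alpha_0}$ upfront whereas the paper picks $\delta = \tfrac{1}{2}\mu_1 r$ afterward (these coincide), and you spell out the bound $|a|+|b|\leq C$ and the admissibility of (A8) explicitly, which the paper leaves implicit.
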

\begin{proof}
 From Proposition \ref{prop:3.2}, there exists a function $ h \in C^{1,\alpha_{0}}_{loc}(B_{1}) $ such that
 \begin{equation}\label{sec3:eq38}
 \sup_{x\in B_{3/4}} |u(x)-h(x)| \leq \delta,
 \end{equation}
for some $ \delta > 0 $ to be determined. In addition, the regularity of function $ h $ yields that
\begin{equation}\label{sec3:eq39}
 \sup_{x \in B_{r}} |h(x)-h(0)-Dh(0)\cdot x| \leq Lr^{1+\alpha_{0}},
\end{equation}
for a universal constant $ L > 0 $ and every $ 0 < r <1 $. We combine (\ref{sec3:eq38}), (\ref{sec3:eq39}) and the choice of $ \mu_{1} $ to get
\begin{align*}
  \sup_{x \in B_{r}} |u(x)-a-b\cdot x|  & \leq \sup_{x \in B_{r}} |u(x)-h(x)| + \sup_{x \in B_{r}} |h(x)-a- b\cdot x|  \\
  & \leq \delta + Lr^{1+\alpha_{0}}  \leq  \delta + \frac{1}{2}\mu_{1} r,
\end{align*}
where $ a := h(0) $ and $ b := Dh(0) $. Now choosing $ \delta = \frac{1}{2}\mu_{1} r  $, there holds
\begin{equation*}
  \sup_{x \in B_{r}} |u(x)-a-b\cdot x| \leq \mu_{1} r.
\end{equation*}
Hence the proof is completed.
\end{proof}

\begin{Proposition}\label{Prop:34}
Let $ u \in C^{0}(\overline{B}_{1}) $ be a normalized viscosity solution to (\ref{Intro:eq1}), assume \hyperref[A1]{\bf (A1)}--\hyperref[A6a]{\bf (A6a)} and \hyperref[A7]{\bf (A7)} hold. Given $ \mathcal{M}, \delta >0 $, there exists $ \epsilon >0 $ such that if
\begin{equation*}
|u(x)| \leq \mathcal{M}(1+|x|^{1+\alpha_{0}}), \ \ \forall x \in \mathbb{R}^{n},
\end{equation*}
and
\begin{equation*}
|\tau -2| + ||f||_{L^{\infty}(B_{1})} \leq \epsilon,
\end{equation*}
then there exists an affine function $ (l_{k})_{k\in \mathbb{N}}  $ of the form
\begin{equation*}
  l_{k}(x)= a_{k} + b_{k} \cdot x
\end{equation*}
satisfying
\begin{equation*}
\sup_{x \in B_{r^{k}}} |u(x)-l_{k}(x)| \leq \bigg(\prod_{i=1}^{k}\mu_{i} \bigg) r^{k},
\end{equation*}
\begin{equation*}
|a_{k+1}-a_{k}| \leq C \bigg(\prod_{i=1}^{k}\mu_{i} \bigg) r^{k},
\end{equation*}
and
\begin{equation*}
|b_{k+1}-b_{k}| \leq C \bigg(\prod_{i=1}^{k}\mu_{i} \bigg).
\end{equation*}
\end{Proposition}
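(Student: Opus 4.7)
The plan is to argue by induction on $k$. The base case $k = 1$ is exactly Proposition \ref{prop3.3} applied to $u$ with $\xi = 0$: it delivers $l_{1}(x) = a_{1} + b_{1} \cdot x$ satisfying $|a_{1}| + |b_{1}| \leq C$ and $\sup_{B_{r}} |u - l_{1}| \leq \mu_{1} r$, from which the two coefficient increment bounds at level $k = 0 \to 1$ follow on taking $l_{0} \equiv 0$.

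For the inductive step, suppose the three estimates hold up to index $k$, and set $\Pi_{k} := \prod_{i=1}^{k} \mu_{i}$. Define the rescaled function
\begin{equation*}
v_{k}(x) := \frac{u(r^{k} x) - l_{k}(r^{k} x)}{\Pi_{k} r^{k}}, \qquad x \in B_{1},
\end{equation*}
which satisfies $\|v_{k}\|_{L^{\infty}(B_{1})} \leq 1$ by the inductive hypothesis. Because $\mathcal{I}_{\tau}$ annihilates affine functions and the scaling of the nonlocal operator and of the degenerate coefficient is absorbed by the factor $\Pi_{k}/r^{k(\tau - 1)}$ built into the definition of $\sigma_{1}^{k}$, a direct computation shows that $v_{k}$ is a viscosity solution in $B_{1}$ to
\begin{equation*}
- \sigma_{1}^{k}(|Dv_{k} + \xi_{k}|, x)\, \widetilde{\mathcal{I}}_{\tau}(v_{k}, x) = f_{k}(x),
\end{equation*}
where $\xi_{k} := b_{k}/\Pi_{k}$, $f_{k}(x) := f(r^{k} x)$, the operator $\widetilde{\mathcal{I}}_{\tau}$ has the same min-max form with kernels in $\mathcal{K}_{0}$, and $\sigma_{1}^{k}$ is precisely the modulus constructed in \eqref{loc:eq7}. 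Note $\|f_{k}\|_{L^{\infty}(B_{1})} \leq \|f\|_{L^{\infty}(B_{1})} \leq \epsilon$ and $|\tau - 2|$ is unchanged.

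By the shored-up construction of $\Sigma = \{\sigma_{1}^{k}(\cdot, x)\}_{k \in \mathbb{N}}$ from Section \ref{subsec:3.3} and Lemma \ref{Au:lem23}, the family $\Sigma$ is non-collapsing, so hypothesis \hyperref[A8]{\bf (A8)} is uniformly available along the iteration. Proposition \ref{prop3.3} then applies to $v_{k}$ with shift $\xi_{k}$, producing an affine function $\ell(x) = \bar{a} + \bar{b} \cdot x$ with $|\bar{a}| + |\bar{b}| \leq C$ and
\begin{equation*}
\sup_{x \in B_{r}} |v_{k}(x) - \ell(x)| \leq \mu_{k+1} r,
\end{equation*}
the constant $\mu_{k+1}$ being the one prescribed by the recursion in \eqref{loc:eq7}. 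Unfolding the rescaling, define
\begin{equation*}
l_{k+1}(x) := l_{k}(x) + \Pi_{k} r^{k}\, \ell(x / r^{k}) = a_{k+1} + b_{k+1} \cdot x,
\end{equation*}
so that $a_{k+1} - a_{k} = \Pi_{k} r^{k} \bar{a}$ and $b_{k+1} - b_{k} = \Pi_{k} \bar{b}$, bounded by $C \Pi_{k} r^{k}$ and $C \Pi_{k}$ respectively; scaling the estimate on $v_{k}$ back to $u$ also gives $\sup_{B_{r^{k+1}}} |u - l_{k+1}| \leq \Pi_{k+1} r^{k+1}$.

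The main obstacle is a two-fold bookkeeping. First, I must check that $v_{k}$ retains the growth $|v_{k}(x)| \leq \mathcal{M}(1 + |x|^{1 + \alpha_{0}})$ needed by Proposition \ref{prop3.3}; this follows from the original growth of $u$ together with the telescoping bounds $|a_{k}| \leq C \sum_{i < k} \Pi_{i} r^{i}$ and $|b_{k}| \leq C \sum_{i < k} \Pi_{i}$, both summable thanks to the Dini control of $\sigma_{2}^{-1}$ guaranteed by \hyperref[A7]{\bf (A7)} together with Remark \ref{Rk26}. Second, and more essentially, the non-collapse property must be transferred uniformly to all the rescaled moduli; this is precisely the role of the normalization rule for $\mu_{k}$ in \eqref{loc:eq7}, which is adjusted upward exactly when the previous choice would otherwise drive $(\Pi_{k}/r^{k(\tau-1)})\sigma_{1}(\Pi_{k} c_{k})$ below $1$, thereby preserving the shored-up condition of Definition \ref{Def23}. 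Once these two points are discharged, Proposition \ref{prop3.3} applies at each step with the same $\epsilon$ and the inductive chain closes, producing the sequence $(l_{k})_{k \in \mathbb{N}}$.
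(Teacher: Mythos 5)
Your overall route — rescale by $v_k(x) = (u(r^k x) - l_k(r^k x))/(\Pi_k r^k)$, check that $v_k$ solves an equation governed by the shored-up modulus $\sigma_1^k$ from \eqref{loc:eq7}, invoke Proposition~\ref{prop3.3} (with hypothesis~\hyperref[A8]{\textbf{(A8)}} supplied by Lemma~\ref{Au:lem23}), and unfold — is the same one the paper takes; the paper simply works with the nested quantity $u_k(x) = (u_{k-1}(rx) - l_{k-1}(rx))/(\mu_k r)$, which is identical to your $v_k$ after telescoping.

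The genuine gap is in your verification of the growth condition $|v_k(x)| \leq \mathcal{M}(1+|x|^{1+\alpha_0})$. You assert this ``follows from the original growth of $u$ together with the telescoping bounds $|a_k| \leq C\sum_{i<k}\Pi_i r^i$ and $|b_k| \leq C\sum_{i<k}\Pi_i$.'' But unwinding that argument, one would estimate
\begin{equation*}
|v_k(x)| \leq \frac{\mathcal{M}\big(1 + (r^k|x|)^{1+\alpha_0}\big) + |a_k| + |b_k|\, r^k|x|}{\Pi_k r^k},
\end{equation*}
and the constant part $(\mathcal{M}+|a_k|)/(\Pi_k r^k)$ diverges as $k\to\infty$ because $\Pi_k r^k \to 0$; the telescoping bound merely says $|a_k|,|b_k|$ stay bounded, not that they cancel the divisor. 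The bound you need for $|x|$ of order $1$ cannot come from the growth of $u$ at all — it has to come from the accumulated closeness of $u$ to $l_k$. This is precisely why the paper's proof runs a separate induction, splitting into the two regimes $|x|r > 1/2$ and $|x|r \leq 1/2$: in the second regime it uses the $C^{1,\alpha_0}$ function $h$ produced by Proposition~\ref{prop:3.2}, via $|u_{N-1}(rx)-h(rx)| \leq \tfrac{1}{2}\mu_1 r$ and $|h(rx)-l_{N-1}(rx)| \leq Lr^{1+\alpha_0}|x|^{1+\alpha_0}$, divided only by $\mu_N r$ (not $\Pi_N r^N$), to land cleanly on $\tfrac12 + \tfrac12|x|^{1+\alpha_0}$. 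To close your inductive chain you need to reproduce this two-case argument (or cite it); the shortcut through the summability of $(\Pi_k)$ does not give the right estimate.
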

\begin{proof}
Let
\begin{equation*}
u_{k}(x) = \frac{u_{k-1}(rx)-l_{k-1}(rx)}{\mu_{k}r},
\end{equation*}
then by induction and simple calculation, we have that $ u_{k} $ satisfies
\begin{equation*}
  \sigma_{1}^{k}\bigg(|Du_{k}(x)+\frac{1}{\mu_{k}}b_{k-1}+\cdots +\frac{1}{\mu_{1}}b_{0}|,x\bigg) \mathcal{I}_{\tau}^{k}(x,u_{k}) = f_{k}(x) \ \ \text{in} \ \ B_{1},
\end{equation*}
where
\begin{align*}
  & \sigma_{1}^{k}(t,x) = \frac{\mu_{1}\mu_{2}\cdots \mu_{k}}{r^{k(\tau -1)}}\bigg [ \sigma_{1} (\mu_{1}\mu_{2}\cdots \mu_{k} t) + a(r^{k}x)\sigma_{2} (\mu_{1}\mu_{2}\cdots \mu_{k} t)    \bigg],  \\
 & \mathcal{I}_{\tau}^{k}(x,u_{k}) =  \frac{r^{k(\tau-1)}}{\mu_{1}\mu_{2}\cdots \mu_{k}} \mathcal{I}_{\tau }(r^{k}x, \mu_{1}\mu_{2}\cdots \mu_{k}r+\cdots),
\end{align*}
and
\begin{equation*}
  f_{k}(x)= f(r^{k}x) .
\end{equation*}
Now we want to show that
\begin{equation}\label{the:eq3.9}
|u_{k}(x)| \leq \mathcal{M}(1+|x|^{1+ \alpha_{0}}), \ \ \forall \  x \in \mathbb{R}^{n}.
\end{equation}
We again use an induction argument. For the case $ k = 0 $, we take $ u_{0} = u $. Suppose (\ref{the:eq3.9}) is valid for the case $ k=0, 1, \cdots, N-1 $, then we shall prove that (\ref{the:eq3.9}) is true in the case $ k=N $.

Case 1. $ |x|r > 1/2   $. By the induction hypothesis and $ 2Lr^{\alpha_{0}} := \mu_{1} < \mu_{N} < 1 $, we arrive
\begin{align*}
|u_{N}(x)| & \leq (\mu_{N}r)^{-1} \left(|u_{N-1}(rx)|+ |l_{N-1}(rx)|    \right)  \\
& \leq (\mu_{N}r)^{-1} \big[m(1+|rx|^{1+\alpha_{0}})\big] + (\mu_{N}r)^{-1} L(1+r|x|)  \\
& \leq \frac{2^{\alpha_{0}}m}{L}|x|^{1+\alpha_{0}} + \frac{m}{2L}|x|^{1+\alpha_{0}} + 2^{\alpha_{0}} |x|^{1+\alpha_{0}} + 2^{\alpha_{0}-1} |x|^{1+\alpha_{0}} : = K |x|^{1+\alpha_{0}},
\end{align*}
where $ K = \frac{2^{\alpha_{0}}m}{L} +\frac{m}{2L} + 2^{\alpha_{0}} + 2^{\alpha_{0}-1} $. By choosing a suitable $ \mathcal{M} $ such that $ K < \mathcal{M} $, then (\ref{the:eq3.9}) is verified.

Case 2. $ |x|r \leq 1/2   $. Using $ 2Lr^{\alpha_{0}} := \mu_{1} < \mu_{N} < 1 $ again, it follows that
\begin{align*}
|u_{N}(x)| & \leq (\mu_{N}r)^{-1} \bigg(|u_{N-1}(rx)-h(rx)| + |h(rx)-l_{N-1}(rx)|    \bigg)  \\
& \leq (\mu_{N}r)^{-1} \bigg( \frac{1}{2}\mu_{1}r  + Lr^{1+\alpha_{0}}|x|^{1+\alpha_{0}}      \bigg)   \\
&  \leq \frac{1}{2} + \frac{1}{2}|x|^{1+\alpha_{0}} \leq 1+ |x|^{1+\alpha_{0}},
\end{align*}
where $ h $ comes from Proposition \ref{prop3.3}. This finishes the proof of (\ref{the:eq3.9}).

Once we have verified (\ref{the:eq3.9}), we can apply Proposition \ref{prop3.3} to obtain
\begin{equation*}
  \sup_{x \in B_{r}} |u_{k}(x)-l_{k}(x)| \leq \mu_{1} r.
\end{equation*}
Scaling back to $ u $ yields
\begin{equation*}
  \sup_{x \in B_{r^{k+1}}} \bigg|u(x)- l_{k+1}(x) \bigg| \leq \mu_{1}^{2} \mu_{2} \cdots \mu_{k} r^{k+1} \leq \bigg( \prod_{i=1}^{k+1} \mu_{i}   \bigg)r^{k+1},
\end{equation*}
where
\begin{equation*}
  l_{k+1}(x) = l_{0}(x) + \sum_{i=1}^{k} \big( \prod_{j=1}^{i} \mu_{j}  \big) r^{i} l_{i}(r^{-i}x).
\end{equation*}
Moreover, we have
\begin{equation*}
  |a_{k+1}-a_{k}| = \bigg| \bigg(\prod_{i=1}^{k} \mu_{i}\bigg)r^{k}a_{k}\bigg| \leq C \bigg(\prod_{i=1}^{k} \mu_{i}    \bigg)r^{k},
\end{equation*}
and
\begin{equation*}
  |b_{k+1}-b_{k}| = \bigg|\bigg( \prod_{i=1}^{k} \mu_{i}\bigg)b_{k}   \bigg| \leq C \bigg(\prod_{i=1}^{k} \mu_{i}    \bigg).
\end{equation*}
This completes the proof.
\end{proof}

Now we are ready to present the proof of Theorem~\ref{thm1}.

\begin{proof}[Proof of Theorem~\ref{thm1}]
We mainly examine the convergence of the sequence of $ \prod_{i=1}^{k} \mu_{i}   $. Noting that two possibilities concerning the sequence $ \prod_{i=1}^{k} \mu_{i}   $ will happen. Either the sequence repeats after some index $ N \geq 2 $ or we have $ \mu_{n} < \mu_{n+1} $ for infinitely many indices $ n \in \mathbb{N} $.

In the formal case, it is well-known that $ C^{1,\alpha} $ regularity estimate are available, for $ 0 < \alpha < \alpha_{0} $, where $ \alpha_{0} \in (0,1) $ is the exponent associated with the regularity of $ I_{\infty} $ harmonic function. In the latter case, there holds
\begin{equation*}
\frac{\prod_{i=1}^{k}\mu_{i}}{r^{k(\tau -1)}}\sigma_{1} \bigg(\prod_{i=1}^{k}\mu_{i} c_{k} \bigg) = 1,
\end{equation*}
which implies that
\begin{equation*}
   \gamma_{k}:= \prod_{i=1}^{k} \mu_{i}  =  \frac{1}{c_{k}} \sigma_{1}^{-1} \bigg(\frac{r^{k(\tau-1)}}{\prod_{i=1}^{k}} \mu_{i}   \bigg) \leq \frac{\sigma_{1}^{-1}(\theta^{k})}{c_{k}}.
\end{equation*}
We apply Lemma \ref{Au:prop1} to get $ (\gamma_{k})_{k\in \mathbb{N}} \in \ell^{1} $, and its norm can be bounded by $ \sum_{k=1}^{\infty} \sigma_{1}^{-1}(\theta^{k}) $. Consequently, we deduce that
\begin{equation*}
  \lim_{k\rightarrow \infty} \bigg( \prod_{i=1}^{k} \mu_{i} \bigg) =0.
\end{equation*}
Thereby, $ (a_{k})_{k\in \mathbb{N}} $ and $ (b_{k})_{k\in \mathbb{N}} $ are Cauchy sequences and there exists $ a_{\infty} \in \mathbb{R} $ and $ b_{\infty} \in \mathbb{R}^{n} $ such that
\begin{equation*}
  a_{k} \rightarrow a_{\infty} \ \ \text{and} \ \ b_{k} \rightarrow b_{\infty},
\end{equation*}
as $ k \rightarrow \infty $. Moreover, there holds
\begin{equation}\label{Sec3:eq38}
 |a_{k}-a_{\infty}| \leq C \bigg( \sum_{i=k}^{\infty} \gamma_{i}\bigg) r^{k} \ \ \text{and} \ \  |b_{k}-b_{\infty}| \leq C \bigg( \sum_{i=k}^{\infty} \gamma_{i}  \bigg).
\end{equation}
Set $ l_{\infty}(x):= a_{\infty} + b_{\infty}\cdot x    $, and for any $ 0< \rho \ll 1$, then there exists $ k \in \mathbb{N} $ such that $ r^{k+1} < \rho \leq r^{k} $.
We combine proposition \ref{Prop:34} and \eqref{Sec3:eq38} to obtain
\begin{align}\label{Sec3:eq39}
\begin{split}
 \sup_{x\in B_{\rho}} |u(x)-l_{\infty}(x)| & \leq \sup_{x\in B_{r^{k}}} |u(x)-l_{k}(x)|  + \sup_{x\in B_{r^{k}}}  |l_{k}(x)-l_{\infty}(x)|   \\
& \leq C \gamma_{k} r^{k}  +  C \bigg( \sum_{i=k}^{\infty} \gamma_{i}\bigg) r^{k}  \\
& \leq C \bigg( \sum_{i=k}^{\infty} \gamma_{i}\bigg) \rho .
\end{split}
\end{align}
In the end, set
\begin{equation*}
    \Gamma(t) :=   \bigg( \sum_{i=\lfloor\ln t^{-1}\rfloor}^{\infty} \gamma_{i}\bigg),
\end{equation*}
where $ \lfloor\ln t^{-1}\rfloor $ denotes the biggest integer that is less than or equal to $ \ln t^{-1} $. Since $ \gamma_{i} \in \ell^{1} $, then $ \Gamma(t)  $ indeed is a modulus of continuity. Hence \eqref{Sec3:eq39} becomes
\begin{equation*}
  \sup_{x\in B_{\rho}} |u(x)-l_{\infty}(x)|  \leq C \Gamma(\rho) \rho,
\end{equation*}
which finishes the proof of Theorem \ref{thm1}.
\end{proof}

\vspace{3mm}

\section{$ C^{2,\alpha}$ regularity at the local extrema in the degenerate case}\label{Section 4}
In this section, we prove higher regularity of solutions to (\ref{intro:eq5}) at the local extrema point. Our proof consists of two steps. First, we establish a new flatness improvement estimate at the local extrema, see Lemma \ref{se4:lemma1}. Then we utilize Lemma \ref{se4:lemma1} and scaling techniques repeatedly to obtain geometric decay estimate.

Now we give a precise statement about flatness estimate.

\begin{Lemma}\label{se4:lemma1}(flatness estimate)
Let $ u \in C^{0}(\overline{B}_{1}) $ be a normalized viscosity solution to
\begin{equation*}
  - \bigg(|Du|^{\widetilde{p}}  + a(x) |Du|^{q}    \bigg) \mathcal{I}_{\tau}(u,x) = f(x) \ \ \text{in} \ \ B_{1},
\end{equation*}
assume \hyperref[A1]{\bf (A1)}--\hyperref[A5]{\bf (A5)} hold and assume also $ x_{0} \in B_{1/2} $ is a local minimum. Given $ \mathcal{M}, \delta >0 $, there exists $ \epsilon >0 $ such that if
\begin{equation*}
|u(x)| \leq \mathcal{M}(1+|x|^{1+\alpha_{0}}), \ \ \forall \ x \in \mathbb{R}^{n}
\end{equation*}
and
\begin{equation}\label{sec4:eq1}
|\tau-2| + ||f||_{L^{\infty}(B_{1})} \leq \epsilon,
\end{equation}
then
\begin{equation}\label{sec4:eq2}
\sup_{B_{1/10}(x_{0})} (u-u(x_{0})) \leq \delta .
\end{equation}

\end{Lemma}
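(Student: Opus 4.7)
The plan is to argue by a compactness-and-contradiction scheme, in the same spirit as Proposition~\ref{prop:3.2}. Suppose the conclusion fails: then there exist $\delta_0>0$ and sequences of normalized viscosity solutions $u_k$ to the corresponding equations with kernels satisfying \hyperref[A5]{\bf (A5)}, sources $f_k$ with $|\tau_k-2|+\|f_k\|_{L^\infty(B_1)}\le 1/k$, together with local minima $x_k\in B_{1/2}$ of $u_k$ (in a neighborhood containing $B_{1/10}(x_k)$), such that $|u_k(x)|\le\mathcal{M}(1+|x|^{1+\alpha_0})$ and $\sup_{B_{1/10}(x_k)}(u_k-u_k(x_k))\ge\delta_0$. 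Applying Proposition~\ref{Prop3.1} (with $\xi=0$) to each $u_k$ produces a uniform local Lipschitz bound; together with the growth control, Arzelà--Ascoli yields a subsequence along which $u_k\to u_\infty$ locally uniformly in $B_1$ and $x_k\to x_\infty\in\overline{B_{1/2}}$.

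The next step is to pass to the limit in the equation to show that $u_\infty$ is a viscosity solution of the uniformly elliptic Isaacs problem $\mathcal{I}_\infty(D^2u_\infty)=0$ in $B_1$. By \hyperref[A5]{\bf (A5)} together with $\tau_k\to 2$, the operator $\mathcal{I}_{\tau_k}$ converges to $\mathcal{I}_\infty=\inf_i\sup_j k_{ij}\operatorname{Tr}(\cdot)$ on smooth test functions (cf.\ Remark~\ref{intro:rk2}), and $f_k\to 0$. Whenever a test function $\varphi$ touches $u_\infty$ at a point with $D\varphi\neq 0$, one may divide through by the coefficient $|D\varphi|^{\widetilde p}+a_k(x)|D\varphi|^q$, which is bounded below uniformly in $k$, and pass to the limit exactly as in Steps~3 and 4 of the proof of Proposition~\ref{prop:3.2}. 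The principal technical obstacle of the proof is the critical-gradient case $D\varphi=0$; it is handled precisely as in the omitted argument of that same step, relying on the cancellation property highlighted in Remark~\ref{intro:rk2} together with the inf/sup-convolution technique of \cite[Lemma~3.5]{PDM24} and \cite[Proposition~9]{PS24}.

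Finally, since each $x_k$ is a local minimum of $u_k$ on a neighborhood containing $B_{1/10}(x_k)$ and $u_k\to u_\infty$ uniformly with $x_k\to x_\infty$, a standard passage to the limit yields $u_\infty(y)\ge u_\infty(x_\infty)$ for every $y\in\overline{B_{1/10}(x_\infty)}\subset B_1$; hence $u_\infty$ attains an interior minimum at $x_\infty$. Since $u_\infty$ is $\mathcal{I}_\infty$-harmonic and $\mathcal{I}_\infty$ is a uniformly elliptic Isaacs operator, the strong minimum principle, which follows from the Harnack inequality of \cite{CC95}, forces $u_\infty$ to be constant on the connected ball $B_1$. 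Consequently $\sup_{B_{1/10}(x_\infty)}(u_\infty-u_\infty(x_\infty))=0$, which together with the uniform convergence contradicts $\sup_{B_{1/10}(x_k)}(u_k-u_k(x_k))\ge\delta_0$ for $k$ large, completing the proof.
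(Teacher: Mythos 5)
Your proof is correct, but it follows a genuinely different route from the paper's. The paper invokes the $C^{1,\alpha}$ a priori estimate of \cite[Theorem~1.2]{PDM24} to get not only uniform but also $C^1$ convergence ($u_k\to u_\infty$ and $Du_k\to Du_\infty$ locally uniformly). With the gradient in hand it then passes directly to the sandwich inequalities $|Du_k|^{\widetilde p}\,|\mathcal{I}_{\tau_k}|\le |f_k|$, obtains $|Du_\infty|^{\widetilde p}F_\infty(D^2u_\infty)=0$ in the viscosity sense, and strips off the gradient factor using the cancellation property of \cite[Lemma~6]{CL13} to get $F_\infty(D^2u_\infty)=0$. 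You instead use only the Lipschitz compactness of Proposition~\ref{Prop3.1} (with $\xi=0$) and Arzel\`a--Ascoli, then re-run the viscosity passage-to-the-limit of Proposition~\ref{prop:3.2}, dividing by the coefficient when $D\varphi\neq0$ and deferring the critical case $D\varphi=0$ to the inf/sup-convolution argument of \cite[Lemma~3.5]{PDM24}. Your route requires less regularity theory as input (Lipschitz instead of $C^{1,\alpha}$) but has to handle the degenerate test-function case explicitly; the paper's route is shorter once one grants the $C^{1,\alpha}$ estimate, since gradient convergence makes the limit equation immediate and \cite[Lemma~6]{CL13} disposes of the degenerate factor in one stroke. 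Both then finish identically: $u_\infty$ has an interior minimum at $x_\infty=\lim x_k$ and, being $\mathcal{I}_\infty$-harmonic, is constant by the strong minimum principle from \cite{CC95}, contradicting the persistence of $\sup_{B_{1/10}(x_k)}(u_k-u_k(x_k))\ge\delta_0$ under uniform convergence.
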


\begin{proof}
Suppose, by contradiction, that there exist $ \mathcal{M}_{0}, \delta_{0} >0 $ and some sequences $ \{ u_{k}\}_{k}, \{ f_{k}\}_{k}, $ $ \{ a_{k}\}_{k}, \{ \mathcal{I}_{\tau_{k}}\}_{k} $, $    \{\tau_{k}\}_{k} $, $ x_{k} \in B_{1/2} $ such that the following conclusions hold:

\text{(1a)}. $ ||u_{k}||_{L^{\infty}(B_{1})} \leq 1 $ and $ |\tau_{k}-2| + ||f_{k}||_{L^{\infty}(B_{1})} \leq \frac{1}{k}    $ ;

\text{(1b)}. $ |u_{k}(x)| \leq \mathcal{M}_{0}(1+|x|^{1+\alpha_{0}}) $ ;

\text{(1c)}. $ - \bigg(|Du_{k}|^{\widetilde{p}}  + a_{k}(x) |Du_{k}|^{q}    \bigg) \mathcal{I}_{\tau_{k}}(u_{k},x) = f_{k}(x) \ \ \text{in} \ \ B_{1} $, in the viscosity sense.

However,
\begin{equation}\label{sec4:eq3}
\sup_{B_{1/10}(x_{0})} (u_{k}-u_{k}(x_{k})) \geq \delta_{0} >0 .
\end{equation}

From \cite[ Theorem 1.2]{PDM24} and the  Arzel$ \grave{a}$-Ascoli Theorem, it follows that
\begin{equation}\label{sec4:eq4}
 u_{k} \rightarrow u_{\infty} \ \text{uniformly in} \ \  B_{r} \ \text{and} \   Du_{k} \rightarrow Du_{\infty}\ \  \text{uniformly in} \ \  B_{r}.
\end{equation}
Since $ \tau_{k} \rightarrow 2   $ and \hyperref[A5]{\bf (A5)}, we have $ \mathcal{I}_{\tau_{k}} \rightarrow F_{\infty}   $, where $ F_{\infty} $ is a uniformly elliptic operator. In addition, using \hyperref[A4]{\bf (A4)}, it yields that
\begin{equation}\label{sec4:eq5}
\left\{
     \begin{aligned}
     &  |Du_{k}|^{\widetilde{p}} |\mathcal{I}_{\tau_{k}}| \leq |f_{k}(x)|,    \\
     &   |Du_{k}|^{\widetilde{p}} |\mathcal{I}_{\tau_{k}}| \geq -|f_{k}(x)|.         \\
     \end{aligned}
     \right.
\end{equation}
Taking $ k \rightarrow \infty $ in (\ref{sec4:eq5}), we have that $ u_{\infty} $ is a viscosity solution to
\begin{equation*}
  |Du_{\infty}|^{\widetilde{p}} F_{\infty}(D^{2}u_{\infty}) = 0 \ \ \text{in} \ \ B_{1/4},
\end{equation*}
which leads to
\begin{equation}
  F_{\infty}(D^{2}u_{\infty}) = 0 \ \ \text{in} \ \ B_{1/4},
\end{equation}
where we have used \cite[ Lemma 6]{CL13}.

Furthermore, $ x_{k} \rightarrow x_{\infty} $, and by the uniform convergence, $ x_{\infty} $ is a local minimum of $ u_{\infty} $. Consequently, by the strong maximum principle \cite[Proposition 4.9]{CC95}, $ u_{\infty} $ is a constant. Hence, we get a contradiction with (\ref{sec4:eq3}) for $ k $ sufficiently large.
\end{proof}

With the help of Lemma \ref{se4:lemma1}, we now show higher regularity of solution to (\ref{intro:eq5}).

\begin{proof}[Proof of Theorem~\ref{thm2}]
For simplicity, we assume $ u(0) =0  $ and $ 0 $ is a local minimum of $ u$. For $ 0 < \rho_{*} <1/10 $ to be chosen later. Defining $ v_{1}(x) = u(\rho_{*}x) $, then $ v_{1} $ solves
\begin{equation*}
  - \bigg(|Dv_{1}|^{\widetilde{p}} + a_{1}(x) |Dv_{1}|^{q}\bigg) \mathcal{I}_{\tau}^{1}(v_{1},x)=f_{1}(x) \ \ \text{in} \ \ B_{1}
\end{equation*}
in the viscosity sense, where
\begin{equation*}
    a_{1}(x):= \rho_{*}^{\widetilde{p}-q}a(\rho_{*}x), \ \mathcal{I}_{\tau}^{1}(v_{1},x):=\rho_{*}^{\tau} \mathcal{I}_{\tau}(v_{1},\rho_{*}x)
\end{equation*}
and
\begin{equation*}
  f_{1}(x):= f(\rho_{*}x)\rho_{*}^{\tau+\widetilde{p}}.
\end{equation*}
In addition, using \hyperref[A3a]{\bf (A3a)}, we obtain
\begin{equation*}
  ||f_{1}||_{L^{\infty}(B_{1})}  \leq K \rho_{*}^{\tau+\widetilde{p}+\gamma}.
\end{equation*}
Next, we select $ \delta = 10^{-(2+\alpha)} $, and let $ \epsilon $ be small positive constant satisfying    (\ref{sec4:eq1}) in Lemma \ref{se4:lemma1}. Here we choose
\begin{equation*}
  0 < \rho_{*} \leq \bigg( \frac{\epsilon}{K}  \bigg)^{\frac{1}{\tau+\widetilde{p}+\gamma}}.
\end{equation*}
Noticing that $ |v_{1}(x)|= |u(\rho_{*}x)| \leq 1+ |\rho_{*}x|^{1+\alpha_{0}} \leq 1+ |x|^{1+\alpha_{0}}$ for any $ x \in \mathbb{R}^{n} $. Then using Lemma \ref{se4:lemma1}, we get
\begin{equation*}
  \sup_{B_{1/10}(0)} v_{1}  \leq   10^{-(2+ \alpha)}.
\end{equation*}
Now defining
$$ v_{2}(x) = 10^{2+ \alpha} v_{1}\bigg(\frac{x}{10}\bigg), $$
then $ v_{2} $ solves
\begin{equation*}
  - \bigg(|Dv_{2}|^{\widetilde{p}} + a_{2}(x) |Dv_{2}|^{q}\bigg) \mathcal{I}_{\tau}^{2}(v_{2},x)=f_{2}(x) \ \ \text{in} \ \ B_{1}
\end{equation*}
in the viscosity sense, where
\begin{equation*}
  a_{2}(x) := 10^{(1+\alpha)(\widetilde{p}-q)}a_{1}(x/10), \ \mathcal{I}_{\tau}^{2}(v_{2},x):=10^{2+\alpha-\tau} \mathcal{I}_{\tau}^{1}(10^{-(2+\alpha)}v_{2}, x/10)
\end{equation*}
and
$$ f_{2}(x) = 10^{2+\alpha+\widetilde{p}(1+\alpha)-\tau} \rho_{*}^{\tau+\widetilde{p}} f\bigg(\frac{\rho_{*}x}{10}\bigg)  .    $$
Using \hyperref[A3a]{\bf (A3a)} again, we have
\begin{equation*}
  ||f_{2}||_{L^{\infty}(B_{1})} \leq K \rho_{*}^{\tau+\widetilde{p}+\gamma} 10^{2+\alpha+\widetilde{p}(1+\alpha)-\tau-\gamma}.
\end{equation*}
Notice that the choice of $ \alpha $ allows
\begin{equation*}
  2+\alpha+\widetilde{p}(1+\alpha)-\tau - \gamma =0.
\end{equation*}
It can easily be seen that $ |v_{2}(x)| \leq 1+ |x|^{1+\alpha_{0}}   $ for any $ x \in \mathbb{R}^{n} $. Then using Lemma \ref{se4:lemma1} again, we have
\begin{equation*}
  \sup_{B_{1/10}(0)} v_{2}  \leq   10^{-(2+ \alpha)}.
\end{equation*}
Scaling back to $ v_{1} $ yields
\begin{equation*}
  \sup_{B_{1/10^{2}}(0)} v_{1}  \leq   10^{-2(2+ \alpha)}.
\end{equation*}
Iterating inductively the above argument gives the following geometric decay estimate
\begin{equation*}
  \sup_{B_{1/10^{k}}(0)} v_{1}  \leq   10^{-k(2+ \alpha)}.
\end{equation*}
Finally, given $ 0 <r < \frac{\rho_{*}}{10} $, then there exists a $ k \in \mathbb{N} $ such that $ 10^{-(k+1)} < \frac{r}{\rho_{*}} \leq 10^{-k} $, therefore,
\begin{align*}
\sup_{B_{r}} u(x) & \leq \sup_{B_{r/\rho_{*}}}v_{1}(x)\leq \sup_{B_{10^{-k}}}v_{1}(x)  \\
& \leq   10^{-k(2+ \alpha)} \leq \bigg(\frac{10}{\rho_{*}}  \bigg)^{2+\alpha} r^{2+\alpha} \\
& \leq C r^{2+\alpha},
\end{align*}
where $ C= C(n,\gamma,\tau,\widetilde{p},\lambda,\Lambda) $. Thus, $ u $ is $ C^{2,\alpha} $ differentiable at $ 0 $, with $ Du(0) = D^{2}u(0) = 0 $. This completes the proof of Theorem \ref{thm2}.
\end{proof}

\vspace{3mm}

\section{Interior $ C^{1,\beta_{0}} $ regularity in the singular case}\label{Section 5}
In this section, we aim to prove $ C^{1,\beta_{0}}_{loc} $ regularity of solutions for a class non-local equation with general singularity. The argument in \cite{PDM24} will be adapted.

For every $ j \in \mathbb{N} $, we consider the following non-local uniformly elliptic equation
 \begin{equation}\label{se5:eq1}
 - \bigg( \sigma_{1}(|Du_{j}|+c_{j})  + a(x) \sigma_{2}(|Du_{j}|+c_{j})    \bigg) \mathcal{I}_{\tau}(u_{j},x) = f(x) \ \ \text{in} \ \ B_{1},
 \end{equation}
where $ (c_{j})_{j\in \mathbb{N}} \in \mathbb{R}^{+}  $ such that $ c_{j} \rightarrow 0 $ and $ c_{j} \leq 1  $ for every $ j $. First, we want to show that the sequence $(u_{j})_{j\in \mathbb{N}} $ of viscosity solution to (\ref{se5:eq1}), with boundary data in $ L_{\tau}^{1}(\mathbb{R}^{n}) $, converges to $ u_{\infty} \in C(\overline{B_{1}}) \cap L_{\tau}^{1}(\mathbb{R}^{n})    $, which is an approximated viscosity solution to equation (\ref{Pre:eq1}). To deal with this problem, we require the compactness of solution $(u_{j})_{j\in \mathbb{N}} $ for (\ref{se5:eq1}).

\begin{Proposition}
\label{se5:pro1}
Suppose $ u_{j} \in C^{0}(\overline{B}_{1})  $ is a viscosity solution to (\ref{se5:eq1}), assume \hyperref[A1]{\bf (A1)}--\hyperref[A4]{\bf (A4)} and \hyperref[A6b]{\bf (A6b)} hold. Then $ u_{j} \in C^{0,1}_{loc}(B_{1}) $ with the estimate
\begin{equation}
|u_{j}(x)-u_{j}(y)| \leq C|x-y|, \ \ \forall \ x,y \in B_{1/2},
\end{equation}
where the constant $ C > 0 $ is independent of $ j $.
\end{Proposition}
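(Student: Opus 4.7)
The strategy is to adapt the Ishii--Lions doubling variable argument used in \cite[Lemma 3.1]{PDM24} (and in Proposition \ref{Prop3.1} of the present paper) to the regularized singular setting. Fix $x_0 \in B_{1/2}$, choose a strictly concave modulus $\varphi(t) = t - \omega_0 t^{1+\theta}$ with $\omega_0, \theta > 0$ small and universal, and for parameters $L, M > 0$ to be tuned define the auxiliary function
\begin{equation*}
\Phi(x,y) \;=\; u_j(x) - u_j(y) - L\,\varphi(|x-y|) - M\bigl(|x-x_0|^2 + |y-x_0|^2\bigr)
\end{equation*}
on $\overline{B}_1 \times \overline{B}_1$. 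Taking $M$ sufficiently large compared with $\|u_j\|_{L^\infty(B_1)}$ confines any maximum point $(x_1, y_1)$ to $B_{3/4} \times B_{3/4}$. The goal is to show $\max \Phi \leq 0$ for a choice of $L$ independent of $j$; evaluating at $y = x_0$ then yields $|u_j(x) - u_j(x_0)| \leq L\,|x-x_0|$.

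Assume for contradiction that $\max \Phi > 0$ at $(x_1, y_1)$; the strict concavity of $\varphi$ and positivity of the maximum force $x_1 \neq y_1$. Applying Lemma \ref{lem2.3} one extracts matrices $X, Y$, with the gradients of the test functions satisfying $|p_x|, |p_y| \geq cL$ once $M \ll L$. Subtracting the resulting viscosity inequalities and exploiting $\varphi'' < 0$ together with the kernel bounds \eqref{bd for K}, one arrives at the customary lower estimate
\begin{equation*}
\mathcal{I}_\tau(u_j, x_1) - \mathcal{I}_\tau(u_j, y_1) \;\geq\; c_1 L |x_1-y_1|^{1-\tau}|\varphi''(|x_1-y_1|)| \,-\, C_2,
\end{equation*}
whose right-hand side grows linearly in $L$. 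To close the contradiction we must divide by the coefficient $\sigma_1(|p|+c_j) + a(x)\sigma_2(|p|+c_j)$, and here the singular case departs from the degenerate one. Since $\sigma_i$ is monotone decreasing by \hyperref[A6b]{\bf (A6b)} and $|p_x|+c_j,\, |p_y|+c_j \geq cL \geq 1$ for $L$ large, we obtain the two-sided bound $\sigma_i(cL+1) \leq \sigma_i(|p|+c_j) \leq \sigma_i(1)$ \emph{uniformly in} $j$, legitimizing the division and producing a right-hand side of size at most $\|f\|_{L^\infty(B_1)}/\sigma_1(cL+1)$. Combining with the previous display, one reaches the contradiction for $L$ sufficiently large depending only on $n, \tau, \lambda, \Lambda, \|a\|_{L^\infty(B_1)}, \|f\|_{L^\infty(B_1)}, \sigma_i$ and $\|u\|_{L^\infty(B_1)}$.

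The main obstacle is precisely the uniform-in-$j$ control of the nonlinear coefficient: for prototypes like $\sigma_i(t) = t^{\widehat p}$ with $-1<\widehat p <0$ one has $\sigma_i(c_j) \to \infty$ as $c_j \to 0$, so a naive bound would blow up with $j$. The choice of the concave penalization $\varphi$ with $\varphi'(0^+) = 1$ is crucial: it forces $|p_x|, |p_y|$ to be of order $L$, keeping the argument of $\sigma_i$ bounded away from $0$ independently of $j$. The remaining manipulations --- splitting the non-local operator into its near and far parts, controlling the near part by $|\varphi''|$ and the far part by $\|u_j\|_{L^\infty(B_1)}$, and absorbing the contribution of the quadratic penalty $M|\cdot|^2$ --- are entirely analogous to \cite[Lemma 3.1]{PDM24} and can be carried out routinely.
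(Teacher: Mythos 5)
Your overall strategy coincides with the paper's: an Ishii--Lions doubling argument around a strictly concave modulus $\varphi$, Lemma~\ref{lem2.3} to extract test gradients, and a contradiction for $L$ large after dividing the viscosity inequalities by the coefficient $\sigma_1(|p|+c_j) + a\,\sigma_2(|p|+c_j)$. The localization differs slightly (you use a quadratic penalty $M(|x-x_0|^2+|y-x_0|^2)$, the paper uses a smooth cutoff $\psi(y)$), but this is immaterial.

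There is, however, a genuine gap in the step you flag as the heart of the proof. You assert that the crucial point is the lower bound $|p_x|,|p_y|\geq cL$, ``keeping the argument of $\sigma_i$ bounded away from $0$ independently of $j$'' because $\sigma_i(c_j)\to\infty$ is a threat. This reasoning is transplanted from the \emph{degenerate} case (where $\sigma_i$ is increasing with $\sigma_i(0)=0$, so a lower bound on $|p|$ really does keep the coefficient away from zero). In the \emph{singular} case \hyperref[A6b]{\bf (A6b)} the $\sigma_i$ are \emph{decreasing}, so the inequality you write,
\begin{equation*}
\sigma_i(cL+1) \;\leq\; \sigma_i(|p|+c_j),
\end{equation*}
does \emph{not} follow from $|p|+c_j\geq cL$; that hypothesis gives $\sigma_i(|p|+c_j)\leq\sigma_i(cL)$, the reverse direction. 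What lower-bounds the coefficient (and hence controls the division of $f$) is the \emph{upper} bound on $|p|+c_j$: from the structure of the test function one has $|p|\leq CL$ and $c_j\leq 1$, whence $\sigma_1(|p|+c_j)\geq\sigma_1(CL+1)>0$ by monotone decrease. This is exactly what the paper uses (``$\,|\xi_{\overline x}|\leq L(2+\alpha)\,$'' followed by the bound $\leq \|f\|_{L^\infty(B_1)}/\sigma_2(1+L(2+\alpha))$). The scenario you worry about, $\sigma_i(|p|+c_j)\to\infty$ as $|p|+c_j\to 0$, is in fact harmless here: a large coefficient makes $f/(\sigma_1+a\sigma_2)$ \emph{small}, which only helps the contradiction. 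So the ``crucial'' ingredient you isolate is not used at all, while the bound you actually need is the one you left implicit. Your final displayed estimate $\|f\|_{L^\infty(B_1)}/\sigma_1(cL+1)$ is of the right form, but only if $cL+1$ is read as the \emph{upper} bound on $|p|+c_j$, which contradicts the argument you gave for it.

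A secondary issue: the ``customary lower estimate'' $\mathcal{I}_\tau(u_j,x_1)-\mathcal{I}_\tau(u_j,y_1)\geq c_1L|x_1-y_1|^{1-\tau}|\varphi''|-C_2$ has the sign reversed relative to what the concavity of $\varphi$ delivers. One has $\mathcal{I}_\tau(\Phi_{\overline y},x_1)-\mathcal{I}_\tau(-\Phi_{\overline x},y_1)\leq -c_1L|x_1-y_1|^{1-\tau}|\varphi''|+C_2$, and the viscosity inequalities give $\geq -2\|f\|_{L^\infty}/\sigma_1(CL+1)$; it is the combination of these two that yields the contradiction for $L$ large (cf.\ \eqref{sec5:eq54}--\eqref{sec5:eq55}). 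Correcting the monotonicity direction and this sign would bring your argument in line with the paper's.
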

\begin{proof}
The proof of this proposition is analogous to the proof of \cite[Lemma 4.1]{PDM24}. In the sequel, we omit most of the details, focusing on the main differences with respect to argument in \cite{PDM24}. Now we split the proof into two steps.

{\bf Step 1}. Let $ \psi_{1} : \mathbb{R}^{n} \rightarrow \mathbb{R} $ be a nonnegative and smooth function such that
\begin{equation*}
\psi_{1} =0 \ \ \text{in} \  B_{1/2}, \ \ \psi_{1} = 1 \ \ \text{in} \ B_{3/4}^{c},
\end{equation*}
and define
\begin{equation*}
\psi = (\mathop{osc}\limits_{B_{1}}u_{j}+1) \psi_{1}.
\end{equation*}
In addition, let $ \varphi : [0, +\infty ) \rightarrow \mathbb{R} $ be defined as
\begin{equation*}
\varphi(t)=\left\{
     \begin{aligned}
     &   t-\frac{1}{4}t^{1+\alpha} , 0\leq t \leq t_{0}  ,        \\
     &   \varphi(t_{0}) , \ \    t > t_{0}          ,                  \\
     \end{aligned}
     \right.
\end{equation*}
where $ \alpha \in (0,1) $ are to be determined and $ t_{0}= (\frac{4}{1+\alpha})^{1/\alpha} $ is a fixed number.

Consider $ \phi, \Phi : \mathbb{R}^{n} \times \mathbb{R}^{n} \rightarrow \mathbb{R}  $ defined by
\begin{equation*}
\phi(x,y) = L \varphi(|x-y|) + \psi(y),
\end{equation*}
and
\begin{equation*}
\Phi(x,y) = u_{j}(x) - u_{j}(y) - \phi(x,y),
\end{equation*}
respectively, where $L$ is a positive constant to be determined.
Since $ \Phi $ is a continuous function, we find that $ \Phi  $ attains its maximum at $ (\overline{x}, \overline{y})  $ in $ \overline{B_{1}} \times \overline{B_{1}} $. Our goal is to show $ \Phi(\overline{x},\overline{y}) \leq 0 $. Suppose by contradiction $ \Phi(\overline{x},\overline{y}) > 0 $, then arguing as in \cite[Lemma 4.1]{PDM24}, we shall obtain two viscosity inequality as follows :
\begin{equation}\label{sec5:eq53}
\left\{
     \begin{aligned}
     &   - \bigg(\sigma_{1}(|\xi_{\overline{x}}|+c_{j}) + a(\overline{x})\sigma_{2}(|\xi_{\overline{x}}|+c_{j})               \bigg) \mathcal{I}_{\tau}(u_{j},\Phi_{\overline{y}},\overline{x}) \leq f(\overline{x}) ,        \\
     &   - \bigg(\sigma_{1}(|\xi_{\overline{y}}|+c_{j}) + a(\overline{y})\sigma_{2}(|\xi_{\overline{y}}|+c_{j})                        \bigg) \mathcal{I}_{\tau}(u_{j},-\Phi_{\overline{x}},\overline{y}) \geq f(\overline{y}) ,                  \\
     \end{aligned}
     \right.
\end{equation}
where
\begin{equation*}
\left\{
     \begin{aligned}
     &  \xi_{\overline{x}} := D_{x} \phi(\overline{x},\overline{y}), \ \ \xi_{\overline{y}} := D_{y} \phi(\overline{x},\overline{y}),      \\
     &  \Phi_{\overline{y}}(x) = u_{j}(\overline{y}) + \phi(x, \overline{y}), \ \ - \Phi_{\overline{x}}(y) = -u_{j}(y) - \phi(\overline{x},y).   \\
     \end{aligned}
     \right.
\end{equation*}

{\bf Step 2}. Notice that
\begin{equation*}
  |\xi_{\overline{x}}| \leq L(2+\alpha),
\end{equation*}
which, together with (\ref{sec5:eq53}), \hyperref[A4]{\bf (A4)} and \hyperref[A6b]{\bf (A6b)}, yields that
\begin{align*}
- \mathcal{I}_{\tau}(u_{j},\Phi_{\overline{y}},\overline{x}) & \leq \frac{f(\overline{x})}{\sigma_{1}(|\xi_{\overline{x}}|+c_{j}) + a(\overline{x})\sigma_{2}(|\xi_{\overline{x}}|+c_{j})}  \\
& \leq \frac{||f||_{L^{\infty}(B_{1})}}{\sigma_{2}(1+L(2+\alpha))}.
\end{align*}
Similarly, we have
\begin{equation*}
  -\mathcal{I}_{\tau}(u_{j},-\Phi_{\overline{x}},\overline{y}) \geq -\frac{||f||_{L^{\infty}(B_{1})}}{\sigma_{2}(1+L(2+\alpha))},
\end{equation*}
and consequently
\begin{equation*}
\mathcal{I}_{\tau}(u_{j},\Phi_{\overline{y}},\overline{x}) -\mathcal{I}_{\tau}(u_{j},-\Phi_{\overline{x}},\overline{y})  \geq -\frac{2||f||_{L^{\infty}(B_{1})}}{\sigma_{2}(1+L(2+\alpha))}.
\end{equation*}
At this stage, our aim is to estimate the left-hand side of inequality above. However, this argument is identical to the proof of \cite[Lemma 4.1]{PDM24}, so we will not repeat the process and give the final estimate directly
\begin{equation}\label{sec5:eq54}
- C\big( ||f||_{L^{\infty}(B_{1})}  + \mathop{osc}_{B_{1}} u + ||u||_{L^{1}_{\tau}(\mathbb{R}^{n})}   +  1  \big)  \leq  - c|\overline{x} - \overline{y}|^{1-\tau+\alpha(n+2-\tau)}.
\end{equation}
Now we choose $ 0< \alpha < 1 $ small enough such that
\begin{equation}\label{sec5:eq55}
- \widetilde{\beta} := 1- \tau + \alpha (n+2-\tau) \leq \frac{1-\tau}{2} < 0.
\end{equation}
It is easy to check that $ |\overline{x} - \overline{y}| \leq 2L^{-1} \mathop{osc}_{B_{1}} u $, which together with \eqref{sec5:eq54} and \eqref{sec5:eq55} to arrive
 \begin{equation*}
   - C_{1}\big( ||f||_{L^{\infty}(B{1})}  + \mathop{osc}_{B_{1}} u + ||u||_{L^{1}_{\tau}(\mathbb{R}^{n})}   +  1  \big)  \leq  -L,
 \end{equation*}
which is a contradiction, provided we choose $ L $ large enough. Hence, we obtain $ \Phi(\overline{x},\overline{y}) \leq 0 $. This finish the proof of proposition.
\end{proof}

Analogous to \cite[Proposition 4.1]{PDM24} we can also show the existence of an approximated viscosity solution $ u_{\infty} \in C(\overline{B_{1}}) \cap L_{\tau}^{1}(\mathbb{R}^{n})$ for equation (\ref{Pre:eq1}). For the sake of completeness, we present it here.

\begin{Proposition}
Assume \hyperref[A1]{\bf (A1)}--\hyperref[A4]{\bf (A4)} and \hyperref[A6b]{\bf (A6b)} hold. Then there exists at least an approximated viscosity solution $ u \in C^{0}(B_{1}) $ to (\ref{Pre:eq1}).
\end{Proposition}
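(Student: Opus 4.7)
The plan is to construct an approximated viscosity solution through a two-step scheme mirroring Definition \ref{def22}. Fix any sequence $(c_j)_{j\in\mathbb{N}} \subset (0,1]$ with $c_j \downarrow 0$, and for each $j$ consider the Dirichlet problem associated with \eqref{se5:eq1} under, say, zero exterior datum on $\mathbb{R}^n \setminus B_1$. Under hypothesis \hyperref[A6b]{\bf (A6b)}, the prefactor $H_j(x,p) := \sigma_1(|p|+c_j) + a(x)\sigma_2(|p|+c_j)$ is continuous, strictly positive, and uniformly bounded above for each fixed $j$ by $\sigma_1(c_j) + \|a\|_{L^{\infty}(B_1)}\sigma_2(c_j)$, using monotonicity of $\sigma_i$ and the fact that $c_j>0$. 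Hence \eqref{se5:eq1} is equivalent to the non-local equation $-\mathcal{I}_\tau(u_j,x) = f(x)/H_j(x,Du_j)$, with bounded continuous right-hand side and uniformly elliptic operator, which falls within the classical theory developed by Caffarelli--Silvestre \cite{LL09} and Barles--Imbert \cite{BCI11}.

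Next, I would obtain existence of a viscosity solution $u_j \in C^0(\overline{B_1}) \cap L^1_\tau(\mathbb{R}^n)$ for each $j$ via Perron's method for integro-differential equations, once the comparison principle is in place; the latter is known in this setting because $H_j$ is continuous, bounded, and bounded away from zero for fixed $j$. Placing multiples of $1-|x|^2$ and constant functions as barriers yields a uniform $L^{\infty}(B_1)$ bound on $u_j$ independent of $j$, depending only on $\|f\|_{L^\infty}$, $\|a\|_{L^\infty}$, and $c_0$. Proposition \ref{se5:pro1} then supplies a uniform Lipschitz estimate $\|u_j\|_{C^{0,1}(B_{1/2})} \leq C$ with $C$ independent of $j$, so that Arzel\`a--Ascoli extracts a subsequence, still denoted $(u_j)$, converging locally uniformly in $B_1$ to some $u_\infty \in C^0(\overline{B_1})$.

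Finally, the three defining properties of Definition \ref{def22} are immediate: local uniform convergence $u_j \to u_\infty$ is the Arzel\`a--Ascoli conclusion; $c_j \to 0$ holds by construction; and the tail growth $\|u_j\|_{L^1_\tau(\mathbb{R}^n)} \leq C_1$ is automatic since each $u_j$ vanishes outside $B_1$, so the bound passes to the limit. The principal technical hurdle is verifying that Perron's method and the comparison principle apply to the regularized equation \eqref{se5:eq1} with its gradient-dependent prefactor; but hypothesis \hyperref[A6b]{\bf (A6b)} is designed precisely to keep $H_j$ bounded above and bounded away from zero for each fixed $j > 0$, placing \eqref{se5:eq1} squarely within the well-posedness framework for non-local equations with bounded continuous ingredients, as developed in \cite{LL09, BCI11, PDM24}. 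This completes the construction, so $u_\infty$ is the desired approximated viscosity solution.
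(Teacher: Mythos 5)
Your construction mirrors the paper's own proof almost exactly: for each $j$ solve a regularized Dirichlet problem with $c_j \downarrow 0$, apply the uniform Lipschitz estimate of Proposition \ref{se5:pro1}, and pass to the limit via Arzel\`a--Ascoli; then read off the three conditions of Definition \ref{def22}. The one substantive difference is the exterior datum: you take zero while the paper allows a general $g$ with $|g(x)| \leq 1+|x|^{1+\alpha}$, $1+\alpha \in (0,\tau)$, which matches the growth condition in Definition \ref{def22}(2); your choice trivially satisfies it. However, there is one inaccuracy worth flagging in your reduction to the ``uniformly elliptic with bounded ingredients'' framework. You assert that \hyperref[A6b]{\textbf{(A6b)}} makes $H_j(x,p) = \sigma_1(|p|+c_j) + a(x)\sigma_2(|p|+c_j)$ \emph{bounded away from zero} for fixed $j$, and hence that $f/H_j$ is bounded. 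This is false in general: under \hyperref[A6b]{\textbf{(A6b)}} the $\sigma_i$ are decreasing but may tend to $0$ at infinity (e.g.\ $\sigma_1(t)=t^{\widehat{p}}$ with $-1 < \widehat{p} < 0$, $a \equiv 0$ gives $H_j(x,p)=(|p|+c_j)^{\widehat{p}} \to 0$ as $|p|\to\infty$). What \hyperref[A6b]{\textbf{(A6b)}} does give is a \emph{uniform upper bound} $H_j \leq c_0(1+\|a\|_{L^\infty})$, but not a lower bound independent of $p$. Consequently, you cannot claim the right-hand side $f/H_j$ is bounded, and the regularized equation remains genuinely degenerate elliptic rather than uniformly elliptic in the sense you invoke. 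The fix is what the paper actually does: the existence for the regularized Dirichlet problem should be justified by citing Barles--Chasseigne--Imbert \cite{BCI08}, which handles this degenerate class directly, rather than by forcing the equation into the bounded-coefficient framework. Once that citation is in place, the rest of your argument (uniform Lipschitz bound via Proposition \ref{se5:pro1}, Arzel\`a--Ascoli, verification of Definition \ref{def22}) goes through as written and coincides with the paper's proof.
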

\begin{proof}
Consider the following Dirichlet problem
\begin{equation*}
\left\{
     \begin{aligned}
     &   - \bigg( \sigma_{1}(|Du_{j}|+\frac{1}{j})  + a(x) \sigma_{2}(|Du_{j}|+ \frac{1}{j})    \bigg) \mathcal{I}_{\tau}(u_{j},x) = f(x) \ \ \text{in} \ \ B_{1}       \\
     &    u_{j}(x) = g(x) \ \ \ \ \ \ \ \ \ \ \ \ \ \ \text{in}  \ \ \mathbb{R}^{n} \setminus B_{1}    ,                  \\
     \end{aligned}
     \right.
\end{equation*}
where the boundary datum satisfies
\begin{equation*}
  | g(x) |  \leq 1+ |x|^{1+\alpha} \ \ \text{for} \ \ x \in \mathbb{R}^{n}
\end{equation*}
with any $ 1+ \alpha \in (0, \tau) $. The existence of viscosity solution $ u_{j} $ for equation above follows from \cite{BCI08}, since the non-local operator $ \mathcal{I}_{\tau} $ here is uniformly elliptic. In the spirit of Proposition \ref{se5:pro1}, we have $ u_{j} \in C^{0,1}_{loc}(B_{1}) $. Thereby, there exists $ u_{\infty} \in C^{0,\widetilde{\alpha}}_{loc}(B_{1})$ for some $ \widetilde{\alpha} \in (0,1) $ such that $ u_{j}   $ converges locally uniformly to $ u_{\infty} $ in $ B_{1} $ and $ u_{\infty} = g $ in $  \mathbb{R}^{n} \setminus B_{1} $. By taking $ c_{j} : = \frac{1}{j} $, then from Definition \ref{def22} in Section \ref{Section 2}, we know $ u_{\infty} $ is an approximated viscosity solution to (\ref{Pre:eq1}).
\end{proof}

Resorting to Proposition \ref{se5:pro1}, we will show that interior $ C^{1,\beta_{0}} $ regularity of viscosity solution $ (u_{j})_{j\in \mathbb{N}} $ to equation (\ref{se5:eq1}).

\begin{Corollary}\label{se5:coro1}
Under the assumptions of Proposition \ref{se5:pro1}, then $ u_{j} \in C^{1,\beta_{0}}_{loc}(B_{1}) $ with the estimate
  \begin{equation*}
    ||u_{j}||_{C^{1,\beta_{0}}(B_{1/2})} \leq C(||u_{j}||_{L^{\infty}(B_{1})}+||f||_{L^{\infty}(B_{1})}),
  \end{equation*}
  where $ C = C (n,c_{0},\lambda,\Lambda,\tau) $ is a positive constant.
\end{Corollary}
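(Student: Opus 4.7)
The plan is to reduce equation \eqref{se5:eq1} to a translation-invariant, uniformly elliptic non-local equation with bounded right-hand side, and then invoke the Caffarelli--Silvestre interior $C^{1,\beta_{0}}$ estimate from \cite{LL09}. First, I would apply Proposition \ref{se5:pro1} to produce a Lipschitz constant $L_{*}>0$ for $u_{j}$ on $B_{1/2}$ that is independent of $j$, depending only on the structural data together with $\|u_j\|_{L^\infty(B_1)}, \|f\|_{L^\infty(B_1)}$ and $\|u_j\|_{L^1_\tau(\mathbb{R}^n)}$. The key consequence is the standard viscosity fact that any $C^{2}$ test function $\varphi$ touching $u_{j}$ from either side at $x_{0}\in B_{1/2}$ is forced to satisfy $|D\varphi(x_{0})|\le L_{*}$.

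Next, assumption \hyperref[A6b]{\textbf{(A6b)}} gives that $\sigma_{1},\sigma_{2}$ are continuous and monotone decreasing on $\mathbb{R}_{+}$ with $\lim_{t\to 0^{+}}\sigma_{i}(t)=c_{0}>0$. Combined with $c_{j}\in(0,1]$ and the Lipschitz bound, there exist constants $0<m\le M<\infty$, depending on $c_{0}$ and $L_{*}$ but not on $j$, such that
$$
m\le \sigma_{1}(|\xi|+c_{j})+a(x)\sigma_{2}(|\xi|+c_{j})\le M\quad\text{for all }|\xi|\le L_{*},\ x\in B_{1/2}.
$$
In particular, at every admissible viscosity touching point the coefficient is pinched between $m$ and $M$. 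Dividing the viscosity inequality by this strictly positive deterministic factor then shows that $u_{j}$ is, in the sense of \cite{LL09}, a viscosity solution of the translation-invariant equation
$$
\mathcal{I}_{\tau}(u_{j},x)=g_{j}(x)\ \text{in }B_{1/2},\qquad \|g_{j}\|_{L^{\infty}(B_{1/2})}\le m^{-1}\|f\|_{L^{\infty}(B_{1})},
$$
with a bound uniform in $j$.

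Finally, the Caffarelli--Silvestre interior $C^{1,\beta_{0}}$ estimate for fully nonlinear non-local operators of the form \eqref{I_tau} with kernels in $\mathcal{K}_{0}$ and order $\tau\in (1,2)$ applies directly to the reduced equation. This yields the existence of $\beta_{0}=\beta_{0}(n,\lambda,\Lambda,\tau)\in(0,1)$ together with a constant $C=C(n,c_{0},\lambda,\Lambda,\tau)$ such that
$$
\|u_{j}\|_{C^{1,\beta_{0}}(B_{1/2})}\le C\bigl(\|u_{j}\|_{L^{\infty}(B_{1})}+\|g_{j}\|_{L^{\infty}(B_{1/2})}\bigr)\le C\bigl(\|u_{j}\|_{L^{\infty}(B_{1})}+\|f\|_{L^{\infty}(B_{1})}\bigr),
$$
as required. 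The main obstacle is the rigorous justification of the division step: one must verify that the uniform Lipschitz bound genuinely confines every admissible test-function slope to the range $[0,L_{*}]$, so that the potentially singular coefficient $\sigma_{1}+a\sigma_{2}$ is never evaluated where its behaviour could be lost and no viscosity testing is discarded. Once this reduction is secured, the corollary is a direct consequence of the existing non-local elliptic theory.
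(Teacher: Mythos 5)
Your proposal follows the paper's own argument: both use the $j$-uniform Lipschitz bound from Proposition \ref{se5:pro1} together with assumption \hyperref[A6b]{\bf (A6b)} to bound the coefficient $\sigma_{1}(|Du_{j}|+c_{j})+a(x)\sigma_{2}(|Du_{j}|+c_{j})$ below by a $j$-independent positive constant, divide the equation to obtain a uniform bound $|\mathcal{I}_{\tau}(u_{j},x)|\leq C||f||_{L^{\infty}(B_{1})}$, and then invoke the Caffarelli--Silvestre interior $C^{1,\beta_{0}}$ estimate (the paper cites \cite[Theorem 52]{LL11} rather than \cite{LL09}, a minor distinction since the kernels here are translation-invariant). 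Your asserted $j$-uniform upper bound $M$ on the coefficient is never actually used and would fail for the singular prototypes $\sigma_{i}(t)=t^{q}$, $q<0$, as $c_{j}\to 0$; since only the lower bound is needed for the division step, this over-claim is harmless.
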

\begin{proof}
We note from \hyperref[A6b]{\bf (A6b)} that $ \sigma_{i}(t) \geq \frac{c_{0}}{2} $ for $ 0 < t <t_{0} $. By Proposition \ref{se5:pro1} and $ c_{j} \leq 1 $, it allows
\begin{equation*}
  \sigma_{2}(|Du_{j}|+c_{j}) \geq \frac{c_{0}}{2}.
\end{equation*}
From \hyperref[A4]{\bf (A4)} and (\ref{se5:eq1}), we obtain
\begin{equation*}
  - \mathcal{I}_{\tau}(u_{j},x) \leq \frac{2}{c_{0}} ||f||_{L^{\infty}(B_{1})}, \ \ \text{and} \ \ - \mathcal{I}_{\tau}(u_{j},x) \geq -\frac{2}{c_{0}} ||f||_{L^{\infty}(B_{1})}.
\end{equation*}
Hence by \cite[Theorem 52]{LL11}, we have $ u_{j} \in C^{1,\beta_{0}}_{loc}(B_{1}) $ with the estimate
 \begin{equation*}
    ||u_{j}||_{C^{1,\beta_{0}}(B_{1/2})} \leq C(||u_{j}||_{L^{\infty}(B_{1})}+||f||_{L^{\infty}(B_{1})}).
  \end{equation*}
 This completes the proof of the Corollary \ref{se5:coro1}.
\end{proof}

With the help of Corollary \ref{se5:coro1}, we shall prove the interior $ C^{1,\beta_{0}} $ regularity of solution to (\ref{Pre:eq1}).

\begin{proof}[Proof of Theorem~\ref{thm3}]
From Definition \ref{def22}, there are sequences $(u_{j})_{j\in\mathbb{N}} \in C(\overline{B_{1}})\cap L_{\tau}^{1}(\mathbb{R}^{n})  $, $ (c_{j})_{j\in\mathbb{N}} \in \mathbb{R}^{+}  $, $ c_{j} \rightarrow 0 $ and $ u_{j} \rightarrow u $ locally uniformly in $ B_{1} $, such that $ u_{j} $ is a viscosity solution to
\begin{equation*}
 - \bigg( \sigma_{1}(|Du_{j}|+c_{j})  + a(x) \sigma_{2}(|Du_{j}|+c_{j})    \bigg) \mathcal{I}_{\tau}(u_{j},x) = f(x) \ \ \text{in} \ \ B_{1}.
 \end{equation*}
From Corollary \ref{se5:coro1}, it follows that $ u_{j} \in C^{1,\beta_{0}}_{loc}(B_{1}) $ and
 \begin{equation}\label{se5:eq3}
    ||u_{j}||_{C^{1,\beta_{0}}(B_{1/2})} \leq C(||u_{j}||_{L^{\infty}(B_{1})}+||f||_{L^{\infty}(B_{1})}).
  \end{equation}
Taking the limit $ j \rightarrow \infty $ in (\ref{se5:eq3}) and combining the fact $ u_{j} \rightarrow u $ locally uniformly in $ B_{1} $, there holds
$ u \in C^{1,\beta_{0}}_{loc}(B_{1}) $ and
\begin{equation*}
    ||u||_{C^{1,\beta_{0}}(B_{1/2})} \leq C(||u||_{L^{\infty}(B_{1})}+||f||_{L^{\infty}(B_{1})}).
\end{equation*}
The proof of Theorem~\ref{thm3} is completed.
\end{proof}

\vspace{3mm}

\section{Borderline regularity of a non-local $ p$-Laplacian equation}\label{section 6}

In this section, we turn to study a degenerate non-local normalized $ p$-Laplacian equation. Borderline regularity of the solutions are proved similarly to Section \ref{Section 3}. Therefore, the result of this section can be regarded as a byproduct of the result in Section \ref{Section 3}.

Before proceeding, we will give some notations and definitions which will be used in this section.
Recall that $ \Delta_{p,N}^{s}u $ denotes the non-local normalized $ p$-Laplacian operator given by
\begin{equation}\label{Sec6:eq111}
\Delta_{p,N}^{s}u := (1-s)
\sup_{\xi_{1}\in \mathbf{A}}\inf_{\xi_{2}\in \mathbf{A}} \bigg(\int_{\mathbb{R}^{n}}\frac{[u(x+y)-u(x)]K_{\xi_{1}}(y)+[u(x-y)-u(x)]K_{\xi_{2}}(y)}{|y|^{n+2s}}dy\bigg),
\end{equation}
with
\begin{equation*}
 s \in (0,1), \ \ \mathbf{A}:=S^{n-1}, \ \  K_{\xi}(y):=\left\{
     \begin{aligned}
     &   \frac{1}{\alpha_{p}} \mathbb{I}_{[c_{p},1]}\bigg( \frac{y}{|y|}\cdot \xi\bigg), \ \ p \geq 2 ,   \\
     &   \frac{1}{\alpha_{p}} \mathbb{I}_{[0,c_{p}]}\bigg( \frac{y}{|y|}\cdot \xi\bigg), \ \  1<p<2           .                 \\
     \end{aligned}
     \right.
\end{equation*}
We only restrict ourselves to the case $ s \in (1/2,1) $ and $ p \in [2,\infty) $. While all of results hold for the case $ p \in (1,2) $, see, for example, \cite[Section 4]{CLA12}.

To describe the definition of viscosity solutions to
\begin{equation}\label{App:eq1}
  - \bigg( \sigma_{1}(|Du|)  + a(x) \sigma_{2}(|Du|) \bigg)  \Delta_{p,N}^{s}u = f(x) \ \ \text{in} \ \ B_{1},
\end{equation}
we first define $ \Delta_{p,N,+}^{s} $ and $ \Delta_{p,N,-}^{s} $ in the following way :

$ \bullet $ If $ \nabla u(x) \neq 0 $, then
\begin{align*}
\Delta_{p,N,+}^{s} & = \Delta_{p,N,-}^{s}     \\
&  : = \frac{1-s}{\alpha_{p}} \int_{\mathbb{R}^{n}} \frac{[u(x+y)+u(x-y)-2u(x)]\mathbb{I}_{[c_{p},1]}\big(\frac{y}{|y|}\cdot \xi     \big)}{|y|^{n+2s}} dy, \ \ \xi = \frac{\nabla u(x)}{|\nabla u(x)|}.
\end{align*}

$ \bullet $ If $ \nabla u(x) = 0 $, then
\begin{align*}
& \Delta_{p,N,+}^{s}:= \frac{1-s}{\alpha_{p}} \sup_{\xi \in S^{n-1}}\int_{\mathbb{R}^{n}} \frac{[u(x+y)+u(x-y)-2u(x)]\mathbb{I}_{[c_{p},1]}\big(\frac{y}{|y|}\cdot \xi     \big)}{|y|^{n+2s}} dy, \\
& \Delta_{p,N,-}^{s}:= \frac{1-s}{\alpha_{p}} \inf_{\xi \in S^{n-1}}\int_{\mathbb{R}^{n}} \frac{[u(x+y)+u(x-y)-2u(x)]\mathbb{I}_{[c_{p},1]}\big(\frac{y}{|y|}\cdot \xi     \big)}{|y|^{n+2s}} dy,
\end{align*}
where
\begin{equation*}
\alpha_{p} := \frac{1}{2} \int_{\partial B_{1}} (\omega \cdot e_{2})^{2} \mathbb{I}_{[c_{p},1]}(\omega \cdot e_{1})d\sigma(\omega),
\end{equation*}
and
\begin{equation*}
\beta_{p} := \frac{1}{2} \int_{\partial B_{1}} (\omega \cdot e_{1})^{2} \mathbb{I}_{[c_{p},1]}(\omega \cdot e_{1})d\sigma(\omega) - \alpha_{p}.
\end{equation*}
For $ p \geq 2 $, we choose $ c_{p} \in [0,1] $ such that $ \beta_{p} / \alpha_{p} = p-2   $. Such a choice is possible for any $ p \in [2,\infty) $.

Next we present the definition of viscosity solution to (\ref{App:eq1}).
\begin{Definition}\label{App:De1}
An upper (resp. lower) semi-continuous function $ u: \mathbb{R}^{n} \rightarrow \mathbb{R}  $ is said to be a viscosity subsolution (resp. supersolution)  to (\ref{App:eq1}) at $ x_{0} \in B_{1} $, if every time a test function $ \varphi \in C^{1,1}(x_{0}) $ touches $ u $ from above (resp. below) at $ x_{0} $, we have
\begin{align*}
  &- \bigg( \sigma_{1}(|D\varphi(x_{0})|)  + a(x_{0}) \sigma_{2}(|D\varphi(x_{0})|) \bigg)  \Delta_{p,N}^{s} \widetilde{u}(x_{0}) \leq f(x_{0}),   \\
  & \bigg( resp. - \bigg( \sigma_{1}(|D\varphi(x_{0})|)  + a(x_{0}) \sigma_{2}(|D\varphi(x_{0})|) \bigg)  \Delta_{p,N}^{s} \widetilde{u}(x_{0}) \geq f(x_{0}) \bigg).
\end{align*}
where $ \widetilde{u} $ is defined as
\begin{equation*}
\widetilde{u}(x):=\left\{
     \begin{aligned}
     &   \varphi(x), \ \ \text{if} \ x \in B_{r}(x_{0}),     \\
     &  u(x), \ \  \text{if} \  x \in \mathbb{R}^{n} \setminus B_{r}(x_{0}).                \\
     \end{aligned}
     \right.
\end{equation*}
If a function is both a viscosity subsolution and supersolution to (\ref{App:eq1}), we say it is a viscosity solution to (\ref{App:eq1}).
\end{Definition}

We now formulate the main result of this section.
\begin{Theorem}\label{thm4}(borderline regularity in the degenerate case)
Let $ u \in C^{0}(\overline{B}_{1}) \cap L_{s}^{1}(\mathbb{R}^{n}) $ be a bounded viscosity solution to (\ref{App:eq1}), assume \hyperref[A3]{\bf (A3)}, \hyperref[A4]{\bf (A4)}, \hyperref[A6a]{\bf (A6a)}, \hyperref[A7]{\bf (A7)} hold, $ s \in (1/2,1)$ and $p \geq2$. Then there exists $ s_{0} \in (1/2,1) $ sufficiently close to $ 1 $ such that if $ s \in (s_{0},1) $, then $ u \in C^{1}_{loc}(B_{1})$. Moreover, there exists a modulus of continuity $ \omega: \mathbb{R}_{0}^{+} \rightarrow  \mathbb{R}_{0}^{+}$ depending only on $ n, s, \sigma_{1}, \sigma_{2}, ||u||_{L^{\infty}(B_{1})}$ and $||f||_{L^{\infty}(B_{1})} $ such that
\begin{equation*}
 |Du(x)-Du(y)| \leq \omega (|x-y|),
\end{equation*}
for every $ x,y \in B_{1/4} $.
\end{Theorem}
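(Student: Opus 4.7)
The plan is to mirror verbatim the three-step strategy developed in Section~\ref{Section 3} for the fully nonlinear operator $\mathcal{I}_\tau$, replacing $\mathcal{I}_\tau$ by the non-local normalized $p$-Laplacian $\Delta_{p,N}^{s}$. Since $\Delta_{p,N}^{s}$ admits the min--max structure \eqref{Sec6:eq111} with kernels trapped between constant multiples of $|y|^{-(n+2s)}$ (once one includes the normalizing factor $(1-s)$), every ingredient from Section~\ref{Section 3} -- the Jensen--Ishii lemma, the shored-up modulus construction, and the limiting cancellation -- transfers with only cosmetic changes. First, I would prove the non-local p-Laplacian analogue of Proposition \ref{Prop3.1}: for any $\xi\in\mathbb{R}^n$, a normalized viscosity solution of the shifted equation
\begin{equation*}
-\bigl(\sigma_1(|Du+\xi|)+a(x)\sigma_2(|Du+\xi|)\bigr)\Delta_{p,N}^{s}u=f(x)\ \text{in}\ B_1
\end{equation*}
is locally Lipschitz in $B_{1/2}$, via the doubling variables method and Lemma~\ref{lem2.3} applied to the min--max representation of $\Delta_{p,N}^s$, splitting large and small slopes exactly as in \cite{PDM24}.

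Second, I would establish an approximation lemma analogous to Proposition \ref{prop:3.2}. The core point is that as $s\to 1^-$, the rescaled operators $\Delta_{p,N}^{s}$ converge (in the viscosity sense) to the local normalized $p$-Laplacian $\Delta_p^N$; see \cite{CLA12}. Arguing by contradiction with sequences $\{s_k\}\to 1$, $\{u_k\},\{\xi_k\},\{f_k\}$ satisfying \textbf{(A1)}--\textbf{(A4)} and \textbf{(A6a)}, by the Lipschitz estimate of the previous step one extracts a subsequence converging locally uniformly to some $u_\infty$. Splitting into the cases $|\xi_k|\to\infty$, $\xi_k\to\xi_0$ with $|b+\xi_0|>0$, and $|b+\xi_0|=0$ just as in Steps~3--4 of the proof of Proposition~\ref{prop:3.2}, one then passes to the limit in the viscosity inequalities, using the non-collapsing property of $\sigma_1+a(\cdot)\sigma_2$ together with the cancellation
\begin{equation*}
\bigl(\sigma_1(|Du|)+a(x)\sigma_2(|Du|)\bigr)\Delta_p^N u=0\ \Longrightarrow\ \Delta_p^N u=0,
\end{equation*}
to conclude that $u_\infty$ is normalized $p$-harmonic. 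The known interior $C^{1,\alpha_0}$-regularity of normalized $p$-harmonic functions (with a universal exponent $\alpha_0\in(0,1)$, cf.\ Attouchi--Parviainen--Ruosteenoja and subsequent works) then supplies the approximating affine function, giving the counterpart of Proposition~\ref{prop3.3}.

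Third, I would iterate. The sequence of moduli $(\sigma_1^k)_{k\in\mathbb{N}}$ defined in \eqref{loc:eq7} is intrinsic to the degenerate coefficient and does not depend on the underlying operator, so Lemma~\ref{Au:prop1} and Lemma~\ref{Au:lem23} yield, via Definition~\ref{Def23}, a shored-up (hence non-collapsing) family. Setting $v_k(x)=(u_{k-1}(rx)-\ell_{k-1}(rx))/(\mu_k r)$, one checks -- using the scale invariance of $\Delta_{p,N}^s$ under translation and dilation, which follows directly from \eqref{Sec6:eq111} -- that $v_k$ solves an equation of the same form with $\sigma_1^k$ in place of $\sigma_1^0$. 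The induction $|v_k(x)|\le\mathcal{M}(1+|x|^{1+\alpha_0})$ then propagates, and Proposition~\ref{Prop:34} carries over, producing affine functions $\ell_k(x)=a_k+b_k\cdot x$ with $|a_{k+1}-a_k|\le C(\prod_{i\le k}\mu_i)r^k$ and $|b_{k+1}-b_k|\le C\prod_{i\le k}\mu_i$. Exactly as in the proof of Theorem~\ref{thm1}, the dichotomy on $(\prod_{i\le k}\mu_i)$ either gives $C^{1,\alpha}$ regularity or produces summability of $(\gamma_k)_k$, from which Cauchy-ness of $(a_k),(b_k)$ and the modulus of continuity $\Gamma$ follow.

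The main obstacle lies in the second step: one needs convergence of the $(1-s)$-normalized non-local $p$-Laplacian to the local $\Delta_p^N$ for test functions along a sequence touching the limit from below/above, uniformly enough to pass through the degenerate coefficient. This is delicate because $\Delta_{p,N}^s$ is inherently anisotropic in its integration domain (the cones $\{\omega\cdot\xi\ge c_p\}$), so the passage to the limit must carefully track the direction $\xi=Du/|Du|$ arising from the test function. Once this convergence is secured -- as is done in \cite{CLA12} -- together with the well-established $C^{1,\alpha_0}$ regularity of normalized $p$-harmonic functions, the remaining machinery of Section~\ref{Section 3} applies with no essential change and yields the claimed borderline regularity.
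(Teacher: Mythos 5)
Your proposal follows the same three-step template the paper actually uses in Section~\ref{section 6}: (i) a compactness lemma for the $\xi$-shifted equation via doubling of variables and Lemma~\ref{lem2.3}, (ii) an approximation lemma built on the convergence $\Delta_{p,N}^{s}\to\Delta_{p}^{N}$ as $s\to1^{-}$ together with the cancellation property of \cite[Lemma 2.6]{AE18}, and (iii) the iteration of Section~\ref{subsec:3.3} with the shored-up moduli $(\sigma_1^k)$. So the overall route matches.

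One substantive correction to Step~1: you claim the shifted solution is locally Lipschitz for all $\xi$, ``exactly as in \cite{PDM24}.'' The paper's Lemma~\ref{Sec6:lem1} in fact asserts only local H\"older continuity, and its proof obtains Lipschitz solely in the large-slope regime $|\xi|\geq A_0$. When $|\xi|$ is small the degenerate coefficient $\sigma_1(|q_{\overline{x}}+\xi|)$ can collapse, so the lower bound \eqref{the:eq112} that powers the Lipschitz penalization $h(t)=t-\tfrac{1}{2}t^{3/2}$ fails; the paper replaces $h$ by $h(t)=t^{\beta}$, $\beta\in(0,1)$, and settles for H\"older. This is harmless downstream (H\"older still gives the Arzel\`a--Ascoli compactness you need for Step~2), but your blanket Lipschitz claim would not survive the small-slope case as written. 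Relatedly, the paper explicitly flags that $\Delta_{p,N}^{s}$ is \emph{not} elliptic in the sense of \cite[Definition 3.1]{LL09}, so the assertion that ``every ingredient from Section~\ref{Section 3} transfers with only cosmetic changes'' understates the work: the integro-differential terms $\text{B.1}$--$\text{B.4}$ must be estimated directly against the anisotropic cone kernels $K_\xi$, with the direction $\xi=q_{\overline{x}}+\xi$ tracked by hand, exactly the point you later (and correctly) identify as the ``main obstacle.'' Beyond these two caveats the proposal is in agreement with the paper's (admittedly sketchy) proof.
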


To the best of our knowledge, the relevant result has not appeared even for the simple prototype given by the form $ -|Du|^{\gamma} \Delta_{p,N}^{s} u = 1 $, with $ \sigma_{1}(|Du|)= |Du|^{\gamma}, \gamma > 0, a(x) \equiv 0 $ and $ f \equiv 1 $. Therefore, Theorem \ref{thm4} slightly generalizes the partial regularity result of \cite[Therem 1.1]{AE18}. The obstruction of this slight extension is twofold, namely,

\begin{itemize}

\item  The operator (\ref{Sec6:eq111}) is not, in general, elliptic in the sense of \cite[Definition 3.1]{LL09}, and hence the argument of Theorem \ref{thm1} cannot be applied directly to (\ref{App:eq1}).

\item  The higher regularity of solutions associated with operators of the form (\ref{Sec6:eq111}) appears to be a very delicate question due to the subtle interaction of local and non-local effects. Additionally, the problem becomes highly degenerate as soon as $ \sigma_{i}(|Du|), i=1,2 $ vanishes.

\end{itemize}

To overcome these difficulties, here we need to combine the arguments established in Theorem \ref{thm1} with the celebrated work of Bjorland-Caffarelli-Figalli \cite{CLA12}. To simplify the exposition, we assume that $ u \in C(B_{1}) \cap L_{s}^{1}(\mathbb{R}^{n}) $ is a normalized viscosity solution if
$ ||u||_{L^{\infty}(B_{1})} \leq 1 $. Similar to Remark \ref{RK:22}, we also assume
$ ||f||_{L^{\infty}(B_{1})} \leq \epsilon $ for a given $ \epsilon > 0 $.
As mentioned before, we first prove the compactness of solutions to a variant of (\ref{App:eq1}).

\begin{Lemma}\label{Sec6:lem1}
Let $ u \in C^{0}(\overline{B}_{1}) \cap L_{s}^{1}(\mathbb{R}^{n}) $ be a normalized viscosity solution to
\begin{equation*}\label{The:eq1}
- \bigg( \sigma_{1}(|Du+\xi|)  + a(x) \sigma_{2}(|Du+\xi|)    \bigg)  \Delta_{p,N}^{s}u = f(x) \ \ \text{in} \ \ B_{1},
\end{equation*}
where $ \xi \in \mathbb{R}^{n}  $ is arbitrary, assume \hyperref[A3]{\bf (A3)}, \hyperref[A4]{\bf (A4)} and \hyperref[A6a]{\bf (A6a)} hold, and $ s \in (1/2,1), p \geq 2 $. Then $ u $ is locally H\"{o}lder continuous in $ B_{1} $, i.e., for every $ x,y \in B_{1/2} $,
\begin{equation*}
 |u(x)-u(y)| \leq C |x-y|^{\beta} \ \ \text{for some} \  \beta \in (0,1),
\end{equation*}
where the constant $ C > 0 $ is independent of $ \xi $.
\begin{proof}
We divide the proof into two different cases in which $ |\xi| $ is large or small.

Case 1: $ |\xi| \geq A_{0}$. Fixing $ r \in (0,1) $ and $ x_{0} \in B_{1/2}   $, we shall show that there exist two positive constants $ L_{1} ,L_{2} $ such that
\begin{equation}\label{Sec6:extreq1}
M:=\sup_{x,y \in B_{r}(x_{0})} \bigg( u(x)-u(y)-L_{1}h(|x-y|)-L_{2} (|x-x_{0}|^{2}+|y-x_{0}|^{2}) \bigg) \leq 0   ,
\end{equation}
where
\begin{equation}\label{Sec6:eq123}
h(t)£º=\left\{
     \begin{aligned}
     &   t-\frac{1}{2}t^{3/2} , 0\leq t < t_{0}  ,        \\
     &   h(t_{0}) , \ \    t\geq t_{0}          ,                  \\
     \end{aligned}
     \right.
\end{equation}
with $ t_{0} = \frac{16}{9}>1 $ and $ A_{0} $ is to be determined.

Now we argue by contradiction. Assuming that for any $ L_{1}, L_{2} >0  $, $ M  $ attains its positive maximum at $ (\overline{x},\overline{y}) \in B_{r}(x_{0})    $, that is to say,
\begin{equation}\label{the:eq1}
L_{1}h(|\overline{x}-\overline{y}|)+L_{2} (|\overline{x}-x_{0}|^{2}+|\overline{y}-x_{0}|^{2}) \leq 2||u||_{L^{\infty}(B_{1})} .
\end{equation}
For simplicity, we denote
$$ \varphi (x,y):= L_{1}h(|x-y|)+L_{2} (|x-x_{0}|^{2}+|y-x_{0}|^{2})   ,   $$
and
$$ \phi(x,y):=  u(x)-u(y)-\varphi (x,y).$$
Notice that $ \overline{x} \neq \overline{y} $. Indeed, if $ \overline{x} = \overline{y} $, then it is easily seen that $ M \leq 0 $, which contradicts with the positivity of $ M $.

Now choosing $ L_{2} =\frac{32}{r^{2}}||u||_{L^{\infty}(B_{1})}  $, since (\ref{the:eq1}), we have
\begin{equation*}
|\overline{x}-x_{0}|+|\overline{y}-y_{0}| < \frac{1}{2}r.
\end{equation*}
This implies that $ \overline{x},\overline{y}\in B_{r}(x_{0})    $.

By Lemma \ref{lem2.3}, we can ensure the existence of limiting subjet $ (q_{\overline{x}},\text{X})$ of $ u $ at $ \overline{x} $ and limiting superjet $ (q_{\overline{y}},\text{Y})$ of $ u $ at $ \overline{y} $ and the following matrix inequality holds:
\begin{equation}\label{the:eq2}
-\frac{1}{\delta} \textbf{I} \leq
\begin{pmatrix}
\text{X}  &   0   \\
0  &    -\text{Y}
\end{pmatrix}
\leq
\begin{pmatrix}
\text{Z}   &   -\text{Z}   \\
-\text{Z}  &    \text{Z}
\end{pmatrix}
 + (2L_{2}+\delta)\textbf{I} , \ \  0 < \delta \ll 1 ,
\end{equation}
where
\begin{equation*}
\label{eq4.10}
\text{Z}=L_{1}h''(|\overline{x}-\overline{y}|)\frac{\overline{x}-\overline{y}}{|\overline{x}-\overline{y}|}\otimes \frac{\overline{x}-\overline{y}}{|\overline{x}-\overline{y}|} +\frac{L_{1}h'(|\overline{x}-\overline{y}|)}{|\overline{x}-\overline{y}|}\left( \textbf{I}- \frac{\overline{x}-\overline{y}}{|\overline{x}-\overline{y}|}\otimes \frac{\overline{x}-\overline{y}}{|\overline{x}-\overline{y}|} \right),
\end{equation*}
and
\begin{equation}\label{the:eq3}
\left\{
     \begin{aligned}
     & q_{\overline{x}}:=\partial_{\overline{x}}\varphi(\overline{x},\overline{y}) =L_{1}h'(|\overline{x}-\overline{y}|)\frac{\overline{x}-\overline{y}}{|\overline{x}-\overline{y}|}+2L_{2}(\overline{x}-x_{0}) ,      \\
     & q_{\overline{y}}:=-\partial_{\overline{y}}\varphi(\overline{x},\overline{y}) =L_{1}h'(|\overline{x}-\overline{y}|)\frac{\overline{x}-\overline{y}}{|\overline{x}-\overline{y}|}-2L_{2}(\overline{y}-x_{0}) .           \\
     \end{aligned}
     \right.
\end{equation}

Then we obtain the following two viscosity inequalities :
\begin{equation}\label{the:eq6}
- \bigg( \sigma_{1}(|q_{\overline{x}}+\xi|) + a(\overline{x}) \sigma_{2}(|q_{\overline{x}}+\xi|)    \bigg) \Delta_{p,N}^{s} \widetilde{u}(\overline{x}) \leq ||f||_{L^{\infty}(B_{1})},
\end{equation}
and
\begin{equation}\label{the:eq7}
-\bigg( \sigma_{1}(|q_{\overline{y}}+\xi|) + a(\overline{y}) \sigma_{2}(|q_{\overline{y}}+\xi|)    \bigg)  \Delta_{p,N}^{s} \widetilde{u}(\overline{y}) \geq -||f||_{L^{\infty}(B_{1})}.
\end{equation}
Subtracting (\ref{the:eq6}) from (\ref{the:eq7}) yields
\begin{align}\label{the:eq8}\begin{split}
 &\underbrace{ \bigg ( \Delta_{p,N}^{s} \widetilde{u}(\overline{x})-\Delta_{p,N}^{s} \widetilde{u}(\overline{y})\bigg)}_{:=\text{B}} \\
 &\geq \underbrace{-||f||_{L^{\infty}(B_{1})} \bigg(\frac{1}{\sigma_{1}(|q_{\overline{x}}+\xi|) + a(\overline{x}) \sigma_{2}(|q_{\overline{x}}+\xi|)} + \frac{1}{\sigma_{1}(|q_{\overline{y}}+\xi|) + a(\overline{y}) \sigma_{2}(|q_{\overline{y}}+\xi|)}\bigg)}_{:=\text{D}}. \end{split}
\end{align}

(1) The estimate of $ \text{B}$. We split the term $  \text{B} = \text{B.1} + \text{B.2} + \text{B.3} + \text{B.4}    $, where
\begin{equation*}
\begin{aligned}
&& \text{B.1} & =\frac{1-s}{\alpha_{p}} \bigg(\int_{B_{r_{0}}}\frac{\big[\varphi(\overline{x}+z,\overline{y})+\varphi(\overline{x}-z,\overline{y})-2\varphi(\overline{x},\overline{y})\big]\mathbb{I}_{[c_{p},1]}\big(\frac{z}{|z|}\cdot\frac{q_{\overline{x}}+\xi}{|q_{\overline{x}}+\xi|} \big)}{|z|^{n+2s}}dz\bigg), \\
&& \text{B.2} & =-\frac{1-s}{\alpha_{p}} \bigg(\int_{B_{r_{0}}}\frac{\big[\varphi(\overline{x},\overline{y}+z)+\varphi(\overline{x},\overline{y}-z)-2\varphi(\overline{x},\overline{y})\big]\mathbb{I}_{[c_{p},1]}\big(\frac{z}{|z|}\cdot\frac{q_{\overline{y}}+\xi}{|q_{\overline{y}}+\xi|} \big)}{|z|^{n+2s}}dz\bigg), \\
&& \text{B.3} & = \frac{1-s}{\alpha_{p}} \bigg(\int_{\mathbb{R}^{n}\setminus B_{r_{0}}}\frac{\big[u(\overline{x}+z)+u(\overline{x}-z)-2u(\overline{x})\big]\mathbb{I}_{[c_{p},1]}\big(\frac{z}{|z|}\cdot\frac{Du(\overline{x})+\xi}{|Du(\overline{x})+\xi|} \big)}{|z|^{n+2s}}dz\bigg), \\
&& \text{and}     \\
&& \text{B.4} & = - \frac{1-s}{\alpha_{p}} \bigg(\int_{\mathbb{R}^{n}\setminus B_{r_{0}}}\frac{\big[u(\overline{y}+z)+u(\overline{y}-z)-2u(\overline{y})\big]\mathbb{I}_{[c_{p},1]}\big(\frac{z}{|z|}\cdot\frac{Du(\overline{y})+\xi}{|Du(\overline{y})+\xi|} \big)}{|z|^{n+2s}}dz\bigg).
\end{aligned}
\end{equation*}

Estimate for the term $ \text{B.1} $. Due to the formula of $ \varphi (x,y) $, it yields that
\begin{align}\label{the:eq10}
\text{B.1} &=  \frac{L_{1}(1-s)}{\alpha_{p}} \int_{B_{r_{0}}}\frac{\big[ h(|\overline{x}+z-\overline{y}|) +h(|\overline{x}-z-\overline{y}|)-2h(|\overline{x}-\overline{y}|)\big]\mathbb{I}_{[c_{p},1]}\big(\frac{z}{|z|}\cdot\frac{q_{\overline{x}}+\xi}{|q_{\overline{x}}+\xi|} \big)}{|z|^{n+2s}}dz     \\
& \ \ \ \ + \frac{2L_{2}(1-s)}{\alpha_{p}} \int_{B_{r_{0}}} \frac{1}{|z|^{n+2s-2}}\mathbb{I}_{[c_{p},1]}\bigg(\frac{z}{|z|}\cdot\frac{q_{\overline{x}}+\xi}{|q_{\overline{x}}+\xi|} \bigg)dz,  \nonumber  \\
&  = : \text{F.1} + \text{F.2}  \nonumber  .
\end{align}

For the term $ \text{F.1}   $, using a second-order Taylor expansion and integrating by parts, it follows that
\begin{align}\label{the:eq11}
\begin{split}
\text{F.1}&=  \frac{L_{1}(1-s)}{\alpha_{p}} D^{2}h(|\overline{x}-\overline{y}|) : \int_{B_{r_{0}}} \frac{z\otimes z}{|z|^{n+2s}}\mathbb{I}_{[c_{p},1]}\bigg(\frac{z}{|z|}\cdot\frac{q_{\overline{x}}+\xi}{|q_{\overline{x}}+\xi|} \bigg)dz     \\
& \ \ \ + \frac{L_{1}(1-s)}{\alpha_{p}} \int_{B_{r_{0}}} \frac{o(|z|^{2})}{|z|^{n+2s}}\mathbb{I}_{[c_{p},1]}\bigg(\frac{z}{|z|}\cdot\frac{q_{\overline{x}}+\xi}{|q_{\overline{x}}+\xi|} \bigg)dz \\
&= r_{0}^{2-2s} \frac{L_{1}(1-s)}{2\alpha_{p}} D^{2}h(|\overline{x}-\overline{y}|) : \int_{\partial B_{1}} \omega \otimes \omega\mathbb{I}_{[c_{p},1]}\bigg(\omega\cdot\frac{q_{\overline{x}}+\xi}{|q_{\overline{x}}+\xi|} \bigg)dz   \\
& \ \ \ + \frac{L_{1}(1-s)}{\alpha_{p}} \int_{B_{r_{0}}}   \frac{o(|z|^{2})}{|z|^{n+2s}}\mathbb{I}_{[c_{p},1]}\bigg(\frac{z}{|z|}\cdot\frac{q_{\overline{x}}+\xi}{|q_{\overline{x}}+\xi|} \bigg)dz  \\
& = r_{0}^{2-2s} L_{1} \text{Tr} \bigg( \big( \textbf{I}+ \frac{\beta_{p}}{\alpha_{p}} \frac{q_{\overline{x}}+\xi}{|q_{\overline{x}}+\xi|}\otimes \frac{q_{\overline{x}}+\xi}{|q_{\overline{x}}+\xi|}     \big) D^{2}h(|\overline{x}-\overline{y}|) \bigg)  \\
& \ \ \ + \frac{L_{1}(1-s)}{\alpha_{p}} \int_{B_{r_{0}}}   \frac{o(|z|^{2})}{|z|^{n+2s}}\mathbb{I}_{[c_{p},1]}\bigg(\frac{z}{|z|}\cdot\frac{q_{\overline{x}}+\xi}{|q_{\overline{x}}+\xi|} \bigg)dz \\
& = : \text{F.1.1} + \text{F.1.2} \ .
\end{split}
\end{align}
For the term $ \text{F.1.1} $, direct computations yield that
\begin{align*}
\bigg \langle D^{2}h(|\overline{x}-\overline{y}|)\widehat{\delta}, \widehat{\delta} \bigg \rangle & = \bigg \langle \bigg( h''(|\overline{x}-\overline{y}|)\widehat{\delta} \otimes \widehat{\delta} +\frac{h'(|\overline{x}-\overline{y}|)}{|\overline{x}-\overline{y}|}( \textbf{I}- \widehat{\delta}\otimes \widehat{\delta}) \bigg)\widehat{\delta}, \widehat{\delta} \bigg \rangle   \\
& = -\widehat{\delta}^{T}\big( \widehat{\delta} \otimes \widehat{\delta}\big)\widehat{\delta} +\frac{1-|\overline{x}-\overline{y}|}{|\overline{x}-\overline{y}|} \widehat{\delta}^{T} \big( \textbf{I}- \widehat{\delta} \otimes \widehat{\delta} \big)\widehat{\delta}=-1 <0,
\end{align*}
where $ \widehat{\delta} =\frac{\overline{x}-\overline{y}}{|\overline{x}-\overline{y}|}    $. This implies that at least one eigenvalue of $ D^{2}h(|\overline{x}-\overline{y}|) $ is equal $ -1 $. Thus it reads
\begin{align}\label{the:eq12}
\begin{split}
\text{F.1.1} & = r_{0}^{2-2s} L_{1} \sum_{i=1}^{n} \lambda_{i} \bigg(\textbf{I}+ \frac{\beta_{p}}{\alpha_{p}} \frac{q_{\overline{x}}+\xi}{|q_{\overline{x}}+\xi|}\otimes \frac{q_{\overline{x}}+\xi}{|q_{\overline{x}}+\xi|}\bigg) \lambda_{i} (D^{2}h(|\overline{x}-\overline{y}|))    \\
& \leq -r_{0}^{2-2s} \min \bigg\{1,p-1\bigg\} L_{1}.
\end{split}
\end{align}
For the term $ \text{F.1.2} $, it is easy to check
\begin{equation}\label{the:eq13}
\text{F.1.2} \leq C L_{1} o(1),
\end{equation}
for some positive constant $ C $.

For the term $ \text{F.2} $, simple calculations yield that
\begin{equation}\label{the:eq14}
\text{F.2} \leq \frac{CL_{2}}{\alpha_{p}}r_{0}^{2-2s}, \ \text{for some constant} \ C>0.
\end{equation}
Hence, we combine the above estimate (\ref{the:eq10})--(\ref{the:eq13}) and (\ref{the:eq14}) to obtain
\begin{align}\label{the:eq15}
\text{B.1} \leq -r_{0}^{2-2s} \min \bigg\{1,p-1\bigg\} L_{1} + C L_{1} o(1) + \frac{CL_{2}}{\alpha_{p}}r_{0}^{2-2s}.
\end{align}

Estimate for the term $ \text{B.3} $. Since
\begin{equation*}
\frac{1}{|z|^{n+2s}} \leq \frac{2 r_{0}^{-n-2s}}{1+|z|^{n+2s}}
\end{equation*}
for $ |z|\geq r_{0} $, and hence we obtain the following estimate for the term $ \text{B.3} $,
\begin{align}\label{the:eq16}
\begin{split}
\text{B.3} & \leq 4 r_{0}^{-n-2s} \frac{1-s}{\alpha_{p}} \int_{\mathbb{R}^{n}} \frac{|u(\overline{x}+z)-u(\overline{x})|}{1+|z|^{n+2s}} \\
& \leq C r_{0}^{-n-2s} \frac{1-s}{\alpha_{p}} ||u||_{L^{1}_{s}(\mathbb{R}^{n})} .
\end{split}
\end{align}

Noticing the estimate of $ \text{B.2} $ and $ \text{B.4} $ are similar to that of $ \text{B.1} $ and $ \text{B.3} $, respectively. Thus, we collect (\ref{the:eq15})--(\ref{the:eq16}) and the formula of $ \text{B} $ to obtain
\begin{align}\label{the:eq17}
\begin{split}
\text{B} \leq 2 \beta \bigg( -r_{0}^{2-2s} \min \bigg\{1,p-1\bigg\} L_{1} & + C L_{1} o(1) + \frac{CL_{2}}{\alpha_{p}}r_{0}^{2-2s} \bigg)   \\
&  +  2\beta C r_{0}^{-n-2s} \frac{1-s}{\alpha_{p}} ||u||_{L^{1}_{s}(\mathbb{R}^{n})} .
\end{split}
\end{align}

(2) The estimate of $\text{D}$. From (\ref{the:eq3}), we see that
\begin{equation*}
|q_{\overline{x}}|,|q_{\overline{y}}|\leq L_{1}  +2L_{2}.
\end{equation*}
Choosing $ A_{0} =3L_{1} +2L_{2}$, and using \hyperref[A6a]{\bf (A6a)}, it follows that
\begin{equation}\label{the:eq112}
\left\{
     \begin{aligned}
     &  \sigma_{1}(|q_{\overline{x}}+\xi|) + a(\overline{x}) \sigma_{2}(|q_{\overline{x}}+\xi|) \geq \sigma_{1}(|q_{\overline{x}}+\xi|)\geq \sigma_{1}(2L_{1})    ,      \\
     &  \sigma_{1}(|q_{\overline{y}}+\xi|) + a(\overline{y}) \sigma_{2}(|q_{\overline{y}}+\xi|) \geq \sigma_{1}(|q_{\overline{y}}+\xi|)\geq \sigma_{1}(2L_{1}).                 \\
     \end{aligned}
     \right.
\end{equation}
Then from (\ref{the:eq112}), we have
\begin{equation}\label{the:eq113}
\text{D} \geq -\frac{C}{\sigma_{1}(2L_{1})}||f||_{L^{\infty}(B_{1})}.
\end{equation}

Consequently, combining (\ref{the:eq17}) and (\ref{the:eq113}), we get
\begin{align*}
  2 \beta \bigg( -r_{0}^{2-2s} \min \bigg\{1,p-1\bigg\} L_{1} +& C L_{1} o(1) + \frac{CL_{2}}{\alpha_{p}}r_{0}^{2-2s} \bigg)
  +  2\beta C \epsilon_{0}^{-n-2s} \frac{1-s}{\alpha_{p}} ||u||_{L^{1}_{s}(\mathbb{R}^{n})}  \\
  & \ \ \ + \frac{C}{\sigma_{1}(2L_{1})}||f||_{L^{\infty}(B_{1})}  \geq 0 ,
\end{align*}
which is a contradiction, provided $ L_{1} $ is large enough such that $ \sigma_{1}(2L_{1}) \geq \sigma_{1}(1) \geq 1 $.

Thereby, we verify \eqref{Sec6:extreq1}, which implies that $ u $ is locally Lipschitz continuous with the estimate
\begin{equation*}
 |u(x)-u(y)| \leq C |x-y|.
\end{equation*}

Case 2: $ |\xi| \leq A_{0}  $. The procedure is very similar to case 1, we only change the formula (\ref{Sec6:eq123}) to $ h(t) = t^{\beta} $ for some $ \beta \in (0,1) $ and repeat the process to derive $ u \in C^{0,\beta}_{loc}(B_{1}) $. In this way, the proof of Lemma \ref{Sec6:lem1} has been completed.
\end{proof}
\end{Lemma}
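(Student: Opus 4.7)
The plan is to reproduce, in the non-local normalized $p$-Laplacian setting, the three-step scheme that produced Theorem \ref{thm1}, using the Hölder estimate already recorded in Lemma \ref{Sec6:lem1} as the compactness ingredient. Concretely, I will (i) promote Lemma \ref{Sec6:lem1} to a Lipschitz estimate for the perturbed equation with gradient shift $\xi$, (ii) prove an approximation lemma that identifies the limit as $s\to 1^-$ with solutions of the local normalized $p$-Laplace equation $\Delta_p^N h = 0$, and (iii) run the \emph{shored-up} iteration of Section \ref{subsec:3.3} verbatim to produce a convergent sequence of affine approximations and the desired modulus $\omega$.

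\textbf{Step 1 (Lipschitz regularity under gradient shift).} For $\xi\in\mathbb{R}^n$ arbitrary and $u$ a normalized viscosity solution of the perturbed equation $-(\sigma_1(|Du+\xi|)+a(x)\sigma_2(|Du+\xi|))\Delta_{p,N}^s u = f$, I will re-run the doubling variable argument of Lemma \ref{Sec6:lem1} but replace the Hölder test $h(t)=t^\beta$ by the concave profile $h(t)=t-\tfrac12 t^{3/2}$ used in Case 1 of that proof. The division into two cases $|\xi|\ge A_0$ and $|\xi|<A_0$ exactly as in Lemma \ref{Sec6:lem1} is still available: in the large-$\xi$ regime the lower bound $\sigma_1(|q_{\overline x}+\xi|)\ge \sigma_1(2L_1)\ge 1$ survives, while in the small-$\xi$ regime one reduces to Lemma \ref{Sec6:lem1}. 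The output is a Proposition analogous to Proposition \ref{Prop3.1}: $|u(x)-u(y)|\le C|x-y|$ on $B_{1/2}$ with $C$ independent of $\xi$.

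\textbf{Step 2 (approximation by local normalized $p$-harmonic functions).} I will argue by contradiction exactly as in Proposition \ref{prop:3.2}, producing sequences $\{\sigma_i^k\},\{u_k\},\{s_k\},\{\xi_k\},\{f_k\},\{a_k\}$ with $s_k\to 1^-$, $\|f_k\|_\infty\to 0$ and $u_k$ normalized solutions of the perturbed equation failing the approximation. Step 1 and Arzelà–Ascoli give a subsequential limit $u_\infty$ uniformly on $B_{1/4}$. The key new input is the convergence $\Delta_{p,N}^{s_k}\varphi \to \Delta_p^N\varphi$ on smooth test functions, as recorded in Bjorland–Caffarelli–Figalli \cite{CLA12} (this replaces assumption \hyperref[A5]{\bf (A5)}). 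Splitting the touching test $p_k$ as in Section \ref{The:3.2} into the case $|\xi_k|\to\infty$ (where the coefficient $\sigma_1^k(|b+\xi_k|)$ blows up and one divides by it) and the case $\xi_k\to\xi_0$ (where either $|b+\xi_0|>0$ and the non-collapsing hypothesis \hyperref[A8]{\bf (A8)} handles it, or $|b+\xi_0|=0$ and one mimics \cite[Lemma 3.5]{PDM24}), I conclude $\Delta_p^N u_\infty =0$ in the viscosity sense. The $C^{1,\alpha_0}$ regularity of $\Delta_p^N$-harmonic functions from \cite{CLA12} then provides the limiting $h$ that contradicts \eqref{se3:eq314}.

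\textbf{Step 3 (shored-up iteration and conclusion).} With Step 2 in hand, the construction of $\mu_1$, of the sequence $a_k=\sigma_1^{-1}(\theta^k)\in\ell^1$ via the Dini hypothesis \hyperref[A7]{\bf (A7)} and Remark \ref{Rk26}, of the auxiliary $(c_k)\in c_0$ from Lemma \ref{Au:prop1}, and of the shored-up moduli $\sigma_1^k(t,x)$ in \eqref{loc:eq7}, all go through unchanged because only the scaling properties of the operator are used, and $\Delta_{p,N}^s$ is homogeneous of order $2s$ and is invariant under the rescaling $u\mapsto u(r\cdot)/(\mu_k r)$ up to replacement of the exponent $\tau-1$ by $2s-1$. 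Iterating Proposition \ref{prop3.3} as in Proposition \ref{Prop:34} yields affine approximations $l_k=a_k+b_k\cdot x$ on $B_{r^k}$; the final dichotomy (the $\mu_k$-sequence either stabilizes, giving $C^{1,\alpha}$, or decays geometrically, giving $(\gamma_k)\in\ell^1$ and the modulus $\Gamma(t)=\sum_{i\ge \lfloor\ln t^{-1}\rfloor}\gamma_i$) closes the argument.

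\textbf{Main obstacle.} The delicate point is Step 2: the operator $\Delta_{p,N}^s$ is \emph{not} uniformly elliptic in the sense of \cite{LL09}, so the stability argument that passes the viscosity inequality to the limit cannot quote \cite{CC95,LL09} directly. I will rely instead on the stability theory of \cite{CLA12} for $\Delta_{p,N}^s\to\Delta_p^N$, taking special care that the touching polynomial $p_k$ is only of second order at $x_k$ and that the non-local tails in $\mathcal{I}_\delta$ are controlled using the uniform Lipschitz estimate of Step 1 and the $L^1_s(\mathbb{R}^n)$-bound on $u$. The construction of the shored-up sequence $\{\sigma_1^k\}$ and verification of the non-collapsing set $\Sigma$ via Lemma \ref{Au:lem23} is mechanical once Step 2 is established.
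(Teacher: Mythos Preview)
Your proposal does not prove the stated lemma. Lemma \ref{Sec6:lem1} asserts local H\"older continuity of normalized viscosity solutions to the $\xi$-shifted equation
\[
-\big(\sigma_1(|Du+\xi|)+a(x)\sigma_2(|Du+\xi|)\big)\Delta_{p,N}^s u=f,
\]
with a constant independent of $\xi$. What you have outlined instead is a proof of Theorem \ref{thm4} (the borderline $C^1$ result), and you explicitly \emph{invoke} Lemma \ref{Sec6:lem1} as a known input in Step 1 (``using the H\"older estimate already recorded in Lemma \ref{Sec6:lem1} as the compactness ingredient''). None of your three steps addresses the actual content of the lemma: there is no doubling-of-variables functional, no choice of the profile $h$, no splitting of the non-local operator into near- and far-field pieces, and no derivation of the contradiction that forces $M\le 0$.

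The paper's proof of Lemma \ref{Sec6:lem1} is a direct Ishii--Lions argument: one sets up the doubled functional $u(x)-u(y)-L_1 h(|x-y|)-L_2(|x-x_0|^2+|y-x_0|^2)$, applies the non-local Jensen--Ishii lemma (Lemma \ref{lem2.3}) at the putative maximum $(\overline x,\overline y)$, splits $\Delta_{p,N}^s\widetilde u(\overline x)-\Delta_{p,N}^s\widetilde u(\overline y)$ into four integrals (two local, two tail), and estimates each using the specific spectral structure of $D^2(h(|\cdot|))$ together with the kernel $\mathbb I_{[c_p,1]}$. The two cases $|\xi|\ge A_0$ versus $|\xi|<A_0$ differ only in the choice of profile ($h(t)=t-\tfrac12 t^{3/2}$ versus $h(t)=t^\beta$), and the point of the large-$\xi$ case is that $\sigma_1(|q_{\overline x}+\xi|)\ge\sigma_1(2L_1)\ge 1$ bounds the right-hand side term D from below. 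Your Step 1 alludes to this machinery but only as a sketch toward a \emph{stronger} Lipschitz estimate to feed into Steps 2--3; it does not constitute a proof of the lemma as stated.
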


The compactness stemming from the former result unlocks an approximation lemma similar to Proposition \ref{prop:3.2}. The ingredient of proof is that, for $ s $ sufficiently close to $ 1 $, $ \Delta_{p,N}^{s} \rightarrow \Delta_{p}^{N} $ and the following canceling property holds
\begin{equation*}
   - \bigg( \sigma_{1}(|Du|)  + a(x) \sigma_{2}(|Du|) \bigg) \Delta_{p}^{N} u  = 0 \ \text{in} \ B_{1}  \ \ \Rightarrow \ \ \Delta_{p}^{N} u  = 0 \ \text{in} \ B_{1},
\end{equation*}
see, for instance, \cite[ Lemma 2.6]{AE18}.

The remaining part is analogous to Section \ref{subsec:3.3}. We omit these details for the sake of simplicity. Therefore, we complete the proof of Theorem \ref{thm4}.

\section*{Acknowledgment}

The authors would like to express their sincere gratitude to the anonymous referee for his/her useful and constructive comments that greatly improved the manuscript. This work was supported by the National Natural Science Foundation of China (No. 12271093) and the Jiangsu Provincial Scientific Research Center of Applied Mathematics (Grant No. BK20233002) and the Start-up Research Fund of Southeast University (No. 4007012503).

\end{sloppypar}
\end{document}